\newtheorem{theorem}{Theorem}[section]
\newtheorem{lemma}[theorem]{Lemma}
\theoremstyle{definition}
\newtheorem{definition}[theorem]{Definition}
\newtheorem{proposition}[theorem]{Proposition}
\newtheorem{example}[theorem]{Example}
\newtheorem{remark}[theorem]{Remark}
\newtheorem{corollary}[theorem]{Corollary}
\numberwithin{equation}{section}
\begin{document}

\title[$\Sigma$-semicommutative rings and their skew PBW extensions]{$\Sigma$-semicommutative rings and their skew PBW extensions}



\author{H\'ector Su\'arez}
\address{Universidad Pedag\'ogica y Tecnol\'ogica de Colombia - Sede Tunja}
\curraddr{Campus Universitario}
\email{hector.suarez@uptc.edu.co}
\thanks{}

\author{Armando Reyes}
\address{Universidad Nacional de Colombia - Sede Bogot\'a}
\curraddr{Campus Universitario}
\email{mareyesv@unal.edu.co}
\thanks{}

\thanks{The authors were supported by the research fund of Faculty of Science, Code HERMES 52464, Universidad Nacional de Colombia - Sede Bogot\'a, Colombia.}

\subjclass[2020]{16T20, 16W20, 16S36, 16W80}

\keywords{Semicommutative ring, Baer ring, skew PBW extension, spectrum}

\date{}

\dedicatory{Dedicated to the memory of Professor V. A. Artamonov}

\begin{abstract}

In this paper, we introduce the concept of $\Sigma$-semicommutative ring, for $\Sigma$ a finite family of endomorphisms of a ring $R$. We relate this class of rings with other classes of rings such that Abelian, reduced, $\Sigma$-rigid, nil-reversible and rings satisfying the $\Sigma$-skew reflexive nilpotent property. Also, we study some ring-theoretical properties of skew PBW extensions over $\Sigma$-semicommutative rings. We prove that if a ring $R$ is $\Sigma$-semicommutative with certain conditions of compatibility on derivations, then for every skew PBW extension $A$ over $R$, $R$ is Baer if and only if $R$ is quasi-Baer, and equivalently, $A$ is quasi-Baer if and only if $A$ is Baer. Finally, we consider some topological conditions for skew PBW extensions over $\Sigma$-semicommutative rings.

\end{abstract}

\maketitle



\section{Introduction}\label{section}

Throughout this paper, every ring is associative with identity
unless otherwise stated. For a ring $R$, $N(R)$ denotes its set of
nilpotent elements, $N_{*}(R)$ is its prime radical, $N^{*}(R)$ is
its upper nilradical (i.e., the sum of all nil ideals), and $J(R)$
denotes its Jacobson radical. It is well-known that
$N_{*}(R)\subseteq N^{*}(R)\subseteq N(R)$. Due to Marks
\cite{Marks2001}, a ring $R$ is called \emph{NI} if $N^{*}(R)=N(R)$
(equivalently, $N(R)$ is an ideal of $R$). According to Birkenmeier
et al. \cite{Birkenmeier1993}, a ring $R$ is called
2-\textit{primal} if $N(R)=N_{*}(R)$ (Sun \cite{Sun1991} used the
term \emph{weakly symmetric} for these rings). A ring $R$ is called
\emph{NJ} if $J(R)=N(R)$ (Jiang et al. \cite[p. 45]{Jiang2019}). It
is clear that NJ and 2-primal rings are NI. However, in
\cite[Example 2.2]{Jiang2019} an example of a ring that is 2-primal
but not NJ is presented, and we can also find an NJ ring that it is
not 2-primal \cite[Example 1.2]{Hwang2006}. Shin \cite{Shin1973} and
Sun \cite{Sun1991} studied some topological conditions for 2-primal
rings. Hwang et al. \cite{Hwang2006} and Jiang et al.
\cite{Jiang2019} studied topological conditions for NI and NJ rings,
respectively.

\vspace{0.2cm}

Recall that a ring is \emph{reduced} if it has no nonzero nilpotent
elements. Lambek \cite{Lambek1971} called a ring $R$
\emph{symmetric} provided $abc = 0$ implies $acb = 0$ for $a, b,
c\in R$. Ouyang and Chen \cite[Definition 1]{Ouyang2010} introduced
the notion of weak symmetric ring in the following way: a ring $R$
is called a \emph{weak symmetric} ring if $abc\in N(R)$ implies
$acb\in N(R)$, for every elements $a, b, c\in R$. They proved that
this notion extends the concept of symmetric ring \cite[Proposition
2.1]{Ouyang2010}. However the converse of the assertion is false,
i.e., there exists a weak symmetric ring which is not symmetric
\cite[Example 2.2]{Ouyang2010}. Habeb \cite{Habeb1970} called a ring
$R$ \emph{zero commutative} if for $a, b\in R$, $ab = 0$ implies $ba
= 0$ (Cohn \cite{Cohn1999} used the term \emph{reversible} for what
is called zero commutative). A generalization of reversible rings is
the notion of semicommutative ring. A ring $R$ is
\emph{semicommutative} if $ab = 0$ implies $aRb = 0$ for $a, b\in R$
(Shin \cite{Shin1973} and Bell \cite{Bell1970} used the terms
\emph{SI} and \emph{Insertion-of-Factors-Property} ({\em IFP}) for
the semicommutative rings, respectively). Shin \cite[Lemma 1.1 and
1.2]{Shin1973} proved that every reduced ring is symmetric but the
converse does not hold \cite[Example II.5]{Anderson1999}, and that
$R$ is semicommutative if and only if $r_R(a)=\{s\in R\mid as=0\}$
is an ideal of $R$ if and only if any annihilator right (left) ideal
of $R$ is an ideal of $R$. He also proved that any symmetric ring is
semicommutative \cite[Proposition 1.4]{Shin1973} but the converse
does not hold \cite[Example 5.4(a)]{Shin1973}. Chaturvedi et al.
\cite{Chaturvedi2021} introduced the concept of Z-symmetric rings
and proved that semicommutative rings are Z-symmetric rings. Another
generalization of reversible rings are the reflexive rings. A ring
$R$ is called \emph{reflexive} if $aRb = 0$ implies $bRa = 0$, for
$a, b\in R$. Notice that there exists a reflexive and
semmicomutative ring which is not symmetric \cite[Examples 5 and
7]{Marks2002}. Kwak and Lee proved that a ring $R$ is reflexive and
semicommutative if and only if $R$ is reversible \cite[Proposition
2.2]{KwakLee2012}.

\vspace{0.2cm}

A ring $R$ is called \emph{Abelian} if all its idempotents are
central. Notice that semicommutative rings are Abelian \cite[Lemma
2.7]{Shin1973}, and every semicommutative ring is 2-primal
\cite[Theorem 1.5]{Shin1973}. Kaplansky \cite{Kaplansky1968}
introduced the concept of \emph{Baer ring} as rings in which the
right (left) annihilator of every nonempty subset is generated by an
idempotent. According to Clark \cite{Clark1967}, a ring $R$ is
called \emph{quasi-Baer} if the right annihilator of each right
ideal of $R$ is generated (as a right ideal) by an idempotent. These
two concepts are left-right symmetric.

\vspace{0.2cm}

Krempa \cite{Krempa1996} introduced the notion of rigidness as a
generalization of reduced rings. An endomorphism $\sigma$ of a ring
$R$ is called \textit{rigid} if for $a \in R$, $a\sigma(a) = 0$
implies $a = 0$, and a ring $R$ is called \textit{$\sigma$-rigid} if
there exists a rigid endomorphism $\sigma$ of $R$. With the aim of
extending $\sigma$-rigid rings and semicommutative rings, Ba\c{s}er
et al. \cite{Baser} introduced the notion of a
$\sigma$-semicommutative ring with the endomorphism $\sigma$ of a
ring $R$ in the following way: an endomorphism $\sigma$  of a ring
$R$ is called \emph{semicommutative} if whenever $ab = 0$ for $a,
b\in R$, $aR\sigma(b) = 0$. A ring $R$ is called
\emph{$\sigma$-semicommutative} if there exists a semicommutative
endomorphism $\sigma$ of $R$ \cite[Definition 2.1]{Baser}. They
studied characterizations of $\sigma$-semicommutative rings and
their related properties.

\vspace{0.2cm}

Regarding the objects of interest in this article, the {\em skew PBW
extensions}, these objects were defined by Gallego and Lezama
\cite{LezamaGallego} with the aim of generalizing families of
noncommutative rings as PBW extensions introduced by Bell and
Goodearl \cite{BellGoodearl1988}, skew polynomial rings (of
injective type) defined by Ore \cite{Ore1933}, and other as solvable
polynomial rings, diffusion algebras, some types of
Auslander-Gorenstein rings, some Calabi-Yau and skew Calabi-Yau
algebras, some Artin-Schelter regular algebras, some Koszul algebras
(see \cite{Fajardoetal2020} or \cite{ReyesSuarez2018-3} for a
detailed reference to each of these families). Several ring and
theoretical properties of skew PBW extensions have been investigated
by some people (e.g., Artamonov \cite{Artamonov2015}, Fajardo et al.
\cite{Fajardoetal2020}, Hamidizadeh et al.
\cite{Hamidizadehetal2020}, Hashemi et al.
\cite{HashemiKhalilAlhevaz2017},  Lezama \cite{Lezama2020}, and
Louzari et al. \cite{LouzariReyes2020}). Precisely, the authors have
studied the ring-theoretical notions mentioned above in the setting
of skew PBW extensions. For example, in \cite{ReyesSuarez2018-3}
they investigated these objects over Armendariz rings, and
considered the properties of Abelian, rigidness, Baer, quasi-Baer,
p.p., and p.q.-Baer (c.f. \cite{ReyesSuarez2019Radicals} for the
characterization of several radicals of these noncommutative rings),
while in \cite{ReyesSuarez2021Discr} they characterized skew PBW
extensions over several weak symmetric rings (a generalization of
symmetric rings), and recently, in \cite{SuarezChaconReyes2021} the
authors established necessary or sufficient conditions to guarantee
that skew PBW extensions are NI or NJ rings.

\vspace{0.2cm}

Motivated by these previous works, our aim in this paper is to
investigate the property of semicommutativity over skew PBW
extensions, and consider its relationship with other classes of
rings such that Abelian, reduced, $\Sigma$-rigid, nil-reversible and
rings satisfying the $\Sigma$-skew reflexive nilpotent property. The
important results about these topics are presented in Theorems
\ref{teo-Sigmarigidiffreduc}, \ref{teo-SimasemiciffSigmaskewRNP},
\ref{teo-compilSigma-semic}, \ref{teo-equivBaer}, and
\ref{teo-RSigmasemisiiASigmasemi}, Propositions
\ref{prop-AsemimplRSigmasem}, \ref{prop-anulad}, and Corollary
\ref{cor-relatrigid}. We also consider some topological conditions
for skew PBW extensions over $\Sigma$-semicommutative rings, and the
original results of the paper are formulated in Propositions
\ref{prop-generteo4.2Shin}, \ref{prop-generteo2.3Sun},
\ref{prop-generteo3.7Hwang}, and \ref{prop-generteo4.10Jian}. In
this way, our paper can be considered as a contribution to the study
of ring-theoretical and topological properties of families of
noncommutative rings defined by generators and relations between
them.

\vspace{0.2cm}

Throughout the paper, $\mathbb{N}$ and $\mathbb{Z}$ denote the set
of natural numbers including zero and the set of integer numbers,
respectively.

\section{Skew PBW extensions}
We start by recalling the definition of skew PBW extension
introduced by Gallego and Lezama \cite{LezamaGallego}.
\begin{definition}\cite[Definition 1]{LezamaGallego} \label{def.skewpbwextensions}
Let $R$ and $A$ be rings. We say that $A$ is a \textit{skew PBW
extension over} $R$, denoted $A=\sigma(R)\langle
x_1,\dots,x_n\rangle$, if the following conditions hold:
\begin{enumerate}
\item[\rm (i)]$R$ is a subring of $A$ sharing the same identity element.
\item[\rm (ii)] there exist finitely many elements $x_1,\dots ,x_n\in A$ such that $A$ is a left free $R$-module, with basis the set of standard monomials
\begin{center}
${\rm Mon}(A):= \{x^{\alpha}:=x_1^{\alpha_1}\cdots
x_n^{\alpha_n}\mid \alpha=(\alpha_1,\dots ,\alpha_n)\in
\mathbb{N}^n\}$.
\end{center}
Moreover, $x^0_1\cdots x^0_n := 1 \in {\rm Mon}(A)$.
\item[\rm (iii)]For each $1\leq i\leq n$ and any $r\in R\ \backslash\ \{0\}$, there exists an element $c_{i,r}\in R\ \backslash\ \{0\}$ such that $x_ir-c_{i,r}x_i\in R$.
\item[\rm (iv)]For $1\leq i,j\leq n$, there exists an element $d_{i,j}\in R\ \backslash\ \{0\}$ such that
\[
x_jx_i-d_{i,j}x_ix_j\in R+Rx_1+\cdots +Rx_n,
\]
that is, there exist elements $r_l^{(i,j)} \in R$ with
\[
x_jx_i - d_{i,j}x_ix_j = \sum_{l=0}^{n} r_l^{(i,j)}x_n\ \ \
(x_0:=1).
\]
\end{enumerate}
\end{definition}
From the definition, every non-zero element $f \in A$ can be
expressed uniquely as $f = a_0 + a_1X_1 + \cdots + a_mX_m$, with
$a_i \in R$ and $X_i \in \text{Mon}(A)$, $0 \leq i \leq m$
\cite[Remark 2]{LezamaGallego}. For $X = x^{\alpha} =
x_1^{\alpha_1}\cdots x_n^{\alpha_n} \in \text{Mon}(A)$,
$\text{deg}(X) = |\alpha| := \alpha_1 + \cdots + \alpha_n$.

\vspace{0.2cm}

The following result establishes explicitly the relation between
skew PBW extensions and skew polynomial rings introduced by Ore
\cite{Ore1933}.
\begin{proposition}\cite[Proposition 3]{LezamaGallego} \label{sigmadefinition}
If $A=\sigma(R)\langle x_1,\dots,x_n\rangle$ is a skew PBW
extension, then for each $1\leq i\leq n$ there exist an injective
endomorphism $\sigma_i:R\rightarrow R$ and a $\sigma_i$-derivation
$\delta_i:R\rightarrow R$ such that
$x_ir=\sigma_i(r)x_i+\delta_i(r)$, for every $r\in R$.
\end{proposition}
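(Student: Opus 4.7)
The plan is to read off $\sigma_i$ and $\delta_i$ directly from condition (iii) of Definition \ref{def.skewpbwextensions}, and then exploit the uniqueness of representations in the basis $\mathrm{Mon}(A)$ (condition (ii)) to upgrade these set-theoretic maps to the desired endomorphism/skew-derivation pair.

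First I would define, for each fixed $i$, the maps $\sigma_i, \delta_i \colon R \to R$ as follows. Given $r \in R \setminus \{0\}$, condition (iii) yields $c_{i,r} \in R \setminus \{0\}$ with $x_i r - c_{i,r} x_i \in R$; set $\sigma_i(r) := c_{i,r}$ and $\delta_i(r) := x_i r - c_{i,r} x_i \in R$, and extend by $\sigma_i(0) = \delta_i(0) = 0$. With this choice the identity $x_i r = \sigma_i(r) x_i + \delta_i(r)$ holds for all $r \in R$. The point is that since $\{1, x_i\} \subseteq \mathrm{Mon}(A)$ is part of a left $R$-basis of $A$, any expression $x_i r = a \cdot 1 + b \cdot x_i$ with $a,b \in R$ forces $a = \delta_i(r)$ and $b = \sigma_i(r)$ uniquely; this unique-expansion principle is what makes every subsequent argument go through.

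Next I would check the algebraic properties of $\sigma_i$ and $\delta_i$ by expanding $x_i r$ in two different ways and matching coefficients in $\mathrm{Mon}(A)$. Applying $x_i(r+s) = x_i r + x_i s$ and comparing the coefficients of $x_i$ and $1$ yields additivity of both $\sigma_i$ and $\delta_i$. Applying $x_i(rs) = (x_i r)s = (\sigma_i(r) x_i + \delta_i(r))s = \sigma_i(r)\sigma_i(s) x_i + \sigma_i(r)\delta_i(s) + \delta_i(r)s$ and matching against $x_i(rs) = \sigma_i(rs)x_i + \delta_i(rs)$ gives simultaneously $\sigma_i(rs) = \sigma_i(r)\sigma_i(s)$ and $\delta_i(rs) = \sigma_i(r)\delta_i(s) + \delta_i(r)s$, i.e.\ multiplicativity of $\sigma_i$ and the $\sigma_i$-derivation identity for $\delta_i$. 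Setting $r = 1$ in $x_i \cdot 1 = x_i$ and using uniqueness again gives $\sigma_i(1) = 1$ and $\delta_i(1) = 0$, so $\sigma_i$ is indeed a ring endomorphism.

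Finally, injectivity of $\sigma_i$ is essentially built into condition (iii): if $r \neq 0$ then $\sigma_i(r) = c_{i,r} \in R \setminus \{0\}$ by hypothesis, so $\ker \sigma_i = 0$. There is no real obstacle here; the only subtlety is ensuring at every stage that one works with the \emph{unique} decomposition afforded by the $R$-freeness of $A$ on $\mathrm{Mon}(A)$, and noting that condition (iii) is really just a reformulation of the existence of the pair $(\sigma_i, \delta_i)$ restricted to nonzero inputs, so that the verifications above collectively promote it to the full statement of the proposition.
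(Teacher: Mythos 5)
Your proof is correct and is essentially the standard argument from the cited source \cite[Proposition 3]{LezamaGallego}, which the paper itself does not reproduce: one reads $\sigma_i(r)=c_{i,r}$ and $\delta_i(r)=x_ir-c_{i,r}x_i$ off condition (iii), uses the freeness of $A$ over $R$ on ${\rm Mon}(A)$ to get uniqueness of the coefficients of $1$ and $x_i$, and then matches coefficients in $x_i(r+s)$ and $x_i(rs)$ to obtain additivity, multiplicativity, the $\sigma_i$-derivation identity, and injectivity from $c_{i,r}\neq 0$. Nothing is missing.
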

\begin{remark}\label{rem-notacionSigmaDelta}
We use the notation $\Sigma:=\{\sigma_1,\dots,\sigma_n\}$ and
$\Delta:=\{\delta_1,\dots,\delta_n\}$ for the families of
monomorphisms $\sigma_i$'s and $\sigma_i$-derivations $\delta_i$'s,
respectively, formulated in Proposition \ref{sigmadefinition}. For a
skew PBW extension $A = \sigma(R)\langle x_1,\dotsc, x_n\rangle$, we
say that $(\Sigma, \Delta)$ is a \textit{system of endomorphisms and
$\Sigma$-derivations} of $R$ with respect to $A$. For $\alpha =
(\alpha_1, \dots , \alpha_n) \in \mathbb{N}^n$, $\sigma^{\alpha}:=
\sigma_1^{\alpha_1}\circ \cdots \circ \sigma_n^{\alpha_n}$,
$\delta^{\alpha} := \delta_1^{\alpha_1} \circ \cdots \circ
\delta_n^{\alpha_n}$, where $\circ$ denotes the classical
composition of functions. The zero element of $\mathbb{N}^n$ is
$0:=(0,\dots,0)$.
\end{remark}
\begin{definition} \cite[Definition 4]{LezamaGallego} \label{quasicommutative}
Let $A=\sigma(R)\langle x_1,\dots,x_n\rangle$ be a skew PBW
extension over $R$.
\begin{itemize}
    \item[{\rm (i)}] $A$ is called {\it quasi-commutative} if the conditions (iii) and (iv) presented in the Definition \ref{def.skewpbwextensions} are replaced by the following:
\begin{enumerate}
    \item[(iii')] For every $1 \leq i \leq n$ and $r \in R \setminus \left \{0 \right \}$ there exists $c_{i,r} \in R \setminus \left \{0 \right \}$ such that $x_ir = c_{i,r}x_i$.
\item[(iv')] For every $1 \leq i, j \leq n$ there exists $d_{i,j} \in R \setminus \left \{0 \right \}$ such that $x_jx_i = d_{i,j} x_ix_j$.
\end{enumerate}
    \item[(ii)] $A$ is said to be {\it bijective} if  $\sigma_i$ is bijective for each $1 \leq i \leq n$, and $d_{i,j}$ is invertible for any $1 \leq i <j \leq n$.
\end{itemize}
\end{definition}
If $\sigma_i$ is the identity map of $R$ for each $i = 1, \dotsc,
n$, (we write $\sigma_i = {\rm id}_R$), we say that $A$ is a skew
PBW extension of \textit{derivation type}. Similarly, if $\delta_i =
0$ for every $i$, then $A$ is called a skew PBW extension of
\textit{endomorphism type} \cite[Definition 2.3]{LezamaAcostaReyes}.
\begin{example}
A great variety of algebras can be expressed as skew PBW extensions.
For example, PBW extensions defined by Bell and Goodearl
\cite{BellGoodearl1988} (which include families of noncommutative
rings as enveloping algebras of finite dimensional Lie algebras and
differential operators), Weyl algebras, skew polynomial rings of
injective type, some types of Auslander-Gorenstein rings, some skew
Calabi-Yau algebras, examples of quantum polynomials, some quantum
universal enveloping algebras, and other algebras of great interest
in noncommutative algebraic geometry and noncommutative differential
geometry such as Auslander-regular algebras. For more details about
these examples, see \cite{Fajardoetal2020, GomezSuarez2019,
Lezama2020, Lezama2021, LezamaReyes2014, LouzariReyes2020,
Suarez2017, SuarezLezamaReyes2017}.
\end{example}
The following result is useful when one need some computations with
elements of skew PBW extensions.
\begin{remark}\cite[Remark 6.1.8
(iii)]{Fajardoetal2020}\label{remark-remark2.8} If
$A=\sigma(R)\langle
    x_1,\dots,x_n\rangle$ is a skew PBW extension over $R$, $r$ is an element of $R$, and $\alpha=(\alpha_1,\dotsc, \alpha_n)\in \mathbb{N}^{n}$, then
        {\small{\begin{align*}
                x^{\alpha}r = &\ x_1^{\alpha_1}x_2^{\alpha_2}\dotsb x_{n-1}^{\alpha_{n-1}}x_n^{\alpha_n}r = x_1^{\alpha_1}\dotsb x_{n-1}^{\alpha_{n-1}}\biggl(\sum_{j=1}^{\alpha_n}x_n^{\alpha_{n}-j}\delta_n(\sigma_n^{j-1}(r))x_n^{j-1}\biggr)\\
                + &\ x_1^{\alpha_1}\dotsb x_{n-2}^{\alpha_{n-2}}\biggl(\sum_{j=1}^{\alpha_{n-1}}x_{n-1}^{\alpha_{n-1}-j}\delta_{n-1}(\sigma_{n-1}^{j-1}(\sigma_n^{\alpha_n}(r)))x_{n-1}^{j-1}\biggr)x_n^{\alpha_n}\\
                + &\ x_1^{\alpha_1}\dotsb x_{n-3}^{\alpha_{n-3}}\biggl(\sum_{j=1}^{\alpha_{n-2}} x_{n-2}^{\alpha_{n-2}-j}\delta_{n-2}(\sigma_{n-2}^{j-1}(\sigma_{n-1}^{\alpha_{n-1}}(\sigma_n^{\alpha_n}(r))))x_{n-2}^{j-1}\biggr)x_{n-1}^{\alpha_{n-1}}x_n^{\alpha_n} + \dotsb\\
                + &\ x_1^{\alpha_1}\biggl( \sum_{j=1}^{\alpha_2}x_2^{\alpha_2-j}\delta_2(\sigma_2^{j-1}(\sigma_3^{\alpha_3}(\sigma_4^{\alpha_4}(\dotsb (\sigma_n^{\alpha_n}(r))))))x_2^{j-1}\biggr)x_3^{\alpha_3}x_4^{\alpha_4}\dotsb x_{n-1}^{\alpha_{n-1}}x_n^{\alpha_n} \\
                + &\ \sigma_1^{\alpha_1}(\sigma_2^{\alpha_2}(\dotsb
                (\sigma_n^{\alpha_n}(r))))x_1^{\alpha_1}\dotsb x_n^{\alpha_n}, \ \ \
                \ \ \ \ \ \ \ \sigma_j^{0}:={\rm id}_R\ \ {\rm for}\ \ 1\le j\le n.
                \end{align*}}}
\end{remark}
For the purposes of the paper, we need to establish a criterion
which allows us to extend the family $\Sigma$ of injective
endomorphisms and the family of $\Sigma$-derivations $\Delta$ of the
ring $R$ to the skew PBW extension $A$. For the next result,
consider $\Sigma$ and $\Delta$ as in Remark
\ref{rem-notacionSigmaDelta}.
\begin{proposition}\cite[Theorem 5.1]{ReyesSuarez2018-3}\label{prop-indicsigma}
Let $A = \sigma(R)\langle x_1,\dotsc, x_n\rangle$ be a skew PBW
extension over $R$. Suppose that $\sigma_i\delta_j=\delta_j\sigma_i,
\delta_i\delta_j=\delta_j\delta_i$, and $\delta_k(d_{i,j}) =
\delta_k(r_l^{(i,j)}) = 0$, for $1\le i, j, k, l\le n$, where
$d_{i,j}$ and $r_l^{(i,j)}$ are the elements established in
Definition \ref{def.skewpbwextensions}. If
$\overline{\sigma_{k}}:A\to A$ and $\overline{\delta_k}:A\to A$ are
the functions given by
$\overline{\sigma_{k}}(f):=\sigma_k(a_0)+\sigma_k(a_1)X_1 + \dotsb +
\sigma_k(a_m)X_m$ and $\overline{\delta_k}(f):=\delta_k(a_0) +
\delta_k(a_1)X_1 + \dotsb + \delta_k(a_m)X_m$, for every $f=a_0 +
a_1X_1+\dotsb + a_mX_m\in A$, respectively, and
$\overline{\sigma_k}(r):=\sigma_k(r)$ and $\overline{\delta_k}(r) =
\delta_k(r)$, for every $1\le k\le n$ and $r\in R$, then
$\overline{\sigma_k}$ is an injective endomorphism of $A$ and
$\overline{\delta_k}$ is a $\overline{\sigma_k}$-derivation of $A$,
for each $k$.
\end{proposition}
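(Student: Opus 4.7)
The plan is to verify, in turn, that $\overline{\sigma_k}$ and $\overline{\delta_k}$ are well-defined, additive, compatible with multiplication in the appropriate sense, and that $\overline{\sigma_k}$ is injective. Well-definedness and additivity are immediate: by condition (ii) of Definition \ref{def.skewpbwextensions}, every element of $A$ has a unique expansion $f=\sum_i a_i X_i$ with $a_i\in R$ and $X_i\in \mathrm{Mon}(A)$, and the prescriptions $\overline{\sigma_k}(f):=\sum_i\sigma_k(a_i) X_i$ and $\overline{\delta_k}(f):=\sum_i\delta_k(a_i) X_i$ act coefficient-wise, inheriting additivity directly from $\sigma_k$ and $\delta_k$.

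The central computation is to check $\overline{\sigma_k}(fg)=\overline{\sigma_k}(f)\overline{\sigma_k}(g)$ and the Leibniz rule $\overline{\delta_k}(fg)=\overline{\delta_k}(f)\,g+\overline{\sigma_k}(f)\overline{\delta_k}(g)$. By additivity it is enough to test them on monomial inputs $f=ax^{\alpha}$ and $g=bx^{\beta}$. The main tool is Remark \ref{remark-remark2.8}, which expresses $x^{\alpha}b$ as a finite sum of terms $h\,x^{\gamma}$ where each $h\in R$ is an iterated composition of $\sigma_i$'s and $\delta_j$'s applied to $b$. For the multiplicativity of $\overline{\sigma_k}$ one applies $\sigma_k$ to each such $h$ and pulls it inside the composition using the hypothesis $\sigma_i\delta_j=\delta_j\sigma_i$ (and the commutation among the $\sigma_i$'s available in this setting), recovering precisely the expansion of $x^{\alpha}\sigma_k(b)=\overline{\sigma_k}(x^{\alpha})\overline{\sigma_k}(b)$. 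For the Leibniz identity one applies $\delta_k$ to each $h$: the hypotheses $\sigma_i\delta_j=\delta_j\sigma_i$ and $\delta_i\delta_j=\delta_j\delta_i$ allow $\delta_k$ to be commuted through every composition appearing in the expansion so as to land on the single occurrence of $b$, after which the usual Leibniz rule for $\delta_k$ on $R$ assembles the required right-hand side. When both $\alpha$ and $\beta$ are nonzero, one must first rewrite $x^{\alpha}x^{\beta}$ as a linear combination of standard monomials via condition (iv) of Definition \ref{def.skewpbwextensions}; the hypotheses $\delta_k(d_{i,j})=\delta_k(r_l^{(i,j)})=0$ are precisely what is needed to guarantee that $\overline{\delta_k}$ (and $\overline{\sigma_k}$) pass through this rearrangement without producing spurious terms coming from the structure constants.

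Injectivity of $\overline{\sigma_k}$ is then automatic: if $\overline{\sigma_k}(f)=\sum_i\sigma_k(a_i) X_i=0$, uniqueness of the expansion forces $\sigma_k(a_i)=0$ for every $i$, and the injectivity of $\sigma_k$ on $R$ (guaranteed by Proposition \ref{sigmadefinition}) yields $a_i=0$, hence $f=0$. I expect the chief obstacle to be the combinatorial bookkeeping in the multiplicativity and Leibniz steps, since the expansion in Remark \ref{remark-remark2.8} is already a multi-indexed sum and the reduction of $x^{\alpha}x^{\beta}$ to standard form compounds this complexity. A clean way to organize the argument is an induction on $|\alpha|+|\beta|$, reducing to the base cases $x_ir=\sigma_i(r)x_i+\delta_i(r)$ and $x_jx_i=d_{i,j}x_ix_j+\sum_l r_l^{(i,j)}x_l$, where the commutation hypotheses on $\Sigma,\Delta$ and the annihilation of the $d_{i,j}$ and $r_l^{(i,j)}$ by each $\delta_k$ verify the identities directly; the inductive step then propagates them to arbitrary degree.
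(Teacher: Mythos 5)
The paper does not actually prove this proposition; it imports it verbatim from \cite[Theorem 5.1]{ReyesSuarez2018-3}, so there is no in-paper argument to compare against. Your overall strategy --- coefficient-wise definition on the free $R$-basis ${\rm Mon}(A)$, reduction of multiplicativity and of the Leibniz rule to the defining relations $x_ir=\sigma_i(r)x_i+\delta_i(r)$ and $x_jx_i=d_{i,j}x_ix_j+\sum_l r_l^{(i,j)}x_l$ via Remark \ref{remark-remark2.8} and induction on degree, and injectivity from uniqueness of the standard expansion --- is the natural and correct one, and your treatment of $\overline{\delta_k}$ is sound: on $x_ir$ the Leibniz identity reduces exactly to $\delta_k\sigma_i=\sigma_i\delta_k$ and $\delta_k\delta_i=\delta_i\delta_k$, and on $x_jx_i$ it reduces exactly to $\delta_k(d_{i,j})=\delta_k(r_l^{(i,j)})=0$.

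The gap is in the multiplicativity of $\overline{\sigma_k}$, precisely at the two base cases of your induction. Comparing coefficients in $\overline{\sigma_k}(x_ir)=\overline{\sigma_k}(x_i)\overline{\sigma_k}(r)$, that is, in $\sigma_k(\sigma_i(r))x_i+\sigma_k(\delta_i(r))=\sigma_i(\sigma_k(r))x_i+\delta_i(\sigma_k(r))$, requires $\sigma_k\sigma_i=\sigma_i\sigma_k$ in addition to the stated $\sigma_k\delta_i=\delta_i\sigma_k$. You invoke this as ``commutation among the $\sigma_i$'s available in this setting,'' but it is neither among the hypotheses of the proposition nor automatic for a skew PBW extension, so it must either be proved or added as a hypothesis. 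Likewise, comparing coefficients in $\overline{\sigma_k}(x_jx_i)=\overline{\sigma_k}(x_j)\overline{\sigma_k}(x_i)=x_jx_i$ forces $\sigma_k(d_{i,j})=d_{i,j}$ and $\sigma_k(r_l^{(i,j)})=r_l^{(i,j)}$; your claim that the conditions $\delta_k(d_{i,j})=\delta_k(r_l^{(i,j)})=0$ are ``precisely what is needed'' for $\overline{\sigma_k}$ to pass through this rewriting is not correct, since those conditions constrain only $\delta_k$ and say nothing about the action of $\sigma_k$ on the structure constants. Both missing facts are genuinely needed (by uniqueness of the standard-monomial expansion the identities fail without them), so as written the argument does not establish that $\overline{\sigma_k}$ is an endomorphism. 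The rest of the proposal (additivity, the inductive propagation once the base cases hold, and the injectivity argument) is fine.
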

As we said in the Introduction, Krempa \cite{Krempa1996} introduced
the notion of rigidness as a generalization of reduced rings. The
second author in \cite[Definition 3.2]{Reyes2015} generalized the
notion of $\sigma$-rigidness to a finite family of endomorphisms of
a ring $R$. Let $R$ be a ring, $\Sigma =
\{\sigma_1,\dots,\sigma_n\}$ a family of endomorphisms of $R$ and
for $\alpha = (\alpha_1, \dotsc, \alpha_n) \in \mathbb{N}^n$,
$\sigma^{\alpha}:= \sigma_1^{\alpha_1}\circ \dotsc \circ
\sigma_n^{\alpha_n}$. $\Sigma$ is called a \emph{rigid endomorphisms
family of} $R$ if $a\sigma^{\alpha}(a)= 0$ implies $a = 0$, for
every $a\in R$ and $\alpha\in \mathbb{N}^n$. $R$ is called
$\Sigma$-\emph{rigid} if there exists a rigid endomorphisms family
$\Sigma$ of $R$. Now, if $\sigma$ is an endomorphism of $R$ and
$\delta$ is a $\sigma$-derivation of $R$, Annin \cite{Annin2004}
called $R$ a $\sigma$-{\em compatible} ring if for each $a, b\in R$,
$ab = 0$ if and only if $a\sigma(b)=0$ (necessarily, the
endomorphism $\sigma$ is injective). Notice that the
$\sigma$-compatible ring is a generalization of $\sigma$-rigid ring
to the more general case where $R$ is not assumed to be reduced (for
more details, see \cite{HashemiMoussavi2005}). $R$ is said to be
$\delta$-{\em compatible} if for each $a, b\in R$, $ab = 0$ implies
$a\delta(b)=0$. If $R$ is both $\sigma$-compatible and
$\delta$-compatible, then $R$ is called ($\sigma,\delta$)-{\em
compatible}. Notice that compatible rings are strictly more general
than rigid rings

\vspace{0.2cm}

Hashemi et al. \cite{HashemiKhalilAlhevaz2017} and the authors in
\cite {ReyesSuarez2018RUMA} and introduced independently the
$(\Sigma, \Delta)$-compatible rings which are a natural
generalization of $(\sigma, \delta)$-compatible rings. As expected
$(\Sigma, \Delta)$-compatible rings extend $\Sigma$-rigid rings.
Some examples and results of these compatible rings can be found in
\cite{Fajardoetal2020, HashemiKhalilAlhevaz2017, LouzariReyes2020,
ReyesSuarez2018RUMA, ReyesSuarez2019-2}.
\begin{definition}
Consider a ring $R$ with a finite family of endomorphisms $\Sigma$
and a finite family of $\Sigma$-derivations $\Delta$.
\begin{enumerate}
\item [\rm (i)] \cite[Definition 3.1]{HashemiKhalilAlhevaz2017} or \cite[Definition 3.2]{ReyesSuarez2018RUMA} $R$ is said to be {\it $\Sigma$-compatible} if for each $a, b \in R$, $a\sigma^{\alpha}(b) = 0$ if and only if $ab = 0$, where $\alpha \in \mathbb{N}^n$. In this case, we say that $\Sigma$ is a {\it compatible family} of endomorphisms of $R$.
\item [\rm (ii)] \cite[Definition 3.1]{HashemiKhalilAlhevaz2017} or \cite[Definition 3.2]{ReyesSuarez2018RUMA} $R$ is said to be {\it $\Delta$-compatible} if for each $a, b \in R$, it follows that $ab = 0$ implies $a\delta^{\beta}(b)=0$, where $\beta \in \mathbb{N}^n$.
\item [\rm (iii)] \cite[Definition 3.1]{HashemiKhalilAlhevaz2017} or \cite[Definition 3.2]{ReyesSuarez2018RUMA} If $R$ is both $\Sigma$-compatible and $\Delta$-compatible, then $R$ is called {\it  $(\Sigma, \Delta)$-compatible}.
\item [\rm (iv)] \cite[Definition 4.1]{ReyesSuarez2019-2} $R$ is said to be {\it weak $\Sigma$-compatible} if for each $a, b \in R$, $a\sigma^{\alpha}(b)\in N(R)$ if and only if $ab \in N(R)$, where $\alpha \in \mathbb{N}^n$. $R$ is said to be {\it weak $\Delta$-compatible} if for each $a, b \in R$,  $ab \in N(R)$ implies $a\delta^{\beta}(b)\in N(R)$, where $\beta \in \mathbb{N}^n$. If $R$ is both weak $\Sigma$-compatible and weak $\Delta$-compatible, then $R$ is called {\it weak $(\Sigma, \Delta)$-compatible}.
\end{enumerate}
\end{definition}
\begin{remark}\label{remark-comsiiSigma}
The composition of compatible endomorphisms is compatible. Indeed,
if $\sigma_1$ and $\sigma_2$ are compatible endomorphisms of $R$,
then for $a,b\in R$, $a\sigma_1\sigma_2(b)=
a(\sigma_1(\sigma_2(b))=0$ if and only if $a\sigma_2(b)=0$ if and
only if $ab=0$. By induction we obtain that the composition of a
finite family of compatible endomorphisms of a ring $R$ is
compatible. In this way, if $\Sigma=\{\sigma_1,\dots, \sigma_n\}$ is
a finite family of endomorphism of a ring $R$, then $R$ is
$\Sigma$-compatible if and only if $R$ is $\sigma_i$-compatible for
$1\leq i\leq n$. Moreover, $R$ is $\Sigma$-compatible if and only if
for $a,b\in R$, $\sigma^{\alpha}(a)b=0\Leftrightarrow ab=0$, for
each $\alpha\in \mathbb{N}^n$. Suppose that $\Sigma$ is a compatible
family of endomorphisms of $R$. By Remark \ref{remark-comsiiSigma}
we have that each $\sigma_k\in \Sigma$ is compatible and that
$\sigma^{\alpha}$ is compatible. By \cite[Lemma 2.1]{Alhevaz2012} we
have that $\sigma^{\alpha}(a)b=0\Leftrightarrow ab=0$. On the other
hand, if $\sigma^{\alpha}(a)b=0\Leftrightarrow ab=0$, for $\alpha\in
\mathbb{N}^n$, then  for $\alpha=(\alpha_1,\dots,\alpha_n)\in
\mathbb{N}^n$, where $\alpha_i=0$ if $i\neq k$ and $\alpha_k=1$, we
have that $\sigma^{\alpha}= \sigma_k\in \Sigma$, that is, $\sigma_k$
is compatible \cite[Lemma 2.1]{Alhevaz2012}. Remark
\ref{remark-comsiiSigma} shows that $\Sigma$ is compatible, and
therefore $R$ is $\Sigma$-compatible.
\end{remark}
\section{$\Sigma$-semicommutative rings}\label{sect-semicommt}
In this section, we define $\Sigma$-semicommutative rings as a
natural extension of $\sigma$-semicommutative rings introduced by
Ba\c{s}er et al. \cite{Baser}. We also consider some properties of
these rings which are used in later sections.
\begin{definition}\label{def-sigma-semicomm}
Let $\Sigma=\{\sigma_1,\dots, \sigma_n\}$ be a finite family of
endomorphisms of a ring $R$. A ring $R$ is called
$\Sigma$-\emph{semicommutative} if whenever $ab = 0$ for $a, b\in
R$, $aR\sigma^{\alpha}(b) = 0$, for  every $\alpha\in
\mathbb{N}^n\setminus \{0\}$. A ring $R$ is called
\emph{$\Sigma$-semicommutative} if there exists a semicommutative
family of endomorphism $\Sigma$ of $R$.
\end{definition}
It is clear that a semicommutative ring $R$ is
$\Sigma$-semicommutative, for $\Sigma=\{{\rm id}_R\}$, where ${\rm
id}_R$ is the identity endomorphism of $R$. Notice that if $R$ is
$\Sigma$-semicommutative and $S$ is a subring of $R$ with
$\sigma^{\alpha}(S)\subseteq S$, for each $\alpha\in
\mathbb{N}^{n}\setminus \{0\}$, then $S$ is also
$\Sigma$-semicommutative.

\vspace{0.2cm}

The following result was to be expected based on our motivation to
define the $\Sigma$-semicommutative rings.
\begin{proposition}\label{prop-Sigmasemiciifeach}
Let $\Sigma=\{\sigma_1,\dots,\sigma_n\}$ be a finite family of
endomorphisms of a ring $R$. Then $R$ is $\Sigma$-semicommutative if
and only if $R$ is $\sigma_i$-semicommutative, for $1\leq i\leq n$.
\end{proposition}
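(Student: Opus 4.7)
The forward direction is immediate from the definitions: assuming $R$ is $\Sigma$-semicommutative, fix $i$ and consider the multi-index $\alpha = e_i = (0,\dotsc,1,\dotsc,0) \in \mathbb{N}^n\setminus\{0\}$ with a $1$ in position $i$. Then $\sigma^{\alpha} = \sigma_i$, so $ab=0$ forces $aR\sigma_i(b) = aR\sigma^{\alpha}(b) = 0$, which is precisely the definition of $\sigma_i$-semicommutativity.

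For the converse, assume $R$ is $\sigma_i$-semicommutative for every $i$, take $\alpha = (\alpha_1,\dotsc,\alpha_n) \in \mathbb{N}^n \setminus \{0\}$, and suppose $ab = 0$. The plan is to peel off the factors of $\sigma^{\alpha} = \sigma_1^{\alpha_1}\circ \dotsb \circ \sigma_n^{\alpha_n}$ one at a time, from the innermost outward, using each $\sigma_i$-semicommutativity hypothesis separately. First I would show by induction on $k\ge 0$ that $aR\sigma_n^{k}(b) = 0$: the case $k=0$ follows from $ab=0$ and the trivial fact that $aRb \subseteq aR\sigma_n(b) $ is not what we need---rather, setting $r=1$ in the previous step gives $a\sigma_n^{k-1}(b) = 0$, so $\sigma_n$-semicommutativity applied to this zero product yields $aR\sigma_n(\sigma_n^{k-1}(b)) = aR\sigma_n^{k}(b) = 0$. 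In particular $a\sigma_n^{\alpha_n}(b) = 0$.

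Now I would iterate: from $a\sigma_n^{\alpha_n}(b) = 0$, the same induction with $\sigma_{n-1}$ in place of $\sigma_n$ and $\sigma_n^{\alpha_n}(b)$ in place of $b$ produces $aR\sigma_{n-1}^{\alpha_{n-1}}(\sigma_n^{\alpha_n}(b)) = 0$; in particular its $r=1$ specialization kills $\sigma_{n-1}^{\alpha_{n-1}}\circ\sigma_n^{\alpha_n}$ applied to $b$. Continuing this process for $i = n-2, n-3, \dotsc, 1$, after $n$ rounds I obtain
\[
aR\,\sigma_1^{\alpha_1}(\sigma_2^{\alpha_2}(\dotsb (\sigma_n^{\alpha_n}(b))\dotsb )) = aR\sigma^{\alpha}(b) = 0,
\]
which is the desired conclusion. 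The argument is essentially bookkeeping, so the only place care is required is in distinguishing the statement that a single product vanishes (used to set up the next induction step) from the stronger sandwiched vanishing $aRc=0$ (which is what each application of $\sigma_i$-semicommutativity actually delivers); there is no real obstacle, since $1\in R$ lets us convert the latter into the former at will.
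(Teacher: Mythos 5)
Your argument is correct and follows essentially the same route as the paper's: the paper proves that a composite of semicommutative endomorphisms is again semicommutative and then inducts, which is exactly your peel-one-factor-at-a-time scheme (using $r=1$ to pass from $aRc=0$ to $ac=0$ before each new application of $\sigma_i$-semicommutativity), and the forward direction is the same specialization $\alpha=e_i$. The only blemish is the garbled base case: the claim $aR\sigma_n^{k}(b)=0$ is not asserted (and need not hold) at $k=0$, so the induction should be stated for $k\geq 1$ with $ab=0$ as the seed --- which is what your argument in effect does.
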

\begin{proof}
Suppose that $R$ is $\sigma_i$-semicommutative for each $i$. Notice
that composition of semicommutative endomorphisms is
semicommutative. Suppose that $\sigma_i$ and $\sigma_j$ are two
semicommutative endomorphisms of a ring $R$. Then $ab=0$, for
$a,b\in R$, implies $a\sigma_j(b)=0$, whence
$aR\sigma_i\sigma_j(b)=0$, and so $\sigma_i\sigma_j$ is
semicommutative. Since $R$ is $\sigma_i$-semicommutative, we get
that $R$ is $\sigma_i^2$-semicommutative. By induction it follows
that that $R$ is $\sigma_i^{\alpha_i}$-semicommutative for any
positive integer $\alpha_i$. Hence $R$ is
$\sigma^{\alpha}$-semicommutative for every
$\sigma^{\alpha}=\sigma_1^{\alpha_1}\dots \sigma_n^{\alpha_n}$,
where $(\alpha_1,\dotsc, \alpha_n)=\alpha\in \mathbb{N}^n\setminus
\{0\}$, i.e., $R$ is $\Sigma$-semicommutative.

For the  converse, if $R$ is $\Sigma$-semicommutative, then for
$a,b\in R$, $ab=0$ implies  $aR\sigma^{\alpha}(b)=0$, for every
$\alpha\in \mathbb{N}^n\setminus \{0\}$. In particular, when
$\sigma^{\alpha}=\sigma_i$, i.e., the $i$-th entry of $\alpha$ is 1
and zero otherwise, it follows that $aR\sigma_i(b)=0$, and therefore
$R$ is $\sigma_i$-semicommutative, for every $i$.
\end{proof}
The following is an example of a $\Sigma$-semicommutative ring.
\begin{example}\label{exampleS2}
Consider the ring
\begin{center}
    $R= \left \{ \begin{pmatrix}a & b\\ 0 & a \end{pmatrix} \mid a,b \in \mathbb{Z} \right \}$.
\end{center}
Let $\Sigma=\{\sigma_1,\sigma_2,\sigma_3\}$ be a family of
endomorphisms or $R$, where $\sigma_1 = {\rm id}_R$ is the identity
endomorphism of $R$, and $\sigma_2,\sigma_3$ defined as
\begin{center}
    $\sigma_2\left ( \begin{pmatrix}a & b\\ 0 & a \end{pmatrix} \right )= \begin{pmatrix}a & -b\\0 & a\end{pmatrix}, \ \ \ \ \ \ \sigma_3\left ( \begin{pmatrix}a & b\\ 0 & a \end{pmatrix} \right )= \begin{pmatrix}a & 0\\0 &
    a\end{pmatrix}$.
\end{center}
Notice that $\sigma_1$ and $\sigma_2$ are injective but $\sigma_3$
is not injective.

Let $AB = 0$ with
\begin{center}
    $A=\begin{pmatrix} a & a'\\ 0& a\end{pmatrix}\ \ \text{and}\ \ B=\begin{pmatrix} b & b'\\ 0 & b \end{pmatrix}.$
\end{center}
Then $ab = 0$ and $ab' + ba' = 0$. If $a=0$, then $a'b=0$, and so
$a'=0$ or $b=0$. If $b=0$, then $ab'=0$, and so $a=0$ or $b'=0$.
Now, let $U=\begin{pmatrix} r & r'\\ 0& r\end{pmatrix}$. In this
way,
\begin{center}
    $AU\sigma_1(B)=AUB=\begin{pmatrix} arb & arb' + ar'b + a'rb\\ 0& arb\end{pmatrix}=\begin{pmatrix} 0 & (ab' + a'b)r\\ 0& 0\end{pmatrix}=
    \begin{pmatrix} 0 & 0\\ 0& 0\end{pmatrix},$
\end{center}
since $ab = 0$ and $ab' + ba' = 0$. Therefore $R$ is
$\sigma_1$-semicommutative and so semicommutative. For $\sigma_2$,
\begin{align*}
    AU\sigma_2(B) = &\ \begin{pmatrix} a & a'\\ 0& a\end{pmatrix}\begin{pmatrix} r & r'\\ 0& r\end{pmatrix}\begin{pmatrix} b & -b'\\ 0 & b
    \end{pmatrix} \\
    = &\ \begin{pmatrix} arb & -arb' + ar'b - a'rb\\ 0& arb\end{pmatrix} \\
     = &\ \begin{pmatrix} 0 & -(ab' + a'b)r \\ 0 & 0\end{pmatrix} \\
     = &\ \begin{pmatrix} 0 & 0\\ 0& 0\end{pmatrix},
\end{align*}
which shows that $R$ is $\sigma_2$-semicommutative.

Finally,
\begin{center}
    $AU\sigma_3(B)=\begin{pmatrix} a & a'\\ 0& a\end{pmatrix}\begin{pmatrix} r & r'\\ 0& r\end{pmatrix}\begin{pmatrix} b & 0\\ 0 & b
    \end{pmatrix}= \begin{pmatrix} arb &  ar'b + a'rb\\ 0& arb\end{pmatrix}=\begin{pmatrix} 0 & a'rb\\ 0& 0\end{pmatrix}.$
\end{center}
If $a'rb\neq 0$, then $a'\neq 0$, $r\neq 0$ and $b\neq 0$. Since
$ab=0$, then $a=0$ and so $a'b=0$, because $ab' + ba' = 0$. As
$a'\neq 0$, then $b=0$, which is contradictory. Thus $a'rb= 0$ and
therefore $AU\sigma_3(B)=0$, which  implies that $R$ is
$\sigma_3$-semicommutative. From Proposition
\ref{prop-Sigmasemiciifeach} it follows that $R$ is a
$\Sigma$-semicommutative ring.
\end{example}
The following is an example of a ring that is semicommutative but
not $\Sigma$-semicommutative.
\begin{example}\label{ex-ex2Irano}
Consider the ring $R = \mathbb{K}[s, t]/\langle st\rangle$, where
$\mathbb{K}[s, t]$ is the commutative polynomial ring, and let
$\langle st\rangle$ be the ideal of $\mathbb{K}[s, t]$ generated by
$st$. Let $\overline{s}=s+\langle st\rangle$,
$\overline{t}=t+\langle st\rangle$ in $R$, and let $\sigma_1: R\to
R$ defined by $\sigma(\overline{s})= \overline{t}$,
$\sigma(\overline{t}) = \overline{s}$, and $\sigma_2: R \to R$ the
identity map. Notice that $R$ is semicommutative, since $R$ is
commutative. However, $R$ is not $\sigma_1$-semicommutative:
$\overline{s}\overline{t}=0$ but
$\overline{s}\sigma_1(\overline{t})=\overline{s}\overline{s}\neq 0$.
By Proposition \ref{prop-Sigmasemiciifeach}, $R$ is not
$\Sigma$-semicommutative for $\Sigma=\{\sigma_1,\sigma_2\}$.
\end{example}
The following is an example of a ring that is
$\Sigma$-semicommutative but not semicommutative.
\begin{example}\label{ex-Sigmasemicnosem} For the ring
\begin{center}
    $R= \left \{ \begin{pmatrix}a & b\\ 0 & c \end{pmatrix} \mid a,b, c \in \mathbb{Z} \right \}$,
\end{center}
and the endomorphism $\sigma$ of $R$ given by $\sigma\left
(\begin{pmatrix}a & b\\ 0 & c \end{pmatrix}\right )=\begin{pmatrix}a
& 0\\ 0 & 0
    \end{pmatrix}$, from \cite[Example 2.7]{Baser} we know that $R$ is $\Sigma$-semicommutative for $\Sigma=\{\sigma\}$. $R$ is not semicommutative, since for example $\begin{pmatrix}1 & 0\\ 0 & 0 \end{pmatrix}\begin{pmatrix}0 & 0\\ 0 & 1
    \end{pmatrix}=0$, but $\begin{pmatrix}1 & 0\\ 0 & 0 \end{pmatrix}\begin{pmatrix}1 & 1\\ 0 & 1 \end{pmatrix}\begin{pmatrix}0 & 0\\ 0 & 1 \end{pmatrix}= \begin{pmatrix}0 & 1\\ 0 & 0
    \end{pmatrix}\neq 0$.
\end{example}
The following example shows that a ring $R$ is neither
semicommutative nor $\Sigma$-semicommutative.
\begin{example}\label{ex-NoSigmasemicnosem} Let $R$ and $\sigma$ be as in Example \ref{ex-Sigmasemicnosem} and consider the endomorphism $\phi$ of $R$ given by $\phi\left (\begin{pmatrix}a & b\\ 0 & c \end{pmatrix}\right)=\begin{pmatrix}0 & 0\\ 0 &
    c \end{pmatrix}$. Notice that $R$ is $\Sigma$-semicommutative for $\Sigma=\{\sigma\}$, and $\begin{pmatrix}1 & 0\\ 0 & 0 \end{pmatrix}\begin{pmatrix}0 & 0\\ 0 & 1
    \end{pmatrix}=0$, but $\begin{pmatrix}1 & 0\\ 0 & 0 \end{pmatrix}\begin{pmatrix}1 & 1\\ 0 & 0 \end{pmatrix}\phi\left(\begin{pmatrix}0 & 0\\ 0 & 1
    \end{pmatrix}\right)= \begin{pmatrix}0 & 1\\ 0 & 0
    \end{pmatrix}\neq 0$, i.e., $R$ is not $\phi$-semicommutative. Proposition \ref{prop-Sigmasemiciifeach} shows that
    $R$ is not $\Sigma$-semicommutative, for $\Sigma=\{\sigma,
    \phi\}$. (recall that $R$ is not semicommutative by Example
    \ref{ex-Sigmasemicnosem}).
\end{example}
Taking into account that for skew PBW extensions each of the
endomorphisms $\sigma_i\in \Sigma$ mentioned in Remark
\ref{rem-notacionSigmaDelta} is injective, from now on when we refer
to endomorphisms or families of ring endomorphisms, it will be
understood that these are injective endomorphisms, if not stated
otherwise.
\begin{lemma}\label{lema-Sigmaidem}
If $\Sigma=\{\sigma_1,\dots,\sigma_n\}$ is a finite family of
endomorphisms of a $\Sigma$-semicommutative ring $R$, then we have
the following properties.
\begin{enumerate}
\item[\rm (1)] $\sigma^{\alpha}(1)=1$, for every $\alpha\in \mathbb{N}^n$.
\item[\rm (2)] $\sigma^{\alpha}(e)=e$, for each idempotent $e$ in $R$, and every $\alpha\in \mathbb{N}^n$.
\end{enumerate}
\end{lemma}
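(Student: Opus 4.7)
The plan is to handle (1) first as an essentially trivial observation, and then exploit it together with the $\Sigma$-semicommutativity hypothesis to derive (2).

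For (1), recall that throughout the paper ring endomorphisms are required to preserve the multiplicative identity (and, as emphasized just before the lemma, the $\sigma_i$ are in fact injective). Hence $\sigma_i(1)=1$ for every $1\le i\le n$, and a straightforward induction on $|\alpha|=\alpha_1+\dots+\alpha_n$ using $\sigma^{\alpha}=\sigma_1^{\alpha_1}\circ\dots\circ\sigma_n^{\alpha_n}$ yields $\sigma^{\alpha}(1)=1$ for every $\alpha\in\mathbb{N}^n$.

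For (2), I would exploit the two orthogonality relations $e(1-e)=0$ and $(1-e)e=0$ satisfied by any idempotent $e\in R$. Applying $\Sigma$-semicommutativity (Definition \ref{def-sigma-semicomm}) to the first relation yields $eR\sigma^{\alpha}(1-e)=0$ for every $\alpha\in\mathbb{N}^n\setminus\{0\}$; taking $1\in R$ in the middle and using that $\sigma^{\alpha}$ is additive together with part (1) gives
\[
0 = e\sigma^{\alpha}(1-e) = e\sigma^{\alpha}(1) - e\sigma^{\alpha}(e) = e - e\sigma^{\alpha}(e),
\]
so $e = e\sigma^{\alpha}(e)$. Doing the same with $(1-e)e=0$ yields $(1-e)\sigma^{\alpha}(e)=0$, which expands to $\sigma^{\alpha}(e) = e\sigma^{\alpha}(e)$. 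Chaining these two identities gives $\sigma^{\alpha}(e) = e\sigma^{\alpha}(e) = e$, as required. The case $\alpha=0$ is immediate since $\sigma^{0}={\rm id}_R$.

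There is no real obstacle here; the only subtlety worth spelling out is that the hypothesis $\Sigma$-semicommutative is applied to the two distinct relations $e(1-e)=0$ and $(1-e)e=0$ (one alone is not enough), and that part (1) is needed to rewrite $\sigma^{\alpha}(1-e)$ as $1-\sigma^{\alpha}(e)$ before extracting the desired equality.
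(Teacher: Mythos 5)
Your treatment of part (2) is correct and coincides with the paper's: apply $\Sigma$-semicommutativity to both orthogonality relations $e(1-e)=0$ and $(1-e)e=0$, use part (1) to simplify $e\sigma^{\alpha}(1-e)$, and chain $e=e\sigma^{\alpha}(e)$ with $\sigma^{\alpha}(e)=e\sigma^{\alpha}(e)$.

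Part (1), however, contains a genuine gap: you dismiss it as trivial on the grounds that ``ring endomorphisms are required to preserve the multiplicative identity,'' but that is not a convention of this paper. The endomorphism $\sigma$ of Example \ref{ex-Sigmasemicnosem} satisfies $\sigma(1)=\left(\begin{smallmatrix}1&0\\0&0\end{smallmatrix}\right)\neq 1$, and the paper explicitly points this out right after Corollary \ref{cor-genprop2.6}; likewise the corollary of Kim et al.\ quoted immediately after the lemma treats $\sigma(1)=1$ as a conclusion that requires $\sigma$ to be a monomorphism, not as part of the definition of endomorphism. So statement (1) is precisely the nontrivial content of the lemma, and your induction on $|\alpha|$ starts from a base case you have not justified. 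Note also that the standing injectivity assumption alone does not rescue you: an injective, non-unital ring endomorphism of a ring with identity exists in general, so both hypotheses (injectivity \emph{and} $\Sigma$-semicommutativity) are genuinely needed. The paper's argument is: $\sigma^{\alpha}(1)$ is an idempotent and $\sigma^{\alpha}(1)\bigl(1-\sigma^{\alpha}(1)\bigr)=0$; by $\Sigma$-semicommutativity $\sigma^{\alpha}(1)R\,\sigma^{\alpha}\bigl(1-\sigma^{\alpha}(1)\bigr)=0$, and in particular
\[
0=\sigma^{\alpha}(1)\,\sigma^{\alpha}\bigl(1-\sigma^{\alpha}(1)\bigr)=\sigma^{\alpha}\bigl(1\cdot(1-\sigma^{\alpha}(1))\bigr)=\sigma^{\alpha}(1)-\sigma^{\alpha}\bigl(\sigma^{\alpha}(1)\bigr),
\]
so $\sigma^{\alpha}(1)=\sigma^{\alpha}\bigl(\sigma^{\alpha}(1)\bigr)$, and injectivity of $\sigma^{\alpha}$ forces $\sigma^{\alpha}(1)=1$. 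You would need to supply an argument of this kind for (1); once that is done, your part (2) goes through unchanged.
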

\begin{proof}
Suppose that $R$ is a $\Sigma$-semicommutative ring, with
$\Sigma=\{\sigma_1,\dots,\sigma_n\}$. Since every $\sigma_i$ is
assumed injective, it is clear that $\sigma^{\alpha}$ is injective,
for each $\alpha\in \mathbb{N}^n$.
\begin{enumerate}
\item[\rm (1)] If $\alpha=0=(0,\dots,0)$ then $\sigma^{\alpha}$ is the identity endomorphism of $R$, thus $\sigma^{\alpha}(1)=1$. Suppose that $\alpha\in \mathbb{N}^n\setminus \{0\}$. Notice that $0 = \sigma^{\alpha}(1)(1-\sigma^{\alpha}(1))=\sigma^{\alpha}(1)-\sigma^{\alpha}(\sigma^{\alpha}(1))$. Since $R$ is $\Sigma$-semicommutative, $0 = \sigma^{\alpha}(\sigma^{\alpha}(1))$ implies $0=\sigma^{\alpha}(1)\sigma^{\alpha}(1-\sigma^{\alpha}(1))=\sigma^{\alpha}(1)\sigma^{\alpha}(1)-\sigma^{\alpha}(1)\sigma^{\alpha}(\sigma^{\alpha}(1)) = \sigma^{\alpha}(1)-\sigma^{\alpha}(\sigma^{\alpha}(1))$. Then $\sigma^{\alpha}(1)= \sigma^{\alpha}(\sigma^{\alpha}(1))$. Therefore $ 1= \sigma^{\alpha}(1)$, since  $\sigma^{\alpha}$ is injective.
\item[\rm (2)] If $e$ is an idempotent of $R$, then $e(1-e)=0$ implies $0 = e\sigma^{\alpha}(1-e)=e\sigma^{\alpha}(1)-e\sigma^{\alpha}(e)=e-e\sigma^{\alpha}(e)$, since $R$ is $\Sigma$-semicommutative and by part (1). Thus $e = e\sigma^{\alpha}(e)$. Now, $(1-e)e=0$ implies $0=(1-e)\sigma^{\alpha}(e)=\sigma^{\alpha}(e)-e\sigma^{\alpha}(e)$, since $R$ is $\Sigma$-semicommutative. Hence, $\sigma^{\alpha}(e)=e\sigma^{\alpha}(e)$, and so $\sigma^{\alpha}(e)=e$.
\end{enumerate}
\end{proof}
\begin{corollary}\cite[Lemma 2.1]{Kim2014}
If $R$ is a $\sigma$-semicommutative ring, then the following
statements hold:
\begin{enumerate}
\item[\rm (1)] $\sigma(1)= \sigma^k(1)$, for all $k\geq 2$.
\item[\rm (2)] $\sigma(e)=e\sigma(e)=e\sigma(1)=e\sigma^k(e)$, for all $k\geq 2$ and every idempotent $e$ of $R$.
\item[\rm (3)] If $\sigma$ is a monomorphism, then $\sigma(1)=1$, and hence $\sigma(e)=e$, for all idempotent element $e\in R$.
\item[\rm (4)] If $\sigma$ is an epimorphism, then $\sigma(e)=e$, for every idempotent $e$ of $R$.
\end{enumerate}
\end{corollary}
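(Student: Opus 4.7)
The plan is to derive this as a direct specialization of Lemma~\ref{lema-Sigmaidem} taken with the singleton family $\Sigma=\{\sigma\}$, coupled with Proposition~\ref{prop-Sigmasemiciifeach} (so that $\sigma$-semicommutativity of $R$ automatically propagates to $\sigma^k$-semicommutativity for every $k\ge 1$). The one place where the proof of Lemma~\ref{lema-Sigmaidem}(1) invoked injectivity was the final step cancelling $\sigma^{\alpha}$ from the equation $\sigma^{\alpha}(1)=\sigma^{\alpha}(\sigma^{\alpha}(1))$; by simply not performing that cancellation we obtain part~(1) of the corollary, and part~(2) falls out in the same way.

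First I would prove~(1). Since $\sigma(1)^2=\sigma(1)$, the element $\sigma(1)$ is idempotent, so $\sigma(1)(1-\sigma(1))=0$ and $(1-\sigma(1))\sigma(1)=0$. Applying $\sigma$-semicommutativity to each of these identities gives $\sigma(1)\sigma(1-\sigma(1))=0$, equivalent to $\sigma(1)=\sigma(1)\sigma^2(1)$, and $(1-\sigma(1))\sigma^2(1)=0$, equivalent to $\sigma^2(1)=\sigma(1)\sigma^2(1)$. Hence $\sigma(1)=\sigma^2(1)$, and applying $\sigma$ repeatedly yields $\sigma^k(1)=\sigma^{k+1}(1)$ for every $k\ge 1$, so $\sigma(1)=\sigma^k(1)$ for all $k\ge 2$.

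Next I would address~(2). For an idempotent $e\in R$, the relation $e(1-e)=0$ together with $\sigma^k$-semicommutativity yields $e\sigma^k(1-e)=0$, i.e.\ $e\sigma^k(1)=e\sigma^k(e)$, while $(1-e)e=0$ yields $(1-e)\sigma^k(e)=0$, i.e.\ $\sigma^k(e)=e\sigma^k(e)$. Taking $k=1$ gives $\sigma(e)=e\sigma(e)=e\sigma(1)$, and combining with~(1) yields $e\sigma(1)=e\sigma^k(1)=e\sigma^k(e)$ for every $k\ge 2$. Parts~(3) and~(4) are then genuine refinements obtained from the extra hypothesis on $\sigma$: if $\sigma$ is injective, (1) gives $\sigma(1)=\sigma(\sigma(1))$, so $1=\sigma(1)$ and hence $\sigma(e)=e\sigma(1)=e$ by~(2); if $\sigma$ is surjective, then for any $r\in R$ we pick $s$ with $\sigma(s)=r$ and compute $r\sigma(1)=\sigma(s)\sigma(1)=\sigma(s\cdot 1)=\sigma(s)=r$, so $\sigma(1)$ is a right identity (and symmetrically a left identity), forcing $\sigma(1)=1$ by uniqueness, and~(2) again delivers $\sigma(e)=e$.

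There is no serious obstacle here; the only point requiring care is keeping track of which direction of the semicommutativity condition is invoked in~(1), since one needs \emph{both} the $e(1-e)=0$ and $(1-e)e=0$ forms (the definition only inserts $\sigma$ on the right factor, so the two sides of the idempotent identity play asymmetric roles in the argument). The subtler structural point is to resist prematurely using injectivity in~(1) and~(2), so that parts~(3) and~(4) remain genuine refinements corresponding to the two weaker hypotheses on $\sigma$.
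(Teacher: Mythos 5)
Your proof is correct. The paper itself gives no argument for this corollary --- it simply cites Kim et al.\ and places the statement after Lemma~\ref{lema-Sigmaidem} --- and your derivation is the natural one, reproducing the idempotent computations of that lemma (via Proposition~\ref{prop-Sigmasemiciifeach} to pass from $\sigma$- to $\sigma^k$-semicommutativity) while correctly postponing the injectivity cancellation to part~(3); your observation that parts~(1), (2) and (4) cannot be read off directly from Lemma~\ref{lema-Sigmaidem}, which operates under the paper's standing injectivity convention, is also accurate.
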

\begin{proposition}\label{prop-sigmaimplAbel}
$\Sigma$-semicommutative rings are Abelian.
\end{proposition}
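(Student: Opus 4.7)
The plan is to adapt the classical argument that a semicommutative ring is Abelian, using Lemma \ref{lema-Sigmaidem} to neutralize the endomorphisms $\sigma^{\alpha}$ that appear in the defining relation of $\Sigma$-semicommutativity. Concretely, I will take an arbitrary idempotent $e\in R$ and show that $er=re$ for every $r\in R$, by the usual trick of squeezing $er$ and $re$ between the same element $ere$.

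First, I will fix $e = e^{2}\in R$ and any $\alpha\in \mathbb{N}^{n}\setminus\{0\}$ (the restriction $\alpha\ne 0$ in Definition \ref{def-sigma-semicomm} is the only subtlety, and it is the reason Lemma \ref{lema-Sigmaidem} is needed). From $e(1-e)=0$ and $\Sigma$-semicommutativity I get $eR\sigma^{\alpha}(1-e)=0$. By Lemma \ref{lema-Sigmaidem}, $\sigma^{\alpha}(1)=1$ and $\sigma^{\alpha}(e)=e$, hence $\sigma^{\alpha}(1-e)=1-e$. This turns the previous identity into $eR(1-e)=0$, so $er(1-e)=0$ for every $r\in R$, which gives $er=ere$.

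Symmetrically, starting from $(1-e)e=0$ and applying $\Sigma$-semicommutativity plus Lemma \ref{lema-Sigmaidem} in the same fashion, I obtain $(1-e)Re=0$, and therefore $re=ere$ for every $r\in R$. Combining the two equalities yields $er=ere=re$, so $e$ is central, proving that $R$ is Abelian.

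There is essentially no obstacle here beyond remembering the $\alpha\ne 0$ restriction in the definition; the whole proof rests on the clean fact, already recorded in Lemma \ref{lema-Sigmaidem}, that injective endomorphisms in a $\Sigma$-semicommutative ring fix idempotents, so the inserted $\sigma^{\alpha}$ disappears exactly where it would otherwise break the classical two-line argument.
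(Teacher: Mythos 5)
Your proof is correct and follows essentially the same route as the paper: apply $\Sigma$-semicommutativity to $e(1-e)=0$ and $(1-e)e=0$, use Lemma \ref{lema-Sigmaidem} to replace $\sigma^{\alpha}(1-e)$ and $\sigma^{\alpha}(e)$ by $1-e$ and $e$, and conclude $er=ere=re$. No differences worth noting.
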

\begin{proof}
Let $R$ be a $\Sigma$-semicommutative ring and let $e$ be an
idempotent in $R$. By the $\Sigma$-semicommutative property of $R$,
$e(1-e)=0=(1-e)e$ implies
$eR\sigma^{\alpha}(1-e)=0=(1-e)R\sigma^{\alpha}(e)$, for every
${\alpha}\in \mathbb{N}\setminus \{0\}$. From Lemma
\ref{lema-Sigmaidem} (2) we know that $eR(1-e)=0=(1-e)Re$, i. e.,
$er(1-e)=er-ere=0=(1-e)re=re-ere$, for all $r\in R$. In this way,
$er=re$ for all $r\in R$, and therefore $R$ is Abelian.
\end{proof}
\begin{corollary} \cite[Proposition
2.6]{Baser} \label{cor-genprop2.6} Let $R$ be a
$\sigma$-semicommutative ring.  If $\sigma(1) = 1$, then $R$ is
Abelian.
\end{corollary}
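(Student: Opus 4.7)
The plan is to adapt the argument of Proposition \ref{prop-sigmaimplAbel} to the single-endomorphism case, using the hypothesis $\sigma(1)=1$ in place of the conclusion of Lemma \ref{lema-Sigmaidem}(1). Specializing $\Sigma=\{\sigma\}$ reduces $\Sigma$-semicommutativity to the condition that $ab=0$ implies $aR\sigma(b)=0$, so the goal becomes to show that every idempotent $e\in R$ is central.

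First I would start from an arbitrary idempotent $e\in R$ and exploit the two relations $e(1-e)=0$ and $(1-e)e=0$. Applying $\sigma$-semicommutativity to each gives $eR\sigma(1-e)=0$ and $(1-e)R\sigma(e)=0$. Because $\sigma$ is a ring endomorphism with $\sigma(1)=1$, we have $\sigma(1-e)=1-\sigma(e)$, so these relations rewrite as $eR(1-\sigma(e))=0$ and $(1-e)R\sigma(e)=0$. Taking $r=1$ in each yields $e=e\sigma(e)$ and $\sigma(e)=e\sigma(e)$, hence $\sigma(e)=e$. This is exactly the statement of Lemma \ref{lema-Sigmaidem}(2) for $\Sigma=\{\sigma\}$, obtained here without invoking injectivity.

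With $\sigma(e)=e$ in hand, the two relations $eR(1-\sigma(e))=0$ and $(1-e)R\sigma(e)=0$ become $eR(1-e)=0$ and $(1-e)Re=0$. Expanding for an arbitrary $r\in R$, the first gives $er=ere$ and the second gives $re=ere$, so $er=re$. Thus every idempotent is central, and $R$ is Abelian.

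The only subtlety, and the step deserving care, is that Proposition \ref{prop-sigmaimplAbel} was proved under the standing assumption that the endomorphisms in $\Sigma$ are injective, via Lemma \ref{lema-Sigmaidem}. In the present corollary injectivity is not assumed; however, the hypothesis $\sigma(1)=1$ plays exactly the role that the injectivity argument played before, allowing us to pass from $e\sigma(1-e)=\sigma(1-e)\cdot\text{something}$ to the clean equality $\sigma(1-e)=1-\sigma(e)$ and then to conclude $\sigma(e)=e$ directly. Once this substitution is recognized, the rest of the proof is a routine specialization of the argument for Proposition \ref{prop-sigmaimplAbel}.
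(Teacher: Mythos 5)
Your proof is correct and follows essentially the same route as the paper: the corollary is the single-endomorphism specialization of Proposition \ref{prop-sigmaimplAbel}, with the hypothesis $\sigma(1)=1$ standing in for the injectivity used in Lemma \ref{lema-Sigmaidem}(1) to obtain $\sigma(e)=e$ (exactly as in part (2) of the corollary quoted from Kim et al.). Your explicit derivation of $\sigma(e)=e\sigma(e)=e$ and the subsequent centrality argument match the paper's reasoning step for step.
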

The ring $R$ in Example \ref{ex-Sigmasemicnosem} is
$\sigma$-semicommutative, but $\sigma(1)\neq 1$ and $R$ is not
Abelian \cite[Example 2.7]{Baser}.

\vspace{0.2cm}

The following theorem generalizes \cite[Theorem 2.4]{Baser}.
\begin{theorem}\label{teo-Sigmarigidiffreduc}
A ring $R$ is $\Sigma$-semicommutative and reduced if and only if
$R$ is $\Sigma$-rigid.
\end{theorem}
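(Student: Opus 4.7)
The plan is to prove both implications directly, with the reverse direction relying on the standard fact (already alluded to in the paragraph preceding the theorem, and derivable from the $\Sigma$-rigid hypothesis) that $\Sigma$-rigid rings are reduced and $\Sigma$-compatible.

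For the direction $(\Rightarrow)$, I would take $a\in R$ and $\alpha\in\mathbb{N}^n$ with $a\sigma^{\alpha}(a)=0$ and show $a=0$. If $\alpha=0$ then $\sigma^{\alpha}=\mathrm{id}_R$, so $a^2=0$ and reducedness gives $a=0$. If $\alpha\neq 0$, I would use that a reduced ring is reversible (via Shin's $\text{reduced}\Rightarrow\text{symmetric}\Rightarrow\text{reversible}$ chain mentioned in the introduction) to turn $a\sigma^{\alpha}(a)=0$ into $\sigma^{\alpha}(a)a=0$. Then $\Sigma$-semicommutativity applied to this product, with the same nonzero $\alpha$, yields $\sigma^{\alpha}(a)R\sigma^{\alpha}(a)=0$; evaluating at $1$ gives $\sigma^{\alpha}(a)^2=0$, so reducedness forces $\sigma^{\alpha}(a)=0$, and the injectivity of $\sigma^{\alpha}$ (standing convention stated just before Lemma \ref{lema-Sigmaidem}) gives $a=0$.

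For the direction $(\Leftarrow)$, reducedness is immediate: specializing the $\Sigma$-rigid condition to $\alpha=0$ turns $a^2=0$ into $a\sigma^{0}(a)=0$, hence $a=0$. For $\Sigma$-semicommutativity, I would first establish the auxiliary fact that $\Sigma$-rigid implies $\Sigma$-compatible, i.e.\ $ab=0\Rightarrow a\sigma^{\alpha}(b)=0$ for every $\alpha\in\mathbb{N}^n$. The standard one-variable argument goes through: since $R$ is reduced and hence reversible, $ab=0$ gives $ba=0$, and then
\[
\bigl(a\sigma_i(b)\bigr)\,\sigma_i\bigl(a\sigma_i(b)\bigr)\;=\;a\sigma_i(b)\sigma_i(a)\sigma_i^{2}(b)\;=\;a\sigma_i(ba)\sigma_i(b)\;=\;0,
\]
so rigidity of $\sigma_i$ yields $a\sigma_i(b)=0$; iterating over the family and the exponents $\alpha_i$ gives $a\sigma^{\alpha}(b)=0$. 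Finally, reduced implies semicommutative (again from Shin), so $a\sigma^{\alpha}(b)=0$ upgrades to $aR\sigma^{\alpha}(b)=0$, which is exactly the $\Sigma$-semicommutative condition.

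The main obstacle is the auxiliary lemma that $\Sigma$-rigid $\Rightarrow$ $\Sigma$-compatible; everything else is a short manipulation using reducedness, reversibility, and the injectivity of the $\sigma^{\alpha}$. That lemma is well known in the single-endomorphism setting and the extension to a finite family $\Sigma$ is routine by iterating $\sigma_1,\dotsc,\sigma_n$, so no serious difficulty is expected.
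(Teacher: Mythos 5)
Your proof is correct, and your direction ($\Sigma$-semicommutative and reduced $\Rightarrow$ $\Sigma$-rigid) is essentially identical to the paper's: both pass from $a\sigma^{\alpha}(a)=0$ to $\sigma^{\alpha}(a)a=0$ via reversibility of reduced rings, apply $\Sigma$-semicommutativity to get $\sigma^{\alpha}(a)R\sigma^{\alpha}(a)=0$, and finish using reducedness and injectivity of $\sigma^{\alpha}$. The other direction is where you diverge in packaging. For reducedness, the paper runs a rigidity computation on $r\sigma^{\alpha}(r)$ (an argument that would survive even if $0$ were excluded from the admissible exponents), whereas you simply take $\alpha=0$ in the definition of $\Sigma$-rigid --- legitimate here, since the definition recalled in Section 2 quantifies over all $\alpha\in\mathbb{N}^n$. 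For semicommutativity, the paper gives a single self-contained computation: from $ab=0$ it derives $ba=0$ and then checks directly that $x:=ar\sigma^{\alpha}(b)$ satisfies $x\sigma^{\alpha}(x)=0$, so rigidity kills $ar\sigma^{\alpha}(b)$ for every $r\in R$ and every $\alpha$ in one stroke. You instead factor this into two standard lemmas: $\Sigma$-rigid $\Rightarrow$ $\Sigma$-compatible (the same rigidity trick applied to $a\sigma_i(b)$, iterated over the family and the exponents), followed by Shin's reduced $\Rightarrow$ semicommutative to insert the arbitrary ring element. Both routes are sound; the paper's is self-contained and avoids the iteration, while yours is more modular and reuses facts (compatibility of rigid rings, semicommutativity of reduced rings) that are needed elsewhere in the paper anyway. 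One minor typo: in your display, $a\sigma_i(b)\sigma_i(a)\sigma_i^{2}(b)$ equals $a\sigma_i(ba)\sigma_i^{2}(b)$ rather than $a\sigma_i(ba)\sigma_i(b)$; the conclusion is unaffected since $\sigma_i(ba)=0$.
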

\begin{proof}
Suppose that $R$ is a $\Sigma$-rigid ring, for
$\Sigma=\{\sigma_1,\dots,\sigma_n\}$ a finite family of
monomorphisms of $R$. If $r^2=0$ for $r\in R$ then $0 =
\sigma^{\alpha}(r^2)=r\sigma^{\alpha}(r^2)\sigma^{\alpha}(\sigma^{\alpha}(r))=r\sigma^{\alpha}(r)\sigma^{\alpha}(r)\sigma^{\alpha}(\sigma^{\alpha}(r))
= r\sigma^{\alpha}(r)\sigma^{\alpha}(r\sigma^{\alpha}(r))$, for
every $\alpha\in \mathbb{N}^n$. Then $r\sigma^{\alpha}(r)=0$, since
$R$ is $\Sigma$-rigid, whence $r=0$, that is, $R$ is reduced. Now,
assume that $ab=0$, for $a,b\in R$. Then $0=baba=(ba)^2$ implies
$ba=0$, since $R$ is reduced, whence
$0=ar\sigma^{\alpha}(ba)\sigma^{\alpha}(r)\sigma^{\alpha}(\sigma^{\alpha}(b)=
ar\sigma^{\alpha}(b)\sigma^{\alpha}(a)\sigma^{\alpha}(r)\sigma^{\alpha}(\sigma^{\alpha}(b)
= ar\sigma^{\alpha}(b)\sigma^{\alpha}(ar\sigma^{\alpha}(b))$. Thus,
$ar\sigma^{\alpha}(b)=0$, for all $r\in R$, since $R$ is
$\Sigma$-rigid. Then $aR\sigma^{\alpha}(b)=0$, which implies that
$R$ is $\Sigma$-semicommutative.

For the converse, suppose that $R$ is reduced and
$\Sigma$-semicommutative. If $r\sigma^{\alpha}(r)=0$ for all $r\in
R$, then $\sigma^{\alpha}(r)r=0$, since $R$ is reduced. So, by the
$\Sigma$-semicommutative property of $R$,
$\sigma^{\alpha}(r)R\sigma^{\alpha}(r)=0$, whence
$0=\sigma^{\alpha}(r)\sigma^{\alpha}(r)=\sigma^{\alpha}(r^2)$, which
implies that $r^2 = 0$, since $\sigma^{\alpha}$ is a monomorphism.
By assumption $R$ is reduced, so $r = 0$, and hence $R$ is
$\Sigma$-rigid.
\end{proof}
The following example shows that the reducibility condition and the
injectivity condition in Theorem \ref{teo-Sigmarigidiffreduc} cannot
be omitted.
\begin{example}\label{exam-notred}
Let $R$ the $\Sigma$-semicommutative ring as in Example
\ref{exampleS2}. Notice that $R$ is not reduded, since for example
$\begin{pmatrix}0 & 1\\ 0 & 0
\end{pmatrix}^2=\begin{pmatrix}0 & 0\\ 0 & 0 \end{pmatrix}$. Also, $R$ is not $\Sigma$-rigid:  $\begin{pmatrix}0 & 1\\ 0 & 0
\end{pmatrix}\sigma_2\sigma_3\left (\begin{pmatrix}0 & 1\\ 0 & 0
\end{pmatrix}\right)=\begin{pmatrix}0 & 0\\ 0 & 0
\end{pmatrix}$.
\end{example}
\begin{corollary}\cite[Theorem 2.4]{Baser}\label{cor-teo2.4} A ring $R$ is $\sigma$-rigid if and only if $R$ is a $\sigma$-semicommutative reduced
 ring and $\sigma$ is a monomorphism.
\end{corollary}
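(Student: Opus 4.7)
The plan is to deduce this corollary as the one-element specialization of Theorem \ref{teo-Sigmarigidiffreduc}, applied to $\Sigma=\{\sigma\}$, together with the simple remark that $\sigma$-rigidness in Krempa's sense already forces $\sigma$ to be injective.

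For the forward direction, I would start from $R$ being $\sigma$-rigid and first observe that $\sigma$ is a monomorphism: if $\sigma(a)=0$ then $a\sigma(a)=0$, so $a=0$. Next I would promote Krempa's exponent-one rigidness to the multi-exponent rigidness used in Definition~3.2 of \cite{Reyes2015} for the singleton family $\Sigma=\{\sigma\}$; that is, I would show $a\sigma^{k}(a)=0\Rightarrow a=0$ for every $k\in\mathbb{N}$. The case $k=0$ is reducedness of $R$, which follows from the classical fact that a $\sigma$-rigid ring is reduced (if $a^2=0$, then $\sigma(a)^2=0$, hence $a\sigma(a)\sigma(a\sigma(a))=a\sigma(a)^{2}\sigma^{2}(a)=0$, so $a\sigma(a)=0$ and then $a=0$). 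For $k\ge 2$, reducedness permits one to slide the $\sigma^{k}$ around and reduce to the $k=1$ case. With $R$ now $\{\sigma\}$-rigid in the sense of Definition~3.2 of \cite{Reyes2015}, Theorem \ref{teo-Sigmarigidiffreduc} applied to $\Sigma=\{\sigma\}$ delivers that $R$ is $\{\sigma\}$-semicommutative and reduced, which by Definition \ref{def-sigma-semicomm} restricted to a singleton family is precisely that $R$ is $\sigma$-semicommutative and reduced.

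For the reverse direction, suppose $R$ is $\sigma$-semicommutative and reduced, with $\sigma$ a monomorphism. Taking $\Sigma=\{\sigma\}$, the standing injectivity convention introduced just before Lemma \ref{lema-Sigmaidem} is satisfied, and Definition \ref{def-sigma-semicomm} applied to this singleton family says exactly that $R$ is $\{\sigma\}$-semicommutative. The other direction of Theorem \ref{teo-Sigmarigidiffreduc} then yields that $R$ is $\{\sigma\}$-rigid; specializing $\alpha\in\mathbb{N}$ to $\alpha=1$ recovers Krempa's defining condition $a\sigma(a)=0\Rightarrow a=0$, so $R$ is $\sigma$-rigid.

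The only nontrivial step is matching Krempa's one-exponent definition of $\sigma$-rigidness against the all-exponents version embedded in Definition~3.2 of \cite{Reyes2015} when $n=1$; everything else is direct bookkeeping on the statement of Theorem \ref{teo-Sigmarigidiffreduc}. I expect this matching to be the main obstacle, and I would dispatch it by first establishing reducedness from $a\sigma(a)=0\Rightarrow a=0$ and then using the reduced property to interchange factors in products of the form $a\sigma^{k}(a)$.
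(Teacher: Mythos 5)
Your proposal is correct and coincides with the paper's (entirely implicit) argument: the corollary carries no written proof and is meant to be read off from Theorem \ref{teo-Sigmarigidiffreduc} with $\Sigma=\{\sigma\}$, which is exactly your specialization, and your extra step of reconciling Krempa's single-exponent condition $a\sigma(a)=0\Rightarrow a=0$ with the all-exponents condition $a\sigma^{k}(a)=0\Rightarrow a=0$ of \cite[Definition 3.2]{Reyes2015} is precisely the bookkeeping the paper leaves tacit. One small caution: for $k\ge 2$ it is not reducedness alone that lets you ``slide'' $\sigma^{k}$ around, but the $\sigma$-compatibility ($ab=0\Leftrightarrow a\sigma(b)=0$) that one first derives from the $k=1$ rigidity together with reducedness and injectivity; once that is stated, the descent from $a\sigma^{k}(a)=0$ to $a^{2}=0$ is immediate.
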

Mohammadi et al. in \cite{Mohammadi2016} defined a new class of
rings called nil-reversible rings. A ring $R$ is called
\emph{nil-reversible} if for every $a\in R$, $b\in N(R)$, $ab = 0$
if and only if $ba = 0$. Nil-reversible rings are a generalization
of reversible rings. On the other hand, the authors
\cite{SuarezHigueraReyes2021} defined and studied rings with
$\Sigma$-skew reflexive nilpotent property ($\Sigma$-skew RNP
rings). Let $\Sigma=\{\sigma_1,\dots, \sigma_n\}$ be a finite family
of endomorphisms of a ring $R$. $\Sigma$ is called a \emph{right}
(resp., \emph{left}) {\em skew RNP family} if for $a, b\in N(R)$,
$aRb = 0$ implies that $bR\sigma^{\alpha}(a) = 0$ (resp.
$\sigma^{\alpha}(b)Ra = 0$), for  every $\alpha\in \mathbb{N}^n$.
$R$ is said to be right (resp., left) $\Sigma$-{\em skew RNP} if
there exists a right (resp., left) skew RNP family of endomorphisms
$\Sigma$ of $R$. If $R$ is both right and left $\Sigma$-skew RNP,
then we say that $R$ is $\Sigma$-{\em skew RNP} \cite[Definition
4.1]{SuarezHigueraReyes2021}. For nil-reversible rings the
$\Sigma$-semicommutative and $\Sigma$-skew RNP properties are
equivalent.
\begin{theorem}\label{teo-SimasemiciffSigmaskewRNP}
If $R$ is a nil-reversible ring, then $R$ is
$\Sigma$-semicommutative if and only if $R$ is right $\Sigma$-skew
RNP.
\end{theorem}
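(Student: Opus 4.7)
My approach is to prove the equivalence in two implications, using nil-reversibility as the bridge between the zero-product form $ab=0$ that governs $\Sigma$-semicommutativity and the annihilator form $aRb=0$ that governs the right $\Sigma$-skew RNP property.

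For $(\Rightarrow)$, I would assume $R$ is $\Sigma$-semicommutative and take $a,b\in N(R)$ with $aRb=0$. Specialising to $r=1$ yields $ab=0$, and since $b\in N(R)$, nil-reversibility produces $ba=0$. The $\Sigma$-semicommutative hypothesis applied to $ba=0$ then delivers $bR\sigma^{\alpha}(a)=0$ for every $\alpha\in\mathbb{N}^{n}\setminus\{0\}$. The remaining case $\alpha=0$ amounts to the reflexivity $bRa=0$: for $r\in R$, the equation $arb=0$ together with nil-reversibility applied to $(ar)\cdot b=0$ (using $b\in N(R)$) and to $a\cdot(rb)=0$ (using $a\in N(R)$) yields $bar=0$ and $rba=0$ respectively. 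Observing moreover that $(bra)^{2}=br(ab)ra=0$, so $bra\in N(R)$, together with $a(bra)=0$ and $(bra)b=0$, iterated applications of nil-reversibility to these nilpotent-involving equations should collapse $bra$ to zero.

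For $(\Leftarrow)$, assume $R$ is right $\Sigma$-skew RNP and let $a,b\in R$ with $ab=0$. The goal is to produce $aR\sigma^{\alpha}(b)=0$ for every $\alpha\in\mathbb{N}^{n}\setminus\{0\}$. Using nil-reversibility when one of the factors lies in $N(R)$, I can swap $ab=0$ into $ba=0$ and then bootstrap this to the annihilator-form equation $bRa=0$ on the nilpotent pair by the same reflexivity-on-nilpotents technique as in the forward direction. The right $\Sigma$-skew RNP hypothesis, applied to the pair $(b,a)$, then yields $aR\sigma^{\alpha}(b)=0$, as desired.

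The principal obstacle I expect is the reflexivity-on-nilpotents step inside the forward direction: showing that for $a,b\in N(R)$ in a nil-reversible ring, $aRb=0$ implies $bRa=0$. Naive nil-reversibility swaps on the equation $arb=0$ only produce $baR=0$ and $Rba=0$, both of which reduce to the already-known $ba=0$ and do not directly touch the shape $bra$. The substance of the argument lies in exploiting that $bra$ is itself nilpotent (since $(bra)^{2}=br(ab)ra=0$), so that nil-reversibility becomes available with $bra$ as the nilpotent factor, and then carefully iterating such swaps against both $a$ and $b$ to force $bra=0$ for each $r\in R$.
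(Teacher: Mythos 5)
Your overall skeleton coincides with the paper's: in both directions one passes between the annihilator form $aRb=0$ and the product form $ab=0$, flips the product with nil-reversibility, and then applies the remaining hypothesis to the reversed product. For the forward direction your treatment of $\alpha\neq 0$ is exactly the paper's argument. Where you part company is the case $\alpha=0$ of the right skew RNP condition, i.e.\ the claim $bRa=0$: the paper does not treat this case separately at all, and your proposed mechanism for it does not close. From $(bra)^2=0$, $a(bra)=0$ and $(bra)b=0$, nil-reversibility only yields $(bra)a=0$ and $b(bra)=0$, that is $bra^2=0$ and $b^2ra=0$, and iterating such swaps never produces $bra=0$. The step is nevertheless true and has a one-line proof you missed: since $ab=0$ one has $(ra)b=r(ab)=0$ for every $r\in R$, and because $b\in N(R)$, nil-reversibility applied to the product $(ra)\cdot b$ gives $b(ra)=bra=0$ directly. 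So the ``principal obstacle'' you identify is easily removed, but the route you sketch for it is a dead end.

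The genuine gap is in your converse. You take arbitrary $a,b\in R$ with $ab=0$ and then invoke nil-reversibility ``when one of the factors lies in $N(R)$'' and the right $\Sigma$-skew RNP hypothesis ``applied to the pair $(b,a)$'' --- but nothing forces $a$ or $b$ to be nilpotent (take complementary idempotents $e$ and $1-e$, for instance), nil-reversibility only flips zero products having a nilpotent factor, and the right $\Sigma$-skew RNP condition is by definition only a statement about pairs of elements of $N(R)$. Your phrase ``on the nilpotent pair'' is therefore unfounded, and the passage from $bRa=0$ to $aR\sigma^{\alpha}(b)=0$ is not licensed by the hypothesis for general $a,b$. (The paper's own converse makes the same two moves --- it flips $Rab=0$ to $bRa=0$ ``by nil-reversibility'' and then applies the RNP hypothesis --- without establishing nilpotency either, so you have faithfully reproduced its route; but as written your proposal does not supply the missing justification, and this is the point that would have to be repaired for the converse to stand.)
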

\begin{proof}
Suppose that $R$ is $\Sigma$-semicommutative and let $a,b\in N(R)$
such that $aRb=0$. Then $ab=0$, and by the nil-reversibility of $R$,
$ba=0$, whence $bR\sigma^{\alpha}(a)=0$, since $R$ is
$\Sigma$-semicommutative. This shows that $R$ is right $\Sigma$-skew
RNP.

For the converse, if $ab=0$ with $a,b\in R$, then $Rab=0$ and thus
$bRa=0$ by the nil-reversibility of $R$. Thus $aR\sigma^{\alpha}(a)
= 0$, since $R$ is right $\Sigma$-skew RNP, and so $R$ is
$\Sigma$-semicommutative.
\end{proof}
\section{Skew PBW extensions over $\Sigma$-semicommutative rings}\label{sect-sPBWextoverSigma}
Let $A=\sigma(R)\langle x_1, \dots, x_n\rangle$ be a skew PBW
extension over a ring $R$. We start this section with the following
property that extends \cite[Theorem 3.3 (2)]{Baser}.
\begin{proposition}\label{prop-AsemimplRSigmasem}
If $A$ is a semicommutative skew PBW extension over a ring $R$, then
$R$ is $\Sigma$-semicommutative.
\end{proposition}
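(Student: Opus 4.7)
My plan is to use the freeness of $A$ as a left $R$-module on the standard monomials $\mathrm{Mon}(A)$, combined with the leading-term formula for $x^{\alpha}\cdot b$ that can be read off from Remark \ref{remark-remark2.8}.

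Given $a,b\in R$ with $ab=0$ and a multi-index $\alpha\in\mathbb{N}^n\setminus\{0\}$, I would first observe that since $R\subseteq A$ shares the identity, we have $ab=0$ in $A$. The semicommutativity of $A$ then gives $a\,A\,b=0$. In particular, for every $r\in R$ the element $rx^{\alpha}$ lies in $A$, so
\[
a\,(rx^{\alpha})\,b \;=\; (ar)\,x^{\alpha}\,b \;=\; 0
\]
in $A$.

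Next I would expand $x^{\alpha}b$ in the standard basis using Remark \ref{remark-remark2.8} (applied with $r$ replaced by $b$). Every summand in that expansion except the last one ends with a monomial of strictly smaller total degree than $|\alpha|$ (after pushing the $\delta$-coefficients all the way to the left, which only introduces further monomials of even smaller degree); the last summand is exactly $\sigma^{\alpha}(b)\,x^{\alpha}$. Hence
\[
x^{\alpha}b \;=\; \sigma^{\alpha}(b)\,x^{\alpha} \;+\; \sum_{|\beta|<|\alpha|} c_{\beta}\,x^{\beta}, \qquad c_{\beta}\in R.
\]
Multiplying on the left by $ar\in R$ keeps the monomial $x^{\alpha}$ in the leading position with coefficient $ar\,\sigma^{\alpha}(b)$, and only produces additional terms on monomials $x^{\beta}$ with $|\beta|<|\alpha|$. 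Since $A$ is a free left $R$-module on $\mathrm{Mon}(A)$ and the whole expression equals zero, comparing the coefficient of $x^{\alpha}$ yields $ar\,\sigma^{\alpha}(b)=0$. As $r\in R$ was arbitrary, $aR\sigma^{\alpha}(b)=0$, which is exactly Definition \ref{def-sigma-semicomm}.

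The only subtle step is justifying that $x^{\alpha}b$ has leading coefficient $\sigma^{\alpha}(b)$ on $x^{\alpha}$ and all other standard monomials appearing in the expansion carry strictly smaller total degree. This is the main technical point, but it is essentially a repackaging of Remark \ref{remark-remark2.8}: every $\delta_i$ that appears drops the total degree by one relative to the pure $\sigma$-term, and the subsequent rewriting of the $\delta$-coefficient into standard form (by pushing it past the remaining variables using condition (iii) of Definition \ref{def.skewpbwextensions}) can only introduce further monomials of the same or lower degree, never producing another copy of $x^{\alpha}$. Once this leading-term description is in hand, the rest of the argument is a one-line coefficient comparison.
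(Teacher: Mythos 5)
Your argument is correct. The mechanism is the same one the paper uses --- apply the semicommutativity of $A$ to elements of the form $r\cdot(\text{monomial})$, expand the product in the standard basis, and compare coefficients using the freeness of $A$ as a left $R$-module --- but you execute it at a different level of generality. The paper only ever multiplies by a single variable: from $ab=0$ it gets $aRx_ib=0$, expands $x_ib=\sigma_i(b)x_i+\delta_i(b)$ via Proposition \ref{sigmadefinition}, reads off $ar\sigma_i(b)=0$ from the coefficient of $x_i$, and concludes that $R$ is $\sigma_i$-semicommutative for each $i$; the passage to arbitrary $\sigma^{\alpha}$ is then delegated entirely to Proposition \ref{prop-Sigmasemiciifeach}, which was proved earlier by a purely ring-theoretic induction (composition of semicommutative endomorphisms is semicommutative). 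You instead work with a general monomial $x^{\alpha}$ from the start, which forces you to invoke the full expansion of Remark \ref{remark-remark2.8} and to justify the leading-term claim that $x^{\alpha}b=\sigma^{\alpha}(b)x^{\alpha}+(\text{terms of total degree}<|\alpha|)$ in standard form. That claim is true and your justification of it is sound (each $\delta_i$ consumes one variable, and reduction to standard form never raises total degree), so your proof closes without appealing to Proposition \ref{prop-Sigmasemiciifeach} at all. What the paper's route buys is economy: the degree-one identity is immediate from Proposition \ref{sigmadefinition} and no leading-term analysis is needed. What your route buys is self-containedness with respect to Proposition \ref{prop-Sigmasemiciifeach}, at the cost of carrying the heavier combinatorics of Remark \ref{remark-remark2.8}. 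Either is acceptable; if you keep your version, the leading-term lemma deserves to be stated and proved explicitly rather than left as a remark at the end.
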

\begin{proof}
Let $a,b\in R$ such that $ab=0$. By the semicommutativity of $A$ we
have $aRx_ib=0$, for $1\leq i\leq n$. Proposition
\ref{sigmadefinition} implies that $ar\sigma_i(b)x_i+\delta_i(b)=0$,
for all $r\in R$, whence $aR\sigma_i(b)=0$. This shows that $R$ is
$\sigma_i$-semicommutative, for $1\leq i\leq n$, and by Proposition
\ref{prop-Sigmasemiciifeach}, $R$ is $\Sigma$-semicommutative.
\end{proof}
The following example presents a skew PBW extension over a
semiconmutative ring that is not  semicommutative.
\begin{example}\label{ex-nosemicom}
Let $R=\mathbb{K}[s,t]/\langle st\rangle$ and $\Sigma=\{\sigma_1,
\sigma_2\}$ as in Example \ref{ex-ex2Irano}. Let $\theta_2: R[x_1;
\sigma_1]\to R[x_1; \sigma_1]$ the identity map. Notice that
$\sigma_1$ and $\sigma_2=\theta|_R$ are automorphisms of $R$, whence
$A=R[x_1; \sigma_1][x_2; \theta_2]$ is an iterated skew polynomial
ring of injective type \cite[Example 1.1.5 (iii)]{Fajardoetal2020},
and therefore $A$ is a skew PBW extension over $R=\mathbb{K}[s,
t]/\langle st\rangle$, i.e., $A=\sigma(R)\langle x_1,x_2\rangle$.
According to Proposition \ref{prop-AsemimplRSigmasem}, $A$ is not
semicommutative since $R$ is not $\Sigma$-semicommutative. For
example $\overline{s}\overline{t}=0$ but
$\overline{s}x_1\overline{t}=\overline{s}
\sigma_1(\overline{t})x_1=\overline{s}^2x_1\neq 0$.
\end{example}
According to Rege and Chhawchharia \cite{RegeChhawchharia1997}, a
ring $R$ is called {\em Armendariz} if for elements $f(x) =
\sum_{i=0}^{m} a_ix^{i},\ g(x) = \sum_{j=0}^{l} b_jx^{j}\in R[x]$,
$f(x)g(x) = 0$ implies $a_ib_j = 0$, for all $i, j$. Hirano
\cite{Hirano2002} generalized this notion to quasi-Armendariz rings.
A ring $R$ is called {\em quasi-Armendariz} if for $f(x) =
\sum_{i=0}^{m} a_ix^{i},\ g(x) = \sum_{j=0}^{l} b_jx^{j}$ in $R[x]$,
$f(x)R[x]g(x) = 0$ implies $a_iRb_j = 0$, for all $i, j$. It is well
known that reduced rings are Armendariz and Armendariz rings are
quasi-Armendariz, but the converse are not true in general
\cite[Proposition 2.1]{Rege1997}. Zhao et al.  \cite{Zhao2013}
proved that for a quasi-Armendariz ring $R$, $R$ is reflexive if and
only if $R[x]$ is reflexive if and only if $R[x; x^{-1}]$ is
reflexive. Hashemi et al. \cite{HashemiMoussavi2005} investigated
also a generalization of $\sigma$-rigid rings by introducing the
condition $(SQA1)$ which is a version of quasi-Armendariz rings for
Ore extensions. Let $R[x;\sigma,\delta]$ be a Ore extension of a
ring $R$. We say that $R$ satisfies the $(SQA1)$ condition, if
whenever $f(x)R[x; \sigma, \delta]g(x) = 0$ for $f(x) = \sum_{i=0}^m
a_ix^i$ and $g(x) = \sum_{j=0}^m b_jx^j \in R[x; \sigma, \delta]$,
then $a_iRb_j = 0$, for all $i, j$. Notice that if $R$ is
$\sigma$-rigid, then $R$ satisfies also (SQA1).

\vspace{0.2cm}

In \cite{ReyesSuarez2018RUMA}, the notions $(SA1)$ and $(SQA1)$ are
defined for skew PBW extensions with the aim of generalizing the
results about Baer, quasi-Baer, $p.p.$ and $p.q.$-Baer rings for
$\Sigma$-rigid rings, to the $(\Sigma,\Delta)$-compatible rings. Let
$A$ be a skew PBW extension of $R$. Following \cite[Definition
4.1]{ReyesSuarez2018RUMA}, we say that $R$ satisfies the condition
({\em SA1}) if whenever $fg = 0$ for $f=a_0+a_1X_1+\dotsb + a_mX_m$
and $g=b_0 + b_1Y_1 + \dotsb + b_tY_t$ elements of $A$, then $a_ib_j
= 0$, for every $i, j$. Now, from \cite[Definition
4.11]{ReyesSuarez2018RUMA}, we say that $R$ satisfies the ({\em
SQA1}) condition if whenever $fAg=0$ for $f=a_0+a_1X_1+\dotsb +
a_mX_m$ and $g=b_0 + b_1Y_1 + \dotsb + b_tY_t$ elements of $A$, then
$a_iRb_j = 0$, for every $i, j$. By \cite[Definition
6.1.2]{Fajardoetal2020}, we say that $R$ is a $(\Sigma,
\Delta)$\emph{-Armendariz} ring if for polynomials $f = a_0 + a_1X_1
+ \dotsb + a_mX_m$ and $g= b_0 + b_1Y_1 + \dotsb + b_tY_t$ in $A$,
the equality $fg=0$ implies $a_iX_ib_jY_j=0$, for every $i, j$. We
say that $R$ is an $(\Sigma, \Delta)$\emph{-weak Armendariz }ring,
if for linear polynomials $f = a_0 + a_1x_1 + \dotsb + a_nx_n$ and
$g= b_0 + b_1x_1 + \dotsb + b_nx_n$ in $A$, the equality $fg=0$
implies $a_ix_ib_jx_j=0$, for every $i, j$. $R$ is called a
$\Sigma$\emph{-skew Armendariz} ring, if for elements
$f=\sum_{i=0}^{m}a_iX_i$ and $g = \sum_{j=0}^{t} b_jY_j$ in $A$, the
equality $fg=0$ implies $a_i\sigma^{\alpha_i}(b_j)=0$,  for all
$0\le i\le m$ and $0\le j\le t$, where $\alpha_i={\rm exp}(X_i)$.
$R$ is called a \emph{weak $\Sigma$-skew Armendariz} ring, if for
elements $f=\sum_{i=0}^{n} a_ix_i$ and $g = \sum_{j=0}^{n} b_jx_j$
in $A$ $(x_0:=1)$, the equality $fg=0$ implies $a_i\sigma_i(b_j)=0$,
for all $0\le i, j \le n$ $(\sigma_0:={\rm id}_R)$ \cite[Definition
6.1.3]{Fajardoetal2020}. In \cite{ReyesSuarez2018-3} the authors
defined skew-Armendariz rings and studied some properties of skew
PBW extensions over skew-Armendariz rings. We say that $R$ is a
\emph{skew-Armendariz} ring, if for polynomials
$f=\sum_{i=0}^{m}a_iX_i$ and $g = \sum_{j=0}^{t} b_jY_j$ in $A$, $fg
= 0$ implies $a_0b_k = 0$, for each $0\leq k \leq t$
\cite[Definition 4.1]{ReyesSuarez2018-3}.

\vspace{0.2cm}

From the definitions above we can establish the following relations
\cite[p. 3204]{ReyesSuarez2018-3}:

{\small{\begin{equation*} \Sigma{\rm-rigid}\ \subsetneqq\
        (\Sigma, \Delta){\rm -Armendariz}\ \subsetneqq\ \Sigma{\rm-skew\
            Armendariz} \ \subsetneqq\ {\rm skew-Armendariz},
        \end{equation*}}}
{\small {\begin{equation*} \Sigma{\rm-rigid}\ \subsetneqq\ (\Sigma,
        \Delta){\rm -weak\ Armendariz}\ \subsetneqq\ {\rm weak}\
        \Sigma{\rm-skew\ Armendariz}.
        \end{equation*}}}
The following theorem relates $\Sigma$-semicommutative rings to
another special class of rings such as reduced, $\Sigma$-rigid,
$(\Sigma,\Delta)$-Armendariz, $(\Sigma,\Delta)$-compatible and
skew-Armendariz which have been studied in \cite{Fajardoetal2020,
HashemiKhalilAlhevaz2017, ReyesSuarez2018-3, ReyesSuarez2018RUMA,
ReyesSuarez2019-1, ReyesSuarez2019-2, ReyesSuarez2019Radicals,
ReyesSuarez2021Discr}.
\begin{theorem}\label{teo-compilSigma-semic}
If $A=\sigma(R)\langle x_1, \dots, x_n\rangle$ is a skew PBW
extension over $R$, then the following statements are equivalent:
\begin{enumerate}
\item[\rm (1)] $R$ is $\Sigma$-semicommutative and reduced.
\item[\rm (2)] $R$ is $\Sigma$-rigid.
\item[\rm (3)] $R$ is $(\Sigma,\Delta)$-Armendariz and reduced.
\item[\rm (4)] $A$ is reduced.
\item[\rm (5)] $R$ is reduced and $(\Sigma,\Delta)$-compatible.
\item[\rm (6)] $R$ is reduced and skew-Armendariz.
\end{enumerate}
\end{theorem}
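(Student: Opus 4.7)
The equivalence (1)~$\Leftrightarrow$~(2) is already at hand: it is exactly Theorem~\ref{teo-Sigmarigidiffreduc}. For the remaining equivalences among (2)--(6), the plan is to close the cycle
\[
(2) \Longrightarrow (4) \Longrightarrow (5) \Longrightarrow (3) \Longrightarrow (6) \Longrightarrow (2),
\]
assembling several results largely contained in the authors' earlier work on skew PBW extensions.

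For (2)~$\Rightarrow$~(4), I plan to expand an arbitrary $f = \sum a_i X_i$ with $f^2 = 0$ in $A$ using the commutation formula of Remark~\ref{remark-remark2.8}; the leading-term calculation produces $a_i \sigma^{\alpha_i}(a_i) = 0$ for each $i$, which by $\Sigma$-rigidness forces $a_i = 0$, so $A$ is reduced. For (4)~$\Rightarrow$~(5), note first that $R$ is reduced as a subring of $A$; since reduced rings are semicommutative, $ab = 0$ in $R$ gives $a x_i b = 0$ in $A$, so expanding $x_i b = \sigma_i(b) x_i + \delta_i(b)$ and invoking the uniqueness of expression in the free $R$-basis $\mathrm{Mon}(A)$ yields both $a\sigma_i(b) = 0$ and $a\delta_i(b) = 0$; an iteration produces the full $(\Sigma,\Delta)$-compatibility.

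For (5)~$\Rightarrow$~(3), I rely on the standard fact that a reduced $(\Sigma,\Delta)$-compatible ring is $(\Sigma,\Delta)$-Armendariz, as shown in \cite{HashemiKhalilAlhevaz2017, ReyesSuarez2018RUMA}. The step (3)~$\Rightarrow$~(6) is then immediate from the chain of strict inclusions
\[
\Sigma\text{-rigid} \subsetneqq (\Sigma,\Delta)\text{-Armendariz} \subsetneqq \Sigma\text{-skew Armendariz} \subsetneqq \text{skew-Armendariz}
\]
recalled just before the theorem statement. Finally, for (6)~$\Rightarrow$~(2), assuming $R$ is reduced and skew-Armendariz and that $a\sigma^\alpha(a) = 0$ for some $a\in R$ and $\alpha\in\mathbb{N}^n$, the plan is to engineer polynomials $f,g\in A$ whose product vanishes precisely by virtue of this hypothesis and for which the skew-Armendariz conclusion $a_0 b_k = 0$ specializes to $a\cdot a = 0$; reducedness of $R$ then forces $a = 0$, giving $\Sigma$-rigidness. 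The main obstacle is this last implication: the pair $(f,g)$ has to be built from $a$ and the standard monomial $x^\alpha$ using Remark~\ref{remark-remark2.8} in such a way that the lower-order $\delta$-terms really do cancel --- a step typically requiring a preliminary bootstrap of partial $\Delta$-compatibility from reducedness alone.
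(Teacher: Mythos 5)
Your route differs substantially from the paper's: the authors prove only (1) $\Leftrightarrow$ (2) themselves (via Theorem \ref{teo-Sigmarigidiffreduc}) and dispose of everything else by citation --- (2) $\Leftrightarrow$ (3) $\Leftrightarrow$ (4) is \cite[Corollary 6.1.10]{Fajardoetal2020}, (4) $\Leftrightarrow$ (5) is \cite[Theorem 3.9]{ReyesSuarez2018RUMA}, and (6) $\Leftrightarrow$ (2) is \cite[Theorem 4.4]{ReyesSuarez2018-3} --- whereas you attempt a self-contained cycle. That is a legitimate and potentially more informative plan, but as written it has two genuine gaps.

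First, in (4) $\Rightarrow$ (5) you verify only half of $\Sigma$-compatibility. The definition requires $ab=0$ \emph{if and only if} $a\sigma^{\alpha}(b)=0$; your argument ($ab=0$, $A$ reduced hence semicommutative, expand $ax_ib=0$ in the free basis ${\rm Mon}(A)$) yields $a\sigma_i(b)=0$ and $a\delta_i(b)=0$, i.e.\ the forward implication together with $\Delta$-compatibility, but says nothing about why $a\sigma_i(b)=0$ forces $ab=0$. That converse is not formal: the usual proof passes through $\Sigma$-rigidity of $R$ (for instance $a\sigma(b)=0\Rightarrow\sigma(b)a=0\Rightarrow (ba)\sigma(ba)=b\,(a\sigma(b))\,\sigma(a)=0\Rightarrow ba=0\Rightarrow ab=0$), so you would first need (4) $\Rightarrow$ (2), which your cycle does not make available at that point. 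Second, (6) $\Rightarrow$ (2) is not actually proved: you describe the polynomials $f,g$ you would like to build from $a$ and $x^{\alpha}$ and candidly note that the cancellation of the $\delta$-terms ``requires a preliminary bootstrap of partial $\Delta$-compatibility from reducedness alone,'' but that bootstrap is precisely the hard content of \cite[Theorem 4.4]{ReyesSuarez2018-3} and is left undone; until $f$ and $g$ are exhibited and $fg=0$ verified, the skew-Armendariz hypothesis cannot be invoked. (A smaller point: in (2) $\Rightarrow$ (4) the leading-term computation gives $a_m\sigma^{\alpha_m}(a_m)=0$ only for the top monomial; one then needs a downward induction rather than obtaining $a_i\sigma^{\alpha_i}(a_i)=0$ for all $i$ at once.) Either complete these two steps or, as the paper does, replace them by the citations above.
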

\begin{proof}
(1) $\Leftrightarrow$ (2) It follows from Theorem
\ref{teo-Sigmarigidiffreduc}.

(2) $\Leftrightarrow$ (3) $\Leftrightarrow$ (4) The assertion is a
direct consequence of \cite[Corollary 6.1.10]{Fajardoetal2020}.

(4) $\Leftrightarrow$ (5) It follows from \cite[Theorem
3.9]{ReyesSuarez2018RUMA}.

(6) $\Leftrightarrow$ (2) It is a corollary from \cite[Theorem
4.4]{ReyesSuarez2018-3}.
\end{proof}
\begin{lemma}\label{lema-propidempot}
If $A$ is a skew PBW extension over a $\Sigma$-semicommutative ring
$R$ and $e$ is an idempotent of $R$, then the following assertions
hold:
\begin{enumerate}
\item[\rm (1)] $\delta^{\beta}(e)=0$, for every $\beta\in \mathbb{N}\setminus \{0\}$.
\item[\rm (2)] $x^{\alpha}e=ex^{\alpha}$, for all $x^{\alpha}\in {\rm Mon}(A)$.
\item[\rm (3)] $fe=ef$, for all $f\in A$, i.e., $Ae=eA$ and thus $eA$ is an ideal of $A$.
\end{enumerate}
\end{lemma}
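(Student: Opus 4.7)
The plan is to leverage three already-established tools: Lemma~\ref{lema-Sigmaidem}(2) gives $\sigma^{\alpha}(e)=e$ for every $\alpha\in\mathbb{N}^n$; Proposition~\ref{prop-sigmaimplAbel} guarantees that the idempotent $e$ lies in the center of $R$; and Remark~\ref{remark-remark2.8} provides the explicit expansion of $x^{\alpha}r$ as a sum of $\delta$-terms plus the leading term $\sigma^{\alpha}(r)x^{\alpha}$. With these in hand, (1) will kill every $\delta$-term built from $e$, (2) will reduce the Remark~\ref{remark-remark2.8} expansion applied to $r=e$ to its leading monomial, and (3) will follow by $R$-linearity.

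For (1), I would first verify the one-step case $\delta_i(e)=0$ for each $i$. Applying the $\sigma_i$-derivation rule to $e=e\cdot e$ and using $\sigma_i(e)=e$ gives
\[
\delta_i(e)=\sigma_i(e)\delta_i(e)+\delta_i(e)e=e\delta_i(e)+\delta_i(e)e.
\]
Since $e$ is central, the right-hand side equals $2e\delta_i(e)$; left-multiplying by $e$ collapses this to $e\delta_i(e)=0$, and substituting back forces $\delta_i(e)=0$. A short induction then extends this to $\delta^{\beta}(e)=0$ for every nonzero $\beta\in\mathbb{N}^n$: writing $\delta^{\beta}=\delta_1^{\beta_1}\circ\cdots\circ\delta_n^{\beta_n}$ and letting $k$ be the largest index with $\beta_k>0$, already $\delta_k^{\beta_k}(e)=\delta_k^{\beta_k-1}(\delta_k(e))=0$, which propagates outward through the composition.

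For (2), I would substitute $r=e$ into Remark~\ref{remark-remark2.8}; by Lemma~\ref{lema-Sigmaidem}(2) every inner $\sigma$-composition applied to $e$ still equals $e$, so each $\delta$-term in the expansion has the form $(\cdots)\,\delta_k(e)\,(\cdots)$ and vanishes by~(1). Only the leading term $\sigma^{\alpha}(e)x^{\alpha}=ex^{\alpha}$ survives, yielding $x^{\alpha}e=ex^{\alpha}$. Part~(3) is then immediate: for $f=\sum_j a_jX_j\in A$,
\[
fe=\sum_j a_j(X_je)=\sum_j a_j e X_j=\sum_j e a_j X_j=ef,
\]
using~(2) for the second equality and centrality of $e$ in $R$ for the third; hence $Ae=eA$, and $A\cdot eA=(Ae)A=(eA)A\subseteq eA$ shows that $eA$ is a two-sided ideal. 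The only genuine computation is the short centrality argument in part~(1); the remaining work is mechanical propagation through Remark~\ref{remark-remark2.8}.
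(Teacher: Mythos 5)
Your proof is correct and follows essentially the same route as the paper's: part (1) via the $\sigma_i$-derivation rule on $e=e\cdot e$ together with $\sigma_i(e)=e$ and centrality of $e$, part (2) by substituting into the expansion of Remark~\ref{remark-remark2.8} and killing the $\delta$-terms, and part (3) by $R$-linearity and the Abelian property. You are in fact slightly more explicit than the paper on the step from $\delta_i(e)=2e\delta_i(e)$ to $\delta_i(e)=0$ (left-multiplying by $e$ first), which is a welcome clarification rather than a deviation.
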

\begin{proof}
\begin{enumerate}
\item [\rm (1)] Let $A=\sigma(R)\langle x_1, \dots, x_n\rangle$ be a skew PBW extension and $(\Sigma, \Delta)$ the system of endomorphisms and $\Sigma$-derivations of $R$ with respect to $A$, as in Remark \ref{rem-notacionSigmaDelta}. Suppose  that $R$ is $\Sigma$-semicommutative and let $e$ be an idempotent of $R$.
If $\delta_i\in \Delta$, then
$\delta_i(e)=\delta_i(ee)=\sigma_i(e)\delta_i(e) +\delta_i(e)e$,
since $\delta_i$ is a $\sigma_i$-derivation. By Lemma
\ref{lema-Sigmaidem} we have that $\sigma_i(e)=e$ and from
Proposition \ref{prop-sigmaimplAbel} we have that
$\delta_i(e)e=e\delta_i(e)$. Then $\delta_i(e)=2e\delta_i(e)$ and
thus $\delta_i(e)=0$. Since $\delta_i$ is arbitrary, then
$\delta^{\beta}(e)=\delta_1^{\beta_1}\dots \delta_n^{\beta_n}(e)=0$,
for every $(\beta_1,\dots,\beta_n)=\beta\in \mathbb{N}\setminus
\{0\}$.
\item [\rm (2)] Let $x^{\alpha}=x_1^{\alpha_1}x_2^{\alpha_2}\dotsb x_{n-1}^{\alpha_{n-1}}x_n^{\alpha_n}\in {\rm Mon}(A)$. Since $\sigma^{\alpha}(e)=e$ for each $\alpha\in \mathbb{N}^n\setminus \{0\}$ (for Lemma \ref{lema-Sigmaidem} (2)) and $\delta_i(e)=0$, for $1\leq i\leq n$ (due to (1)), then $\delta_i(\sigma^{\gamma}(e))=\delta_i(e)=0$, for each $\gamma\in \mathbb{N}^n\setminus \{0\}$ and $1\leq i\leq n$. Thus, by Remark \ref{remark-remark2.8} we have that
$x^{\alpha}(e)=x_1^{\alpha_1}x_2^{\alpha_2}\dotsb
x_{n-1}^{\alpha_{n-1}}x_n^{\alpha_n}e=
(\sigma_n^{\alpha_n}(e))))x_1^{\alpha_1}\dotsb
x_n^{\alpha_n}=\sigma^{\alpha}(e)x^{\alpha}= ex^{\alpha}$.
\item [\rm (3)] Let $f=\sum_{k=0}^ma_kX_k\in A$, with $X_k\in {\rm Mon}(A)$ and $a_k\in R$, for $1\leq k\leq m$. Then $fe=\sum_{k=0}^m(a_kX_ke)$. By (2) above we have $X_ke=eX_k$, for $1\leq k\leq m$. Thus, $a_kX_ke=a_keX_k$, for $1\leq k\leq m$, and by Proposition \ref{prop-sigmaimplAbel} we know that $R$ is Abelian, so $a_keX_k=ea_kX_k$ for $1\leq k\leq m$. Therefore $fe=ef$.
\end{enumerate}
\end{proof}
\begin{proposition}\label{prop-anulad}
If $R$ is a $\Sigma$-semicommutative Baer ring $R$ for
$\Sigma=\{\sigma_1,\dots,\sigma_n\}$ a family of endomorphisms of
$R$, and $\Delta=\{\delta_i,\dots,\delta_n\}$ is a family of
$\sigma_i$-derivations for each $\delta_i$, then for all $\alpha,
\beta\in \mathbb{N}^n$ and every non-empty subset $S$ of $R$, the
following assertions hold:
\begin{enumerate}
\item[\rm (1)] $l_R(S)=l_R(\sigma^{\alpha}(S))$.
\item[\rm (2)] $l_R(S)\subseteq l_R(\delta^{\beta}(S))$.
\item[\rm (3)] $\sigma_i(l_R(S))\subseteq l_R(S)$.
\item[\rm (4)] $\delta_i(l_R(S))\subseteq l_R(S)$.
\end{enumerate}
\end{proposition}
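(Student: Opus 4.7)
The strategy rests on exploiting the Baer hypothesis to write $l_R(S)=Re$ for some idempotent $e\in R$, and then appealing to two structural facts available under the $\Sigma$-semicommutative hypothesis: (a) $\sigma^{\alpha}(e)=e$ for every idempotent $e$ and every $\alpha\in\mathbb{N}^n$ (Lemma \ref{lema-Sigmaidem}(2)), and (b) $\delta^{\beta}(e)=0$ for every idempotent $e$ and every $\beta\in\mathbb{N}^n\setminus\{0\}$. Note that (b) is the content of Lemma \ref{lema-propidempot}(1), and a quick inspection of its proof shows that it uses only the $\Sigma$-semicommutativity of $R$ and the Abelian property from Proposition \ref{prop-sigmaimplAbel}, so it is available to us directly (rather than requiring the skew PBW extension context). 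Also, all $\sigma_i\in\Sigma$ are injective by the convention fixed just before Lemma \ref{lema-Sigmaidem}, and hence each $\sigma^{\alpha}$ is injective.

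For (1), the inclusion $l_R(S)\subseteq l_R(\sigma^{\alpha}(S))$ is immediate from $\Sigma$-semicommutativity: if $as=0$ for every $s\in S$, then $aR\sigma^{\alpha}(s)=0$, and in particular $a\sigma^{\alpha}(s)=0$. For the reverse inclusion I would invoke the Baer property to write $l_R(\sigma^{\alpha}(S))=Rf$ with $f^2=f$. By (a), $\sigma^{\alpha}(f)=f$, so the equation $f\sigma^{\alpha}(s)=0$ rewrites as $\sigma^{\alpha}(fs)=0$; the injectivity of $\sigma^{\alpha}$ then forces $fs=0$ for every $s\in S$, i.e., $f\in l_R(S)$, whence $Rf\subseteq l_R(S)$.

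For (2), write $l_R(S)=Re$ and show $e\in l_R(\delta^{\beta}(S))$ (since $l_R(\delta^{\beta}(S))$ is a left ideal, this already gives $Re\subseteq l_R(\delta^{\beta}(S))$). The case $\beta=0$ is trivial. For $|\beta|=1$, i.e., $\delta^{\beta}=\delta_i$, the Leibniz rule together with (a) and (b) gives
\[
0=\delta_i(es)=\sigma_i(e)\delta_i(s)+\delta_i(e)s=e\delta_i(s),
\]
so $\delta_i(s)\in r_R(e)$ (understood elementwise). Induction on $|\beta|$, applied componentwise, then extends this to $e\delta^{\beta}(s)=0$ for every $\beta\in\mathbb{N}^n$.

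Parts (3) and (4) follow by a one-line computation from $l_R(S)=Re$: for any $r\in R$,
\[
\sigma_i(re)=\sigma_i(r)\sigma_i(e)=\sigma_i(r)e\in Re,\qquad \delta_i(re)=\sigma_i(r)\delta_i(e)+\delta_i(r)e=\delta_i(r)e\in Re,
\]
using (a) and (b) in the respective middle equalities. The main obstacle, such as it is, is simply to justify that the identity $\delta_i(e)=0$ (stated in Lemma \ref{lema-propidempot}(1) inside a skew PBW context) is legitimately available under the current hypotheses; this is a minor matter, since that proof never uses anything beyond $\Sigma$-semicommutativity and the Abelian property of $R$.
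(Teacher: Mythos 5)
Your proof is correct and follows essentially the same route as the paper's: both rest on writing the annihilator as $Re$ via the Baer hypothesis and then invoking $\sigma^{\alpha}(e)=e$, $\delta^{\beta}(e)=0$, and the injectivity of $\sigma^{\alpha}$; your observation that Lemma \ref{lema-propidempot}(1) only needs $\Sigma$-semicommutativity is sound and is implicitly what the paper does too. Your treatment of (4) is a mild streamlining --- computing $\delta_i(re)=\delta_i(r)e$ directly on the generator instead of chaining parts (2) and (3) as the paper does --- but the underlying ideas are identical.
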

\begin{proof}
By assumption $R$ is Baer, so there exist an idempotent element $e$
in $R$ such that $l_R(S)=Re$.
\begin{enumerate}
\item[\rm (1)] By the Baer property of $R$, for the nonempty subset $\sigma_1^{\alpha_1}\cdots\sigma_n^{\alpha_n}(S)=\sigma^{\alpha}(S)$, there exist an idempotent element $e'$ in $R$ such that $l_R(\sigma^{\alpha}(S))=Re'$. Since $eS=0$ and $R$ is $\Sigma$-semicommutative, then $eR\sigma^{\alpha}(S)=0$ for $\alpha\in \mathbb{N}^n\setminus \{0\}$, and so $e\sigma^{\alpha}(S)=0$. Thus $e\in l_R(\sigma^{\alpha}(S))=Re'$ and $e=ee'=e'e$. By Lemma \ref{lema-Sigmaidem} (2) we have that $\sigma_1^{\alpha_1}\cdots\sigma_n^{\alpha_n}(e')=\sigma^{\alpha}(e')=e'$, thus $0=e'\sigma^{\alpha}(S)=\sigma^{\alpha}(e')\sigma^{\alpha}(S)=\sigma^{\alpha}(e'S)$ implies that $e'S=0$, since $\sigma^{\alpha}$ is a monomorphism of $R$. Then $e'\in l_R(S)$ and $e'=e'e=ee'$. Therefore $e=e'$ and so $l_R(S)=Re=Re'=l_R(\sigma^{\alpha}(S))$.
\item [\rm (2)] Let $a\in l_R(S)=Re$, then $a=re$ for some $r\in R$. By Proposition \ref{prop-sigmaimplAbel} we have that $R$ is Abelian, so $a=er=re$. Since $e\in l_R(S)$ then  $es=0$, for all $s\in S$. Thus $0 = \delta_2(es)=\sigma_2(e)\delta_2(s)+\delta_2(e)s=e\delta_2(s)$, since $\sigma_2(e)=e$ (by Lemma \ref{lema-Sigmaidem} (2)) and $\delta_2(e)=0$ (by Lemma \ref{lema-propidempot} (1)). Thus $ 0 = \delta_1(e\delta_2(s))=\sigma_1(e)\delta_1(\delta_2(s))+\delta_1(e)\delta_2(s)$. Again, Lemma \ref{lema-Sigmaidem} (2) and Lemma \ref{lema-propidempot} (1) imply that $e\delta_1\delta_2(s)=0$, for all $s\in S$. Continuing with this procedure, even repeating subindexes, we have that $e\delta_1^{\beta_1}\cdots \delta_n^{\beta_n}(S)=e\delta^{\beta}(S)=0$ for all $\beta\in \mathbb{N}^n\setminus \{0\}$. Then $0 = re\delta^{\beta}(S)=a\delta^{\beta}(S)$, and therefore $a\in l_R(\delta^{\beta}(S)$ for all $\beta\in \mathbb{N}^n\setminus \{0\}$.
\item [\rm (3)] The fact $r\in l_R(S)$ (thus $\sigma_i(r)\in \sigma_i(l_R(S)$) implies that $rs=0$, for all $s\in S$, and so $\sigma_i(rs)=\sigma_i(r)\sigma_i(s)=0$, whence $\sigma_i(r)\in l_R(\sigma_i(S))=l_R(S)$, by (1).
\item [\rm (4)] Since $\delta_i$ is a $\sigma_i$ derivation, we have that $\delta_i(rs)=\sigma_i(r)\delta_i(s)+\delta_i(r)s$. Thus, $rs=0$ implies $\sigma_i(r)\delta_i(s)=-\delta_i(r)s$. Therefore, $r\in l_R(S)$ implies that  $rs=0$ for all $s\in S$, and so $\sigma_i(r)\delta_i(s)=-\delta_i(r)s$. Now, from (2), $rs=0$ implies  $r\delta_i(s)=0$, and thus $r\in l_R(\delta_i(S))$, and from (3), $\sigma_i(r)\in l_R(\delta_i(S))$. In this way, $\sigma_i(r)\delta_i(s)=0$, for all $s\in S$, and so  $\delta_i(r)s=0$, i. e., $\delta_i(r)\in l_R(S)$.
\end{enumerate}
\end{proof}
\begin{proposition}\label{prop-SemBaerimplskewArm}
If $A$ is a skew PBW extension over a $\Sigma$-semicommutative Baer
ring $R$, then $R$ is reduced.
\end{proposition}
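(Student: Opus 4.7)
The plan is to reduce the statement to the classical fact that an Abelian Baer ring is reduced. The existence of the skew PBW extension $A$ enters the argument only through the fact that it pins down the family $\Sigma$ of endomorphisms with respect to which $R$ is assumed $\Sigma$-semicommutative; neither the $\Sigma$-derivations nor the multiplication rules of $A$ will play a direct role.

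First I would apply Proposition \ref{prop-sigmaimplAbel} to conclude that $R$ is Abelian, i.e., every idempotent of $R$ is central. Then I would observe that in order to show $R$ is reduced, it suffices to prove that $a^2 = 0$ implies $a = 0$ for all $a \in R$: if some $a \in R$ satisfied $a^n = 0$ with minimal $n \geq 2$, then setting $m = \lceil n/2 \rceil$ gives $2m \geq n$, so $(a^m)^2 = a^{2m} = 0$; the square-zero implication would force $a^m = 0$, contradicting the minimality of $n$ since $m < n$.

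For the square-zero case, fix $a \in R$ with $a^2 = 0$ and apply the Baer hypothesis to the singleton $\{a\}$: there exists an idempotent $e \in R$ such that $l_R(\{a\}) = Re$. Since $a \cdot a = 0$, one has $a \in l_R(\{a\}) = Re$, so writing $a = se$ gives $ae = se^2 = se = a$. On the other hand, $e \in Re = l_R(\{a\})$ yields $ea = 0$. Because $R$ is Abelian, the idempotent $e$ is central, so $ae = ea$, and therefore $a = ae = ea = 0$, as desired.

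The main point — rather than an obstacle — is recognizing that the $\Sigma$-semicommutative hypothesis is used here only to extract Abelianness via Proposition \ref{prop-sigmaimplAbel}; once that is in hand, the proof is essentially the standard annihilator–idempotent argument that every Abelian Baer ring is reduced. The skew PBW extension $A$ itself, along with the family $\Delta$ of $\Sigma$-derivations and any compatibility conditions, is not needed at this step, which is why the conclusion can be drawn without further computations involving elements of $A$.
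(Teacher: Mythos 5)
Your proof is correct and follows essentially the same route as the paper: both derive Abelianness from Proposition \ref{prop-sigmaimplAbel} and then invoke the fact that Abelian Baer rings are reduced. The only difference is that the paper cites this last fact from the literature (Birkenmeier, Park and Rizvi), whereas you supply the standard annihilator--idempotent argument for it explicitly, which is a valid and complete substitute.
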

\begin{proof}
By Proposition \ref{prop-sigmaimplAbel} we know that $R$ is Abelian,
and so reduced (recall that Abelian Baer rings are reduced
\cite{Birkenmeier2013}).
\end{proof}
\begin{corollary}\label{cor-relatrigid}
If $A$ is a skew PBW extension over a $\Sigma$-semicommutative Baer
ring $R$, then the following statements hold:
\begin{enumerate}
\item[\rm (1)] $R$ is $\Sigma$-rigid.
\item[\rm (2)] $A$ is reduced, and therefore $A$ is abelian.
\item[\rm (3)] $R$ is $\Sigma$-skew Armendariz.
\item[\rm (4)] $R$ is $(\Sigma, \Delta)$-compatible.
\item [\rm (5)] $R$ is weak symmetric if and only if $A$ is weak symmetric.
\item [\rm (6)] Both $R$ and $A$ are reflexive.
\item [\rm (7)] $R$ is $\Sigma$-skew RNP.
\end{enumerate}
\end{corollary}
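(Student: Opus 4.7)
The strategy is to first reduce everything to the fact that $R$ is reduced, and then cascade through the equivalences already established. Observe that Proposition \ref{prop-SemBaerimplskewArm} immediately gives that $R$ is reduced. This is the single key input from which the rest of the assertions will be pulled out of the previously proved theorems.

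For (1), since $R$ is $\Sigma$-semicommutative and reduced, Theorem \ref{teo-Sigmarigidiffreduc} yields that $R$ is $\Sigma$-rigid. Items (2), (3) and (4) then follow at once from Theorem \ref{teo-compilSigma-semic}: the equivalence of (2) and (4) in that theorem gives $A$ reduced (and so abelian, since reduced rings are abelian); the chain of inclusions $\Sigma\text{-rigid} \subsetneq (\Sigma,\Delta)\text{-Armendariz} \subsetneq \Sigma\text{-skew Armendariz}$ recalled just before Theorem \ref{teo-compilSigma-semic} produces the $\Sigma$-skew Armendariz property for (3); and the equivalence of (2) and (5) in the same theorem yields the $(\Sigma,\Delta)$-compatibility for (4).

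For (6), since both $R$ and $A$ are reduced, they are reflexive by a standard direct argument: if $aRb = 0$ in a reduced ring then $(bra)^{2} = br(ab)ra = 0$ for every $r$, so $bra = 0$, i.e.\ $bRa = 0$. For (7), reducedness of $R$ means $N(R) = \{0\}$, so the defining implication of the right (and left) $\Sigma$-skew RNP property becomes vacuous and $R$ is trivially $\Sigma$-skew RNP. Finally, for (5), having established that $R$ is reduced and $(\Sigma,\Delta)$-compatible, the equivalence $R$ weak symmetric $\Leftrightarrow$ $A$ weak symmetric is a direct application of the characterization of weak symmetric skew PBW extensions obtained in \cite{ReyesSuarez2021Discr}.

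The only nontrivial issue is that (5) relies on an external theorem (and on the hypotheses of that theorem being covered by our setting), whereas (1)--(4), (6) and (7) are essentially bookkeeping applications of Proposition \ref{prop-SemBaerimplskewArm} and Theorem \ref{teo-compilSigma-semic}. Thus the whole corollary amounts to noting that a $\Sigma$-semicommutative Baer ring is automatically reduced, after which every statement unfolds from already proved machinery.
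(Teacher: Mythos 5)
Your proposal is correct, and for items (1)--(5) it follows essentially the same route as the paper: reduce everything to the observation (Proposition \ref{prop-SemBaerimplskewArm}) that a $\Sigma$-semicommutative Baer ring is reduced, then invoke Theorem \ref{teo-Sigmarigidiffreduc}, Theorem \ref{teo-compilSigma-semic}, the displayed chain of implications for the Armendariz-type conditions, and the external characterization of weak symmetry from \cite{ReyesSuarez2021Discr}. Where you genuinely diverge is in (6) and (7): the paper disposes of both by citing \cite[Proposition 4.12]{SuarezHigueraReyes2021} together with $\Sigma$-rigidity, whereas you give self-contained elementary arguments --- for (6) the standard computation $(bra)^2 = br(ab)ra = 0$ showing that reduced rings are reflexive (applied to both $R$ and $A$, which are reduced by this point), and for (7) the observation that $N(R)=\{0\}$ makes the defining implication of the right and left $\Sigma$-skew RNP property hold trivially. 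Both of your arguments check out, and they buy independence from the external preprint at the cost of a few extra lines; the paper's citations buy brevity and keep the corollary uniformly phrased as a consequence of $\Sigma$-rigidity.
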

\begin{proof}
\begin{enumerate}
\item[\rm (1)] From Proposition \ref{prop-SemBaerimplskewArm} we have that $R$ is reduced. Thus, by Theorem \ref{teo-compilSigma-semic} we have that $R$ is $\Sigma$-rigid.
\item[\rm (2)] $A$ is reduced by (1) and Theorem \ref{teo-compilSigma-semic}. Now, reduced rings are easily shown to be abelian.
\item[\rm (3)] It follows from (2) and \cite[Corollary 6.1.10]{Fajardoetal2020}.
\item[\rm (4)] Since $R$ is $\Sigma$-rigid, the assertion is a consequence of \cite[Theorem 3.9]{ReyesSuarez2018RUMA}.
\item[\rm (5)] It follows from (1) and \cite[Corollary 1]{ReyesSuarez2021Discr}.
\item[\rm (6)] The assertion is a consequence of (1) and \cite[Proposition 4.12]{SuarezHigueraReyes2021}.
\item[\rm (7)] Again, the assertion follows from item (1) above and \cite[Proposition 4.12]{SuarezHigueraReyes2021}.
\end{enumerate}
\end{proof}
\begin{theorem}\label{teo-equivBaer}
If $A = \sigma(R)\langle x_1,\dotsc, x_n\rangle$ is a skew PBW
extension over a $\Sigma$-semicommutative and $\Delta$-compatible
ring $R$, then the following statements are equivalent:
\begin{enumerate}
\item[\rm (1)] $R$ is Baer.
\item[\rm (2)] $R$ is quasi-Baer.
\item[\rm (3)] $A$ is quasi-Baer.
\item[\rm (4)] $A$ is Baer.
\end{enumerate}
\end{theorem}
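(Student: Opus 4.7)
The plan is to establish the cycle of implications
$(1)\Rightarrow(2)\Rightarrow(1)$ and $(1)\Rightarrow(4)\Rightarrow(3)\Rightarrow(2)$, leveraging the strong consequences of being $\Sigma$-semicommutative and Baer that were already packaged in Corollary \ref{cor-relatrigid}. The implications $(1)\Rightarrow(2)$ and $(4)\Rightarrow(3)$ are definitional and require no work.

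For $(2)\Rightarrow(1)$: Proposition \ref{prop-sigmaimplAbel} gives that $R$ is Abelian. It is classical that an Abelian quasi-Baer ring is Baer (every idempotent is central, so the right annihilator of a subset $S$ coincides with the right annihilator of the right ideal $SR$, which is generated by an idempotent by the quasi-Baer hypothesis). This yields Baerness of $R$.

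For $(1)\Rightarrow(4)$: By Corollary \ref{cor-relatrigid}(1), $R$ is $\Sigma$-rigid, and by item (4) of the same corollary it is $(\Sigma,\Delta)$-compatible; Proposition \ref{prop-SemBaerimplskewArm} further ensures $R$ is reduced. Under these hypotheses, the known transfer result for skew PBW extensions over $\Sigma$-rigid Baer rings (see \cite{ReyesSuarez2018-3} and \cite{Fajardoetal2020}) applies directly to conclude that $A$ is Baer.

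The core of the argument is $(3)\Rightarrow(2)$, and this is where I expect the main obstacle. Let $I$ be a right ideal of $R$ and consider the right ideal $IA$ of $A$. Quasi-Baerness of $A$ produces an idempotent $\varepsilon\in A$ with $r_A(IA)=\varepsilon A$. The goal is to show $\varepsilon\in R$ and $r_R(I)=\varepsilon R$. Write $\varepsilon=e_0+e_1X_1+\dotsb+e_mX_m$ with $e_k\in R$ and $X_k\in\mathrm{Mon}(A)$. From $I\varepsilon=0$, equivalently $a\varepsilon=0$ for every $a\in I$, expand using the multiplication formula of Remark \ref{remark-remark2.8}: by comparing the leading monomial coefficients and then descending on the ordered monomial basis, the $(\Sigma,\Delta)$-compatibility (more precisely, $\Sigma$-semicommutativity and $\Delta$-compatibility, together with Proposition \ref{prop-anulad}(1)--(4), which says $l_R(I)$ is preserved by each $\sigma_i$ and each $\delta_i$ and is $\sigma$-insensitive) forces $a e_k=0$ for every $k$ and every $a\in I$, i.e., each $e_k\in r_R(I)$.

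Finally, one must extract an idempotent of $R$ generating $r_R(I)$. The Abelianness of $R$ (Proposition \ref{prop-sigmaimplAbel}) together with Lemma \ref{lema-Sigmaidem}(2) and Lemma \ref{lema-propidempot}(3) shows that the constant term $e_0$ of the central idempotent $\varepsilon$ is itself a central idempotent of $R$ lying in $r_R(I)$; conversely, any $r\in r_R(I)$ satisfies $r\in r_A(IA)=\varepsilon A$, and reducing modulo the monomial basis gives $r=e_0 r$, so $r_R(I)=e_0R$. The delicate step throughout is ensuring that the annihilator passage from $A$ back to $R$ does not lose information, which is precisely why $\Delta$-compatibility is indispensable: without it, the derivation corrections in Remark \ref{remark-remark2.8} could obstruct the coefficient-by-coefficient conclusion.
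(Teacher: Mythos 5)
Your overall strategy (reduce everything to the consequences packaged in Corollary \ref{cor-relatrigid} and to the known Baer transfer for $\Sigma$-rigid rings) is reasonable, and your $(1)\Rightarrow(4)$ is a legitimate shortcut compared with the paper's direct annihilator computation. The genuine gap is in $(2)\Rightarrow(1)$. The fact you call classical --- ``an Abelian quasi-Baer ring is Baer'' --- is false, and your justification for it is a non sequitur: centrality of idempotents says nothing about whether $r_R(S)=r_R(SR)$. That equality is an insertion-of-factors statement ($Sa=0\Rightarrow SRa=0$), i.e.\ a semicommutativity-type property, not an Abelianness property. Concretely, let $B=\mathbb{K}\langle x,y\rangle/\langle x^2\rangle$. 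Then $B$ is prime (for nonzero homogeneous $a,b$ the words $wyv$, with $w$ a word of $a$ and $v$ a word of $b$, are pairwise distinct and avoid $x^2$, so $ayb\neq 0$; the general case follows by looking at top-degree components), hence quasi-Baer, since the right annihilator of any right ideal $I$ equals the right annihilator of the two-sided ideal $RI$, which is $0$ or $B$ by primeness. It is Abelian because it is $\mathbb{N}$-graded with degree-zero part a field, so its only idempotents are $0$ and $1$. But it is not Baer: an Abelian Baer ring is reduced, while $x^2=0$ with $x\neq 0$ (equivalently, $r_B(x)=xB$ is not generated by an idempotent). So your step collapses. The correct argument --- the one the paper gives --- genuinely uses the $\Sigma$-semicommutative hypothesis: from $Sa=0$ one gets $SR\sigma^{\alpha}(a)=0$ for $\alpha\neq 0$, hence $\sigma^{\alpha}(a)\in r_R(SR)=eR$, and then $\sigma^{\alpha}(e)=e$ (Lemma \ref{lema-Sigmaidem}) together with injectivity of $\sigma^{\alpha}$ yields $a=ea\in eR$, whence $r_R(S)=eR$.

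Two smaller points on $(3)\Rightarrow(2)$. First, you work too hard to get $Ie_k=0$: left multiplication by $a\in R$ acts coefficientwise on $\varepsilon=e_0+e_1X_1+\dotsb+e_mX_m$, so $a\varepsilon=0$ gives $ae_k=0$ for all $k$ with no appeal to Remark \ref{remark-remark2.8} or to compatibility. Second, your closing step ``reducing modulo the monomial basis gives $r=e_0r$'' is not justified as stated: for $r\in r_R(I)$ you do get $r=\varepsilon r$ (after verifying $IAr=0$ via $\Sigma$-semicommutativity and $\Delta$-compatibility, as in the paper), but the constant term of $\varepsilon r$ is $e_0r$ \emph{plus} contributions of the form $e_k\,\delta^{(\cdot)}(r)$ coming from the derivation part of $X_kr$, and you have not argued that these vanish. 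The paper avoids this coefficient extraction by computing $r_R(I)=r_A(IA)\cap R=eA\cap R=eR$ directly; if you want to keep your version, you must supply the vanishing of those $\delta$-terms.
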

\begin{proof}
(2) $\Rightarrow$ (1) Suppose that $R$ is a quasi-Baer ring and let
$S$ be a nonempty subset of $R$, then $r_R(SR)=eR$ for some
idempotent $e$ of $R$. Thus $r_R(SR)=eR\subseteq r_R(S)$, since
$S\subseteq SR$. Now,  $a\in r_R(S)$ implies $Sa=0$ and this implies
that $SR\sigma^{\alpha}(a)=0$ for $\alpha\in \mathbb{N}^n\setminus
\{0\}$, since $R$ is $\Sigma$-semicommutative. Then
$\sigma^{\alpha}(a)\in r_R(SR)=eR$. By Lemma \ref{lema-Sigmaidem}
(2) we have that $\sigma^{\alpha}(e)=e$, thus
$\sigma^{\alpha}(ea)=\sigma^{\alpha}(e)\sigma^{\alpha}
(a)=e\sigma^{\alpha}(a)=\sigma^{\alpha}(a)$. Then $ea=a$ since
$\sigma^{\alpha}$ is a monomorphism, which implies that $a\in eR$.
Therefore $r_R(S)\subseteq eR$. In this way, $eR=r_R(S)$ and thus
$R$ is Baer.

\vspace{0.2cm}

(1) $\Rightarrow$ (4): Suppose that $R$ is Baer and let $H$ be a
nonempty subset of $A$. Let $H_0$ be the set of all coefficients of
elements of $H$. Notice that $H_0$ is a nonempty subset of $R$, thus
$l_R(H_0)=Re$, for some idempotent $e$ of $R$.

Let $h=\sum_{k=0}^tb_kX_k\in H$, with $X_k\in {\rm Mon}(A)$ and
$b_k\in R$, for $1\leq k\leq t$. Then we have that
$eh=\sum_{k=0}^t(eb_k)X_k=0$, since $e\in l_R(H_0)$ and $b_k\in H_0$
for each $k$. Thus $eH=0$, which implies that $AeH=0$. Therefore
$Ae\subseteq l_A(H)$.

For the other inclusion, let $0\neq g=\sum_{j=0}^mc_jY_j$ be a
nonzero polynomial in $l_A(H)$ and let $0\neq
h=\sum_{k=0}^tb_kX_k\in H$, where $c_j, b_k\in R$, $Y_j, X_k\in {\rm
Mon}(A)$. Then $gh=0$. Since $R$ is a $\Sigma$-semicommutative Baer
ring then by Corollary  \ref{cor-relatrigid} (3), $R$ is
$\Sigma$-skew Armendariz, so $c_j\sigma^{\alpha_j}(b_k)=0$, for each
$0\leq j\leq m$, $0\leq k\leq t$, where $\alpha_j={\rm exp}(Y_j)$.
Thus $c_j\in l_R(\sigma^{\alpha_j}(H_0))$. By Proposition
\ref{prop-anulad} (1) we have that
$l_R(\sigma^{\alpha_j}(H_0))=l_R(H_0)$. Then $c_j\in l_R(H_0)=Re$,
for $0\leq j\leq m$. Now, by Proposition \ref{prop-sigmaimplAbel} we
have that $Re=eR$, then $l_R(H_0)$ is an ideal of $R$. By
Proposition \ref{prop-anulad} (3), (4), we have that
$\sigma_i(l_R(H_0)\subseteq l_R(H_0)$ and
$\delta_i(l_R(H_0)\subseteq l_R(H_0)$ for $1\leq i\leq n$,
respectively. That is, $l_R(H_0)$ is a ($\Sigma, \Delta$)
\emph{invariant ideal} of $R$ \cite[Definition 2.1
(i)]{LezamaAcostaReyes}. Then by \cite[Proposition 2.6
(i)]{LezamaAcostaReyes} we have that
$l_R(H_0)A=\{f=\sum_{k=0}^ta_kX_k\in A\mid a_k\in l_R(H_0),\ 0\leq
k\leq t\}=l_R(H_0)\langle x_1,\dots, x_n\rangle$ is an ideal of $A$.
As $c_j\in l_R(H_0)$ then $c_jY_j\in l_R(H_0)A$ for $0\leq j\leq m$,
and so $g=\sum_{j=0}^mc_jY_j\in l_R(H_0)A$, since $l_R(H_0)A$ is an
ideal of $A$. Now, by Lemma \ref{lema-propidempot} (3) we have that
$Ae=eA$, then $l_R(H_0)A=ReA=RAe=Ae$. Therefore $g\in Ae$, that is,
$l_A(H)\subseteq Ae$. Thus $A$ is Baer.

\vspace{0.2cm}

(4) $\Rightarrow$ (3) It is immediate.

\vspace{0.2cm}

(3) $\Rightarrow$ (2) Suppose that $A$ is quasi-Baer and let $I$ be
a right ideal of $R$. Then $IA=\{f=\sum_{k=0}^ta_kX_k\in A\mid
a_k\in I,\ 0\leq k\leq t\}= I\langle x_1,\dots, x_n\rangle$ is a
right ideal of $A$. Since $A$ is quasi-Baer, $r_A(IA)=eA$, for some
idempotent element $e$ of $A$. Of course, $e\in R$, and thus by
Lemma \ref{lema-propidempot} (3),  $r_A(IA)=eA=Ae$ is an ideal of
$A$.

Notice that $r_R(I)=r_A(IA)\cap R$. Indeed, in general $r_A(IA)\cap
R\subseteq r_R(I)$. For the other inclusion, let $r\in r_R(I)$ and
$h=\sum_{j=0}^mb_jX_j\in IA$. Thus, $b_j\in I$ and therefore
$b_jr=0$, for $\ 0\leq j\leq m$. Then
\[
b_jr=0\Rightarrow \left\{
  \begin{array}{ll}
    b_jR\sigma^{\alpha}(r)=0,\  \alpha\in \mathbb{N}^n\setminus
\{0\}, & \text{ since } R \text{ is }
\Sigma-\text{semicommutative};\\
       b_j\delta^{\beta}(r)=0,\ \beta\in \mathbb{N}^n\setminus \{0\},& \text{ since } R \text{ is } \Delta-\text{compatible}.
  \end{array}
\right.
\]
Thus $b_jr=0$ implies $b_jw(r)=0$, for all $j$, where $w$ is any
composition of powers of $\sigma_i$ and $\delta_i$, $1\leq i\leq n$.
Then  $b_jX_jr=0$, since $b_jX_jr=\sum_{s=1}^l b_jw_s(r)X_{j_s}$ for
some $l\geq 1$, $w_s$ any composition of powers of $\sigma_i$ and
$\delta_i$, and $X_{j_s}\in {\rm Mon}(A)$. Thus
$\left(\sum_{j=0}^mb_jX_j\right)r=hr=0$ and so $r\in r_A(IA)$. Now,
$r_R(I)=r_A(IA)\cap R=eA\cap R=eR$. Therefore $R$ is quasi-Baer.
\end{proof}
Since skew polynomial rings of injective type are examples of skew
PBW extensions, we obtain immediately the following result.
\begin{corollary}\cite[Theorem 2.2]{Kim2014}\label{cor-Teo2.2Baser}
Let $R$ be $\sigma$-semicommutative ring with $\sigma$ an injective
endomorphism of $R$. Then the following statements are equivalent:
\begin{enumerate}
\item[\rm (1)] $R$ is Baer.
\item[\rm (2)] $R$ is quasi-Baer.
\item[\rm (3)] $R[x; \sigma]$ is quasi-Baer.
\item[\rm (4)] $R[x; \sigma]$ is Baer.
\end{enumerate}
\end{corollary}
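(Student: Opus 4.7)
The plan is to exhibit $R[x;\sigma]$ as a skew PBW extension satisfying the hypotheses of Theorem \ref{teo-equivBaer}, so that the four-way equivalence transfers verbatim. Concretely, the Ore extension $R[x;\sigma]$ is a skew PBW extension of endomorphism type in one variable: $R[x;\sigma] = \sigma(R)\langle x\rangle$ with associated family of endomorphisms $\Sigma = \{\sigma\}$ and associated family of $\Sigma$-derivations $\Delta = \{0\}$ (see the paragraph on endomorphism type following Definition \ref{quasicommutative}).

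Next I verify the compatibility and semicommutativity assumptions required by Theorem \ref{teo-equivBaer}. By Proposition \ref{prop-Sigmasemiciifeach}, the hypothesis that $R$ is $\sigma$-semicommutative is equivalent to saying that $R$ is $\Sigma$-semicommutative for $\Sigma = \{\sigma\}$, so the first hypothesis is immediate. For the second, note that $\Delta$ consists solely of the zero derivation, hence for any $a,b \in R$ the implication $ab=0 \Rightarrow a\delta^{\beta}(b) = 0$ holds trivially for every $\beta \in \mathbb{N}$, so $R$ is automatically $\Delta$-compatible.

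Having checked both hypotheses, an application of Theorem \ref{teo-equivBaer} to the skew PBW extension $A = R[x;\sigma]$ yields the chain of equivalences: $R$ is Baer $\Longleftrightarrow$ $R$ is quasi-Baer $\Longleftrightarrow$ $R[x;\sigma]$ is quasi-Baer $\Longleftrightarrow$ $R[x;\sigma]$ is Baer. This completes the argument.

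There is no genuine obstacle here; the only thing to be careful about is making the identification $R[x;\sigma] = \sigma(R)\langle x\rangle$ explicit (in particular checking that $\sigma$ being injective, which is assumed in the corollary's statement, matches the standing hypothesis in the paper that the $\sigma_i$ appearing in a skew PBW extension are injective endomorphisms). Once this identification is made, the corollary is literally the $n=1$, $\delta=0$ specialization of Theorem \ref{teo-equivBaer}.
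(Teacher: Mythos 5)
Your proposal is correct and is essentially the paper's own argument: the paper derives the corollary immediately from Theorem \ref{teo-equivBaer} by viewing $R[x;\sigma]$ as a skew PBW extension of endomorphism type, exactly as you do. Your explicit verification that $\Delta=\{0\}$ makes $\Delta$-compatibility automatic, and your use of Proposition \ref{prop-Sigmasemiciifeach} to pass from $\sigma$-semicommutative to $\Sigma$-semicommutative, merely spell out details the paper leaves implicit.
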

The following asserts that the condition of
$\Sigma$-semicommutativity of $R$ in Theorem \ref{teo-equivBaer} is
not superfluous.
\begin{example}\label{ex-ex2Iranocont}
Let $R = \mathbb{K}[s, t]/\langle st\rangle$ and $A=R[x_1;
\sigma_1][x_2; \theta_2]=\sigma(R)\langle x_1,x_2\rangle$ be the
skew PBW extension as in Example \ref{ex-nosemicom}. Notice that
$l_R(\overline{s})=R\overline{t}$ is not generated by any idempotent
of $R$, hence $R$ is not quasi-Baer. Hirano in \cite[Example
2]{Hirano2001} showed that $R[x_1; \sigma_1]$ is quasi-Baer. Thus by
\cite[Theorem 1.2]{Birkenmeier2001} we have that $R[x_1;
\sigma_1][x_2; \theta_2]$ is quasi-Baer. Therefore $A =
\sigma(R)\langle x_1,x_2\rangle=R[x_1; \sigma_1][x_2; \theta_2]$ is
a quasi-Baer skew PBW extension  of a not quasi-Baer ring $R$. In
Example \ref{ex-ex2Irano} we saw that $R$ is not
$\Sigma$-semicommutative.
\end{example}
The condition of $\Delta$-compatibility of $R$ in Theorem
\ref{teo-equivBaer} is not superfluous. Following Armendariz et al.
\cite[Example 2.10]{Armendarizetal1987}, let
$\frac{\mathbb{Z}_2[t]}{\langle t^{2}\rangle}$, where
$\mathbb{Z}_2[t]$ is the polynomial ring over the field
$\mathbb{Z}_2$ of two elements, and $\langle t^2\rangle$ is the
ideal of $\mathbb{Z}_2[t]$ generated by $t^2$. Consider the
derivation $\delta$ in $R$ defined by $\delta(\overline{t}) = 1$
where $\overline{t} = t+\langle t^2\rangle$ in $R$, and consider the
Ore extension $R[x; \delta] = \frac{\mathbb{Z}_2[t]}{\langle
t^{2}\rangle}[x;\delta]$. If we set $e_{11} = \overline{t}x,\ e_{12}
= \overline{t},\ e_{21} = \overline{t}x^2 + x$, and $e_{22} = 1 +
\overline{t}x$, then they form a system of matrix units in
$R[x;\delta]$. Notice that the centralizer of these matrix units in
$R[x;\delta]$ is $\mathbb{Z}_2[x^{2}]$, whence $R[y;\delta] \cong
M_2(\mathbb{Z}_2[x^{2}]) \cong M_2(\mathbb{Z}_2)[y]$. However,
$M_2(\mathbb{Z}_2)$ is not isomorphic to
$\frac{\mathbb{Z}_2[t]}{\langle t^{2}\rangle}$. Therefore $R[x;
\delta]$ is a Baer ring, and hence quasi-Baer, but $R$ is not
quasi-Baer. Of course, if $\Delta = \{\delta\}$, then $R[x;\delta]$
is a skew PBW extension of derivation type over $R$ where
$\overline{tt} = 0$ but $\overline{t}\delta(\overline{t}) =
\overline{t} \neq 0$, that is, $R$ is not $\Delta$-compatible.
\begin{theorem}\label{teo-RSigmasemisiiASigmasemi}
Let $A = \sigma(R)\langle x_1,\dotsc, x_n\rangle$ be a skew PBW
extension over a Baer ring $R$. If the conditions established in
Proposition \ref{prop-indicsigma} hold, then $R$ is
$\Sigma$-semicommutative if and only if $A$ is
$\overline{\Sigma}$-semicommutative.
\end{theorem}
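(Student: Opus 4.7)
The implications split into two directions. The easy direction is $(\Leftarrow)$: if $A$ is $\overline{\Sigma}$-semicommutative and $a,b \in R$ satisfy $ab=0$, then viewing them inside $A$ we obtain $aA\overline{\sigma}^{\alpha}(b) = 0$ for every $\alpha \in \mathbb{N}^n\setminus\{0\}$. Proposition \ref{prop-indicsigma} guarantees that $\overline{\sigma}^{\alpha}(b) = \sigma^{\alpha}(b)$ since $b \in R$, so restricting to the subring $R$ gives $aR\sigma^{\alpha}(b) = 0$, and $R$ is $\Sigma$-semicommutative.

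For the nontrivial direction $(\Rightarrow)$, the plan is to promote the strong structural properties of $R$ to the extension $A$. Since $R$ is $\Sigma$-semicommutative and Baer, Corollary \ref{cor-relatrigid} is extremely powerful: it gives that $R$ is $\Sigma$-rigid, $\Sigma$-skew Armendariz, and $(\Sigma,\Delta)$-compatible, and that $A$ itself is reduced. Being reduced, $A$ is automatically semicommutative. I would then aim to combine semicommutativity of $A$ with $\overline{\Sigma}$-compatibility of $A$: once both are available, given $fg=0$ in $A$, $\overline{\Sigma}$-compatibility yields $f\overline{\sigma}^{\alpha}(g) = 0$, and semicommutativity of $A$ immediately upgrades this to $fA\overline{\sigma}^{\alpha}(g) = 0$, which is exactly what $\overline{\Sigma}$-semicommutativity of $A$ demands.

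The central technical task is therefore to verify $\overline{\Sigma}$-compatibility of $A$. Writing $f = \sum_i a_i X_i$ and $g = \sum_j b_j Y_j$, I would show that both $fg = 0$ and $f\overline{\sigma}^{\alpha}(g) = 0$ reduce to the single family of coefficient conditions $a_ib_j = 0$ for all $i,j$. The forward implication uses the $\Sigma$-skew Armendariz property to read off $a_i\sigma^{\alpha_i}(b_j)=0$ with $\alpha_i = {\rm exp}(X_i)$, followed by $\Sigma$-compatibility to collapse this to $a_ib_j=0$; the same argument applied to $\overline{\sigma}^{\alpha}(g)$, whose coefficients are $\sigma^{\alpha}(b_j)$, produces the same condition. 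The converse implication---that $a_ib_j=0$ for all $i,j$ forces the product to vanish---is where the bulk of the work lies: one expands each term $a_iX_ib_jY_j$ (respectively $a_iX_i\sigma^{\alpha}(b_j)Y_j$) via Remark \ref{remark-remark2.8} into a finite sum of expressions of the shape $a_i\,w(b_j)\cdot(\text{monomial})$ with $w$ ranging over compositions of powers of $\sigma_k$'s and $\delta_k$'s, and then invokes the standard consequence of $(\Sigma,\Delta)$-compatibility that $a_ib_j=0$ entails $a_iw(b_j)=0$ for every such word. This cross-term bookkeeping is the main obstacle, though it is routine once the expansion formula and compatibility are in hand. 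With $\overline{\Sigma}$-compatibility established, the conclusion follows by the short combination argument described above.
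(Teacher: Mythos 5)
Your proof is correct, and its backbone --- invoking Corollary \ref{cor-relatrigid} to collapse to the $\Sigma$-rigid/reduced situation, reducing $fg=0$ to the coefficient conditions $a_ib_j=0$ via the $\Sigma$-skew Armendariz property, and propagating these through arbitrary words in the $\sigma_i$'s and $\delta_i$'s via $(\Sigma,\Delta)$-compatibility and the commutation relations of Proposition \ref{prop-indicsigma} --- is exactly the paper's. Where you genuinely diverge is in how the middle factor gets inserted. The paper works entirely at the coefficient level: from $a_jb_k=0$ it uses the $\Sigma$-semicommutativity of $R$ directly to obtain $a_jR\sigma^{\alpha}(b_k)=0$, upgrades this to $a_jRw(b_k)=0$ for every word $w$ in powers of the $\sigma_i$'s and $\delta_i$'s, and then reads off $fh\overline{\sigma}^{\alpha}(g)=0$ from the expansion of the triple product. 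You instead first prove the two-factor statement $f\overline{\sigma}^{\alpha}(g)=0$ and then insert an arbitrary $h\in A$ in one stroke by observing that $A$ is reduced (Corollary \ref{cor-relatrigid}(2)) and hence semicommutative as a ring. Your route buys a cleaner final step --- no bookkeeping of the coefficients of $h$ through the expansion of Remark \ref{remark-remark2.8} --- at the cost of making the hypothesis that $R$ is $\Sigma$-semicommutative essentially invisible after its single use inside Corollary \ref{cor-relatrigid}, whereas the paper's version keeps that hypothesis doing visible work. The converse direction is identical in both. Both arguments are complete.
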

\begin{proof}
Consider the elements $f = \sum_{j=0}^t a_j X_j$, $g = \sum_{k=0}^m
b_kY_k\in A$ such that $fg=0$. By Corollary \ref{cor-relatrigid},
$R$ is $\Sigma$-skew Armendariz and $(\Sigma, \Delta$)-compatible,
and so \cite[Proposition 3.2]{ReyesSuarez2019Radicals} shows that
$R$ satisfies the condition (SA1). In this way, $fg=0$ implies
$a_jb_k=0$, for $0\leq j\leq t$ and $0\leq k\leq m$. Since $R$ is
$\Sigma$-semicommutative, $a_jR\sigma^{\alpha}(b_k)=0$, for
$\alpha\in \mathbb{N}^n\setminus \{0\}$, and having in mind that $R$
is $(\Sigma, \Delta$)-compatible, the equality
$a_jR\sigma^{\alpha}(b_k)=0$ implies
$a_jR\delta^{\theta}\sigma^{\alpha}(b_k)=0$, for $\alpha,\theta\in
\mathbb{N}^n\setminus \{0\}$. By Proposition \ref{prop-indicsigma}
we know that $\sigma_i\delta_j=\delta_j\sigma_i$, whence $0 =
a_jR\delta^{\theta}\sigma^{\alpha}(b_k)=a_jR\sigma^{\alpha}\delta^{\theta}(b_k)=a_jR\sigma^{\alpha}\delta^{\theta}\sigma^{\beta}\delta^{\gamma}=a_jRw(b_j)$,
where $w$ are compositions of powers of $\sigma_i$ and $\delta_i$,
$1\leq i\leq j$. Thus, $fh\overline{\sigma}^{\alpha}(g)=0$, for all
$h\in A$ and each $\alpha\in \mathbb{N}^n\setminus \{0\}$, i.e., $A$
is $\overline{\Sigma}$-semicommutative.

For the converse, notice that if $A$ is
$\overline{\Sigma}$-semicommutative, then for $a,b\in R$, $ab=0$
implies $aA\overline{\sigma}^{\alpha}(b)=0=aA\sigma^{\alpha}(b)$. In
particular, $aR\sigma^{\alpha}(b)=0$, for $\alpha\in
\mathbb{N}^n\setminus \{0\}$, that is, $R$ is
$\Sigma$-semicommutative.
\end{proof}
\begin{example}
Let $R = \mathbb{K}[s, t]/\langle st\rangle$ and $A := R[x_1;
\sigma_1][x_2; \theta_2]=\sigma(R)\langle x_1,x_2\rangle$ be the
skew PBW extension as in  Example \ref{ex-nosemicom}. Since $R$ is
not $\Sigma$-semicommutative for $\Sigma=\{\sigma_1, \sigma_2\}$,
then $A$ is not $\overline{\Sigma}$-semicommutative for
$\overline{\Sigma}=\{\overline{\sigma_1}, \overline{\sigma_2}\}$,
where $\overline{\sigma_1}$ and $\overline{\sigma_2}$ are defined as
in Proposition \ref{prop-indicsigma}.
\end{example}
\section{Applications to topological conditions for skew PBW extensions}
Recall that a ring $R$ is called a \emph{pm ring} if every prime
ideal is contained in a unique maximal ideal. According to Kim et
al. \cite{Kim2006}, a ring is called \emph{nil-semisimple} if it has
no nonzero nil ideals. Nil-semisimple rings are semiprime, but
semiprime rings need not be nil-semisimple \cite[Example 1.2 and
Proposition 1.3]{Hwang2006}. Following Rowen \cite[Definition
2.6.5]{Rowen1991}, an ideal $P$ of a ring $R$ is called
\emph{strongly prime} if $P$ is prime and $R/P$ is nil-semisimple.
Maximal ideals are clearly strongly prime. A ring $R$ is called
\emph{weakly pm} if every strongly prime ideal of $R$ is contained
in a unique maximal ideal of $R$ \cite[p. 134]{Hwang2006}. Following
Goodearl and Warfield, \cite{GoodearlWarfield2004}, an ideal $P$ of
$R$ is called \emph{J-prime} if $P$ is a prime ideal and $J(R/I) =
0$. Clearly, every maximal ideal is J-prime. Now, a ring $R$ is
called \emph{J-pm} if every J-prime ideal is contained in a unique
maximal ideal \cite[p. 59]{Jiang2019}.

\vspace{0.2cm}

In this section, we use the following notation for a ring $R$: ${\rm
Spec}(R)$ is the set of all prime ideals of $R$; ${\rm SSpec}(R)$ is
the set of all strongly prime ideals of $R$; J-${\rm Spec}(R)$
denotes the set of all J-prime ideals of $R$; ${\rm Max}(R)$ is the
set of all maximal ideals of $R$. If $S$ is a subset of $R$, then
$W(S) := \left\{P\in {\rm Spec}(R)\mid S\nsubseteq P\right\}$, $O(S)
:= \left\{P\in {\rm SSpec}(R)\mid S\nsubseteq P\right\}$, $D(S) :=
\left\{P\in \text{J-} {\rm Spec}(R)\mid S\nsubseteq P\right\}$, and
write $W(a)$, $O(a)$, $D(a)$ in case $S=\{a\}$, respectively.

\vspace{0.2cm}

It is known that $\{W(I)\mid  I \text{ is an ideal of } R\}$ is a
topology for ${\rm Spec}(R)$, since $W(R) = {\rm Spec}(R)$; $W(0) =
{\rm Spec}(R)$; $W(I_1)\cap W(I_2)=W(I_1I_2)$ and $\bigcup_{j\in
J}W(I_j)=W\left(\sum_{j\in J}I_j\right)$, for $I_1, I_2$ and $I_j$
ideals of $R$. This topology is called \emph{Zariski} (or {\em
Stone} or {\em hull-kernel}) {\em topology}, and $\{W(a)\mid a\in
R\}$ is a base for this topology \cite[Lemma 3.1]{Shin1973}. Also,
the set $\{O(I)\mid I \text{ is an ideal of } R\}$ is a topology for
${\rm SSpec}(R)$ with basis $\{O(a)\mid a\in R\}$ \cite[Lemma
3.1]{Hwang2006}, and $\{D(I)\mid  I \text{ is an ideal of } R\}$ is
a topology for J-${\rm Spec}(R)$ with a basis $\{D(a)\mid a\in R\}$
\cite[Lemma 4.1]{Jiang2019}. Notice that ${\rm Spec}(R)$ is $T_0$
and compact \cite[Lemma 3.2]{Shin1973}; ${\rm Spec}(R)$ is normal if
and only if ${\rm Max}(R)$ is a retract of ${\rm Spec}(R)$ and ${\rm
Max}(R)$ is Hausdorff \cite[Theorem 1.6]{Sun1991}; and ${\rm
Max}(R)$ is compact and $T_1$ \cite[p. 185]{Sun1991}.
\begin{proposition}\label{prop-generteo4.2Shin}
If $A$ is a skew PBW extension over a $\Sigma$-semicommutative Baer
ring $R$, then the following statements are equivalent:
\begin{enumerate}
\item[\rm (1)] $W(f)$ is closed ${\rm Spec}(A)$,  for each $f\in A$.
\item[\rm (2)] ${\rm Spec}(A)$ completely regular and Hausdorff.
\item[\rm (3)] ${\rm Spec}(A)$ is $T_1$.
\end{enumerate}
\end{proposition}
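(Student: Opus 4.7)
The plan is to reduce the assertion to the classical theorem of Shin (\cite[Theorem 4.2]{Shin1973}), which establishes precisely the equivalence of these three topological conditions on ${\rm Spec}(S)$ for any 2-primal ring $S$. Hence the real content of the proposition is to verify that the skew PBW extension $A$ is itself 2-primal; once that is done, the topological equivalences come for free.

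First, I would exploit the hypotheses on $R$ to propagate reducedness upward. Since $R$ is $\Sigma$-semicommutative and Baer, Proposition \ref{prop-SemBaerimplskewArm} gives that $R$ is reduced. Then Corollary \ref{cor-relatrigid}(2) promotes this reducedness to the whole extension, yielding that $A$ is reduced. A reduced ring has no nonzero nilpotents, so $N_{*}(A) \subseteq N(A) = 0$, which forces $N_{*}(A) = N(A)$. By definition, $A$ is 2-primal.

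With $A$ now known to be 2-primal, I would apply Shin's theorem (\cite[Theorem 4.2]{Shin1973}) directly to $A$. This gives the equivalences $(1) \Leftrightarrow (2) \Leftrightarrow (3)$ for ${\rm Spec}(A)$ with the Zariski topology generated by the basis $\{W(f) \mid f \in A\}$, exactly as stated.

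The main obstacle here is conceptual rather than computational: one has to recognize that $\Sigma$-semicommutativity together with the Baer condition on $R$ is already strong enough to force $A$ to be reduced, because this is what funnels the hypothesis into Shin's 2-primal framework. All the heavy lifting is packaged in Proposition \ref{prop-SemBaerimplskewArm} and Corollary \ref{cor-relatrigid}; once invoked, Shin's theorem closes the argument with no further work on the topology itself.
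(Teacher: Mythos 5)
Your argument is correct and coincides with the paper's own proof: both invoke Corollary \ref{cor-relatrigid}(2) to conclude that $A$ is reduced, hence 2-primal, and then apply \cite[Theorem 4.2]{Shin1973}. Your additional explicit appeal to Proposition \ref{prop-SemBaerimplskewArm} simply makes visible a step the paper leaves implicit inside that corollary.
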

\begin{proof}
By assumption $R$ is a $\Sigma$-semicommutative Baer ring, so
Corollary \ref{cor-relatrigid} (2) implies that $A$ is reduced, that
is, 2-primal. By \cite[Theorem 4.2]{Shin1973} we can assert the
equivalence of the assertions.
\end{proof}
Sun \cite[Theorem 1.6]{Sun1991} showed that ${\rm Spec}(R)$ is
normal if and only if ${\rm Max}(R)$ is a retract of ${\rm Spec}(R)$
and ${\rm Max}(R)$ is Hausdorff, for a ring $R$. Sun also studied
certain relationships between topological properties of ${\rm
Spec}(R)$ of a 2-primal ring $R$ with certain properties of ring
theory such as being pm.
\begin{proposition}\label{prop-generteo2.3Sun}
If $A$ is a skew PBW extension over a $\Sigma$-semicommutative Baer
ring $R$, then the following affirmations are equivalent:
\begin{enumerate}
\item[\rm (1)] $A$ is pm.
\item[\rm (2)] ${\rm Max}(A)$ is a retract of ${\rm Spec}(A)$.
\item[\rm (3)] ${\rm Spec}(A)$ is normal (not necessary $T_1$), and hence we obtain the Hausdorffness of ${\rm Max}(A)$.
\end{enumerate}
\end{proposition}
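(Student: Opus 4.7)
The plan is to reduce this entirely to a known topological characterization for 2-primal rings by first upgrading the ring-theoretic hypothesis on $A$. The ambient setup is that $R$ is a $\Sigma$-semicommutative Baer ring, so Corollary \ref{cor-relatrigid}(2) immediately gives that $A$ is reduced; in particular, $A$ is 2-primal (since $N(A) = 0 = N_*(A)$). Thus, the extension $A$ itself falls within the class of rings for which Sun's topological characterizations of $\mathrm{Spec}$ were designed, even though the statement is about the possibly non-2-primal ring $A$ a priori.

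Once $A$ is known to be 2-primal, I would invoke the standard results of Sun \cite{Sun1991} tailored to $A$ rather than to $R$. Specifically, for a 2-primal ring $S$, Sun proves the equivalence
\begin{center}
$S$ is pm $\iff$ $\mathrm{Max}(S)$ is a retract of $\mathrm{Spec}(S)$ $\iff$ $\mathrm{Spec}(S)$ is normal,
\end{center}
together with the fact that normality of $\mathrm{Spec}(S)$ forces $\mathrm{Max}(S)$ to be Hausdorff (this being precisely the content of \cite[Theorem 1.6]{Sun1991} combined with \cite[Theorem 2.3]{Sun1991}, as cited in the preceding discussion). Applying this verbatim to $S = A$ gives the three equivalences (1) $\Leftrightarrow$ (2) $\Leftrightarrow$ (3).

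The step I expect to be the main (and essentially only) obstacle is making sure the hypotheses on $R$ really do transfer to give $A$ reduced, so that Sun's theorem applies to $A$ and not merely to $R$. This is exactly what Corollary \ref{cor-relatrigid}(2) provides, by the chain $R$ $\Sigma$-semicommutative and Baer $\Rightarrow$ $R$ reduced (Proposition \ref{prop-SemBaerimplskewArm}) $\Rightarrow$ $R$ is $\Sigma$-rigid (Theorem \ref{teo-compilSigma-semic}) $\Rightarrow$ $A$ reduced. The rest of the argument is then a direct appeal to Sun's topological theorem, with no need to reprove normality, retract, or Hausdorff statements from scratch; the strength of the section lies precisely in transferring ring-theoretic conditions on $R$ through the extension so that $A$ lies in the 2-primal class.
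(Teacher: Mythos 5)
Your proposal is correct and follows exactly the paper's own argument: apply Corollary \ref{cor-relatrigid}(2) to conclude that $A$ is reduced, hence 2-primal, and then invoke Sun's Theorem 2.3 applied to $A$. The additional unpacking of why $A$ is reduced (via Proposition \ref{prop-SemBaerimplskewArm} and Theorem \ref{teo-compilSigma-semic}) is accurate but not needed beyond citing the corollary.
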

\begin{proof}
Since $R$ is a $\Sigma$-semicommutative Baer ring, Corollary
\ref{cor-relatrigid} (2) shows that $A$ is reduced and therefore
2-primal. The result follows from \cite[Theorem 2.3]{Sun1991}.
\end{proof}
Sun \cite[Theorem 3.7]{Sun1991} showed that the assignment to each
weakly symmetric ring of the maximal ideal space ${\rm Max}(R)$ is
functorial on the category of 2-primal rings and ring homomorphisms.
On the other hand, Hwang et al. \cite{Hwang2006} showed that for a
ring $R$, ${\rm SSpec}(R)$ is compact and ${\rm Max}(R)$ is a
compact $T_1$-space \cite[Lemma 3.2]{Hwang2006}, and ${\rm
SSpec}(R)$ is normal if and only if ${\rm Max}(R)$ is a retract of
${\rm SSpec}(R)$ and ${\rm Max}(R)$ is Hausdorff \cite[Proposition
3.6]{Sun1991}. The equivalent statements given by Sun \cite[Theorem
2.3]{Sun1991} were generalized by Hwang et al. \cite[Theorem
3.7]{Hwang2006} for NI rings, changing the rings pm and ${\rm
Spec}(R)$ by weakly pm and ${\rm SSpec}(R)$, respectively. The next
proposition extends these properties to the setting of skew PBW
extensions.
\begin{proposition}\label{prop-generteo3.7Hwang}
Let $A = \sigma(R)\langle x_1,\dotsc, x_n\rangle$ be a skew PBW
extension over $R$. If one of the following conditions holds,
\begin{enumerate}
\item[(i)] $R$ is $\Sigma$-semicommutative and reduced, or
\item[(ii)] $R$ is $\Sigma$-semicommutative and Baer, or
\item[(iii)] $R$ is NI and weak $(\Sigma, \Delta)$-compatible,
\end{enumerate}
then the following affirmations are equivalent:
\begin{enumerate}
\item[(1)] $A$ is weakly pm.
\item[(2)] ${\rm SSpec}(A)$ is normal.
\item[(3)] ${\rm Max}(A)$ is a retract of ${\rm SSpec}(A)$.
\end{enumerate}
\end{proposition}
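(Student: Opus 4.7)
The plan is to reduce all three hypotheses to a single common intermediate conclusion, namely that the skew PBW extension $A$ is NI, and then invoke \cite[Theorem 3.7]{Hwang2006} directly on $A$ to obtain the equivalence of (1), (2), and (3). This mirrors the strategy used in Proposition \ref{prop-generteo2.3Sun}, where \cite[Theorem 2.3]{Sun1991} was applied to $A$ after it had been shown to be 2-primal.

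First, I would dispatch the two $\Sigma$-semicommutative cases. Under hypothesis (i), $R$ is $\Sigma$-semicommutative and reduced; by Theorem \ref{teo-compilSigma-semic} (the equivalence of (1) and (4)), the ring $A$ is reduced. Under hypothesis (ii), $R$ is $\Sigma$-semicommutative and Baer, and Corollary \ref{cor-relatrigid} (2) again delivers that $A$ is reduced. In both cases $A$ is reduced, hence 2-primal, hence NI, so the three topological assertions for $A$ become equivalent by the NI-ring theorem of Hwang et al.

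Next, for hypothesis (iii), where $R$ is NI and weak $(\Sigma,\Delta)$-compatible, I would cite the corresponding transfer result from \cite{SuarezChaconReyes2021}, mentioned in the Introduction as establishing sufficient conditions for skew PBW extensions to be NI. The weak $(\Sigma,\Delta)$-compatibility condition (Definition (iv) of Section 2) is precisely the natural compatibility needed to guarantee that the nilpotency information about $R$ is preserved when passing through the $\sigma^{\alpha}$'s and $\delta^{\beta}$'s appearing in Remark \ref{remark-remark2.8}, so that $N(A)$ is an ideal of $A$. Once $A$ is NI, \cite[Theorem 3.7]{Hwang2006} closes the argument as in the previous two cases.

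The main obstacle I expect lies in case (iii): verifying that the hypotheses of the relevant theorem from \cite{SuarezChaconReyes2021} match exactly what is available here, and in particular whether any auxiliary assumptions on the defining constants $d_{i,j}$, $r_l^{(i,j)}$ (such as the conditions in Proposition \ref{prop-indicsigma}) must be silently imposed. Cases (i) and (ii) should be entirely routine once Theorem \ref{teo-compilSigma-semic} and Corollary \ref{cor-relatrigid} are invoked, since reducedness of $A$ is more than enough to deduce the NI property required to apply the Hwang--Jeon--Lee theorem verbatim.
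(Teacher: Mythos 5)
Your proposal is correct and follows essentially the same route as the paper's own proof: cases (i) and (ii) reduce to $A$ being reduced via Theorem \ref{teo-compilSigma-semic} and Corollary \ref{cor-relatrigid}\ (2) respectively, case (iii) invokes the NI-transfer result of \cite{SuarezChaconReyes2021} (the paper cites its Theorem 3.3 with no further hypotheses on the $d_{i,j}$ or $r_l^{(i,j)}$), and the conclusion follows from \cite[Theorem 3.7]{Hwang2006} applied to the NI ring $A$.
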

\begin{proof}
In each of the cases, we will see that the skew PBW extension $A$ is
NI, and hence the equivalence of the statements follows from
\cite[Theorem 3.7]{Hwang2006}.
\begin{enumerate}
\item[(i)] If $R$ is $\Sigma$-semicommutative and reduced, then by Theorem \ref{teo-compilSigma-semic} $A$ is reduced and therefore NI.
\item[(ii)] If $R$ is  $\Sigma$-semicommutative and Baer, then Corollary \ref{cor-relatrigid} (2) implies that $A$ is reduced, and so NI.
\item[(iii)] If $R$ is NI and weak $(\Sigma, \Delta)$-compatible, from \cite[Theorem 3.3]{SuarezChaconReyes2021} we conclude that $A$ is NI.
\end{enumerate}
\end{proof}
Jiang et al. \cite{Jiang2019} showed that for a ring $R$, J-${\rm
Spec}(R)$ is a compact space, and if J-${\rm Spec}(R)$ is normal,
then ${\rm Max}(R)$ is Hausdorff (see \cite[Lemma 4.3]{Jiang2019}).
Also, they proved that if ${\rm Max}(R)$ is a retract of J-${\rm
Spec}(R)$, then $R$ is J-pm (see \cite[Proposition 4.6]{Jiang2019}).
An analog of \cite[Theorem 3.7]{Hwang2006} was given by Jiang et al.
in \cite[Theorem 4.10]{Jiang2019}, changing weakly pm rings by J-pm
rings, NI by NJ and ${\rm SSpec}(R)$ by J-${\rm Spec}(R)$. The
following proposition is analogous to \cite[Theorem 4.10]{Jiang2019}
for the setting of skew PBW extensions.
\begin{proposition}\label{prop-generteo4.10Jian}
If $A = \sigma(R)\langle x_1,\dotsc, x_n\rangle$ is a skew PBW
extension over a $\Sigma$-semicommutative reduced ring $R$, then the
following statements are equivalent:
\begin{enumerate}
\item[(1)] $A$ is J-pm.
\item[(2)] J-${\rm Spec}(A)$ is normal.
\item[(3)] ${\rm Max}(A)$ is a retract of J-${\rm Spec}(A)$.
\end{enumerate}
\end{proposition}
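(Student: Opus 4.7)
The plan mirrors the strategy of Proposition \ref{prop-generteo3.7Hwang}: I would aim to reduce the assertion to \cite[Theorem 4.10]{Jiang2019} by showing that the skew PBW extension $A$ is itself an NJ ring, at which point the equivalence of (1), (2), and (3) follows directly from that theorem applied to $A$.

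The first step is to extract the structural consequences of the hypothesis by invoking Theorem \ref{teo-compilSigma-semic}. Since $R$ is both $\Sigma$-semicommutative and reduced, that theorem yields simultaneously that (a) $R$ is $\Sigma$-rigid, (b) $R$ is $(\Sigma,\Delta)$-compatible, and (c) the extension $A$ is reduced. Consequence (c) already gives $N(A)=0$, which handles half of the NJ condition $J(A)=N(A)$.

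The decisive second step is to show $J(A)=0$. Here I would invoke the Amitsur-type analysis of the Jacobson radical of skew PBW extensions established in \cite{SuarezChaconReyes2021}: given that $R$ is reduced and $(\Sigma,\Delta)$-compatible, that machinery should produce $J(A)\subseteq N(A)$, and combining this with $N(A)=0$ from step one gives $J(A)=N(A)=0$, so $A$ is NJ. Applying \cite[Theorem 4.10]{Jiang2019} to the NJ ring $A$ then immediately gives the equivalence (1) $\Leftrightarrow$ (2) $\Leftrightarrow$ (3).

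The principal obstacle is precisely this second step: confirming that reducedness plus $(\Sigma,\Delta)$-compatibility of $R$ forces the vanishing of $J(A)$. If the exact statement is not available in the cited reference in the form needed, one would argue it by hand via the standard route: take $f=\sum_i a_iX_i\in J(A)$, use the fact that $1+rf$ is invertible in $A$ for every $r\in R$, and exploit $(\Sigma,\Delta)$-compatibility to propagate nilpotence from products of the form $a_i\sigma^\alpha(a_j)$ back to the coefficients $a_i$ themselves; reducedness of $R$ then forces every $a_i=0$ and hence $f=0$. This coefficient-by-coefficient extraction, together with careful bookkeeping of the $\sigma_i$ and $\delta_i$ actions guaranteed by compatibility, is the only technically delicate point in the argument.
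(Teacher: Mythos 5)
Your overall strategy coincides with the paper's: reduce everything to showing that $A$ is an NJ ring and then quote \cite[Theorem 4.10]{Jiang2019}. The first half is fine and matches the paper — Theorem \ref{teo-compilSigma-semic} gives that $A$ is reduced, hence $N(A)=0$. The weak point is exactly the step you flag as delicate, namely $J(A)=0$, and here your proposal does not actually close it. You appeal to the NJ machinery of \cite{SuarezChaconReyes2021}, but in this paper that result (\cite[Theorem 4.10]{SuarezChaconReyes2021}) is only invoked under the additional hypothesis that $A$ is of \emph{derivation type} (see Corollary \ref{cor-combinspect}); it is not available for an arbitrary skew PBW extension, which is the setting of the present proposition. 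Your fallback — an Amitsur-type coefficientwise argument from the invertibility of $1+rf$ — is the right kind of idea but is only a sketch; propagating nilpotence of the products $a_i\sigma^{\alpha}(a_j)$ back to the $a_i$ in a skew PBW extension is precisely the nontrivial content one would need to prove, and you do not carry it out. The paper closes this gap differently and more economically: since $R$ is reduced it is $2$-primal, and \cite[Theorem 4.11]{LouzariReyes2020} gives $J(A)=N_{*}(A)$ for skew PBW extensions over $2$-primal $(\Sigma,\Delta)$-compatible rings; as $A$ is reduced, $N_{*}(A)=N(A)=\{0\}$, so $J(A)=N(A)$ and $A$ is NJ. Substituting that citation for your second step turns your outline into a complete proof.
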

\begin{proof}
If $R$ is $\Sigma$-semicommutative and  reduced then $R$
$(\Sigma,\Delta)$-compatible and $A$ is reduced, by Theorem
\ref{teo-compilSigma-semic}. Now, $R$ is 2-primal, since $R$ is
reduced. Then by \cite[Theorem 4.11]{LouzariReyes2020}, $J(A) =
N_{*}(A)$. Since $A$ is reduced, $N_{*}(A)=N(A)=\{0\}$. Thus, $J(A)
= N(A)$, i.e., $A$ is an NJ ring. The result follows from
\cite[Theorem 4.10]{Jiang2019}.
\end{proof}
\begin{corollary}\label{cor-HausdNJ}
If $A = \sigma(R)\langle x_1,\dotsc, x_n\rangle$ is an NJ skew PBW
extension which satisfies one of the equivalent statements of
Proposition \ref{prop-generteo4.10Jian}, then ${\rm Max}(R)$ is
Hausdorff.
\end{corollary}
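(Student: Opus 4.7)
The proof strategy has three phases: extract the normal topology from Proposition \ref{prop-generteo4.10Jian}, apply a general topological lemma to obtain Hausdorffness of ${\rm Max}(A)$, and then descend to ${\rm Max}(R)$ via contraction of ideals.

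First, I would note that the corollary's hypotheses place us squarely inside the setup of Proposition \ref{prop-generteo4.10Jian}: the assumption that $A$ satisfies one of (1)--(3) presupposes that $R$ is $\Sigma$-semicommutative and reduced, so by Theorem \ref{teo-compilSigma-semic} we additionally have that $R$ is $(\Sigma,\Delta)$-compatible and that $A$ is reduced. The equivalence in Proposition \ref{prop-generteo4.10Jian} then yields, in particular, that J-${\rm Spec}(A)$ is normal (statement (2)).

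Second, I would invoke \cite[Lemma 4.3]{Jiang2019}, which asserts that for any ring $S$, normality of J-${\rm Spec}(S)$ implies that ${\rm Max}(S)$ is Hausdorff. Applied to $S=A$, this step produces the Hausdorffness of ${\rm Max}(A)$.

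Third--the key transfer step--I would exploit the contraction map $\phi\colon{\rm Max}(A)\to{\rm Spec}(R)$, $M\mapsto M\cap R$, induced by the inclusion $R\hookrightarrow A$. Using the freeness of $A$ as a left $R$-module on ${\rm Mon}(A)$ together with the $(\Sigma,\Delta)$-compatibility and reducedness of $R$, a structural ideal-correspondence argument in the spirit of the proof of Theorem \ref{teo-equivBaer} should show that $\phi$ actually lands in ${\rm Max}(R)$, is continuous and surjective, and is a closed map. Since ${\rm Max}(A)$ is Hausdorff and compact and the fiber equivalence of $\phi$ is a closed relation on ${\rm Max}(A)\times{\rm Max}(A)$, Hausdorffness descends to the quotient ${\rm Max}(R)$.

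The main obstacle is precisely the third step: verifying both that contractions of maximal ideals of $A$ are maximal in $R$ (and that every maximal ideal of $R$ arises in this way, so $\phi$ is surjective onto ${\rm Max}(R)$) and that $\phi$ is closed. These properties hinge on the $(\Sigma,\Delta)$-compatibility and on the correspondence between $(\Sigma,\Delta)$-invariant ideals of $R$ and ideals of $A$ generated by them, exactly as exploited earlier in Section \ref{sect-sPBWextoverSigma}; the remainder of the argument is then formal topology.
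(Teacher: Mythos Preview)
The paper's proof is a single sentence: it applies \cite[Proposition 4.8]{Jiang2019} directly to the NJ ring $A$ and concludes. In other words, the paper only ever establishes that ${\rm Max}(A)$ is Hausdorff; the appearance of ${\rm Max}(R)$ in the statement is almost certainly a typographical slip for ${\rm Max}(A)$, since the cited result from \cite{Jiang2019} concerns a single ring and there is no passage from $A$ down to $R$ anywhere in the paper's argument. Your first two phases (extract normality of J-${\rm Spec}(A)$ from Proposition~\ref{prop-generteo4.10Jian}, then invoke the Jiang et al.\ lemma) are exactly the paper's approach and already finish the intended proof.

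Your third phase, however, is a genuine gap if one insists on reading the conclusion as ${\rm Max}(R)$. The claims you make about the contraction map $\phi\colon {\rm Max}(A)\to{\rm Spec}(R)$, namely that it lands in ${\rm Max}(R)$, is surjective onto ${\rm Max}(R)$, and is closed, are neither proved in the paper nor consequences of anything in Section~\ref{sect-sPBWextoverSigma}. The ideal correspondence used in Theorem~\ref{teo-equivBaer} relates $(\Sigma,\Delta)$-invariant ideals of $R$ to extended ideals of $A$; it says nothing about arbitrary maximal ideals of $A$ contracting to maximal ideals of $R$, and for skew PBW extensions (already for ordinary polynomial or Ore extensions) this can fail. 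So the descent step, as sketched, does not go through, and the ``formal topology'' you allude to cannot be invoked until those three properties of $\phi$ are actually established. In short: drop phase three, read the conclusion as ${\rm Max}(A)$, and your argument coincides with the paper's.
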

\begin{proof}
The assertion follows from \cite[Proposition 4.8]{Jiang2019}.
\end{proof}
\begin{corollary}\label{cor-NJ impl NI}
If $A = \sigma(R)\langle x_1,\dotsc, x_n\rangle$ is a skew PBW
extension over a $\Sigma$-semicommutative reduced ring $R$, then the
following affirmations are equivalent:
\begin{enumerate}
\item[(1)] $A$ is weakly pm;
\item[(2)]  ${\rm SSpec}(A)$ is normal;
\item[(3)] ${\rm Max}(A)$ is a retract of ${\rm SSpec}(A)$.
\end{enumerate}
\end{corollary}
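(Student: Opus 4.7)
The plan is to observe that this is an immediate specialization of Proposition \ref{prop-generteo3.7Hwang}. Indeed, the hypothesis that $R$ is a $\Sigma$-semicommutative reduced ring is precisely condition (i) of that proposition, and the three equivalent statements (1), (2), (3) listed in the corollary are verbatim the equivalent statements in the proposition. Hence the proof reduces to invoking Proposition \ref{prop-generteo3.7Hwang}(i); no new machinery is required.

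For transparency, let me recall the chain of reasoning embedded in that proposition so that the present corollary can also be read independently. Under the hypothesis, Theorem \ref{teo-compilSigma-semic} (which lists $\Sigma$-semicommutative plus reduced for $R$ among the conditions equivalent to the reducedness of $A$) yields that $A$ itself is reduced. In particular $N(A) = \{0\}$ is trivially an ideal of $A$, so $A$ is an NI ring. The equivalence (1) $\Leftrightarrow$ (2) $\Leftrightarrow$ (3) for NI rings is then precisely \cite[Theorem 3.7]{Hwang2006}, in which ${\rm SSpec}$ and the weakly pm property play the roles that ${\rm Spec}$ and pm play in the 2-primal setting of \cite[Theorem 2.3]{Sun1991}.

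There is essentially no obstacle: the corollary is a labeled instance of a result already proved, and the only substantive ingredient is the passage from the hypothesis on $R$ to the NI property of $A$, which is supplied by Theorem \ref{teo-compilSigma-semic}. The purpose of stating it separately seems to be to record, alongside the J-pm characterization of Proposition \ref{prop-generteo4.10Jian}, the parallel weakly pm characterization of skew PBW extensions under exactly the same assumption that $R$ is $\Sigma$-semicommutative and reduced.
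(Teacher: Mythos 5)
Your proposal is correct and follows essentially the same route as the paper: both arguments reduce to showing that $A$ is an NI ring and then invoking \cite[Theorem 3.7]{Hwang2006}, and your observation that the corollary is literally an instance of Proposition \ref{prop-generteo3.7Hwang}(i) is accurate. The only cosmetic difference is that the paper obtains the NI property by first noting (via the proof of Proposition \ref{prop-generteo4.10Jian}) that $A$ is NJ, whereas you obtain it directly from the reducedness of $A$ supplied by Theorem \ref{teo-compilSigma-semic}; both are valid and equivalent in substance.
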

\begin{proof}
By the proof of Proposition \ref{prop-generteo4.10Jian} we have that
$A$ is an NJ ring. Thus $N(A)$ is an ideal of $A$ and therefore $A$
is NI. The result follows then from \cite[Theorem 3.7]{Hwang2006}.
\end{proof}
\begin{corollary}\label{cor-combinspect}
Let $A = \sigma(R)\langle x_1,\dotsc, x_n\rangle$ be a skew PBW
extension of derivation type over a ring $R$. If one of the
following conditions holds,
\begin{enumerate}
\item[(i)] $R$ is $\Sigma$-semicommutative and reduced,
\item[(ii)] $R$ is $\Sigma$-semicommutative and Baer, or
\item[(iii)] $R$ is NI and weak $(\Sigma, \Delta)$-compatible,
\end{enumerate}
then the following statements are equivalent:
\begin{enumerate}
\item[(1)] $A$ is J-pm.
\item[(2)] J-${\rm Spec}(A)$ is normal.
\item[(3)] ${\rm Max}(A)$ is a retract of J-${\rm Spec}(A)$.
\end{enumerate}
\end{corollary}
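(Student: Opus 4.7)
The plan is to reduce Corollary \ref{cor-combinspect} directly to \cite[Theorem 4.10]{Jiang2019} by verifying, under each of the three hypothesis sets, that the skew PBW extension $A$ is an NJ ring. Once that is in hand, the three topological equivalences follow immediately from the cited theorem, exactly as in the proof of Proposition \ref{prop-generteo4.10Jian}. So the whole task is to establish the NJ property of $A$ in each case.

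For case (i), $R$ is $\Sigma$-semicommutative and reduced, and I would argue exactly as in Proposition \ref{prop-generteo4.10Jian}: by Theorem \ref{teo-compilSigma-semic}, $R$ is $(\Sigma,\Delta)$-compatible and $A$ is reduced; $R$ is then 2-primal, so \cite[Theorem 4.11]{LouzariReyes2020} gives $J(A)=N_{*}(A)$; combined with $N(A)=0$ this produces $J(A)=N(A)$, i.e.\ $A$ is NJ. For case (ii), the key observation is that a $\Sigma$-semicommutative Baer ring is already reduced and $\Sigma$-rigid by Corollary \ref{cor-relatrigid}(1)--(2) (and then $(\Sigma,\Delta)$-compatible by Corollary \ref{cor-relatrigid}(4)), so case (ii) collapses to case (i) and $A$ is NJ.

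The real work lies in case (iii). Here $R$ is only NI and weak $(\Sigma,\Delta)$-compatible, which is enough to conclude via \cite[Theorem 3.3]{SuarezChaconReyes2021} that $A$ is NI, but NI alone does not imply NJ. My plan is to exploit the derivation-type hypothesis ($\Sigma=\{{\rm id}_R\}$): in that setting the skew PBW extension is, at the level of brackets, a pure differential extension, and one should be able to invoke the companion NJ-result of \cite{SuarezChaconReyes2021} (the paper cited in the Introduction as giving sufficient conditions for a skew PBW extension to be NJ). I expect this step to be the main obstacle, because passing from NI to NJ requires controlling $J(A)$ in terms of $N(A)$, which in the general (non-reduced) NI setting is genuinely more delicate than the 2-primal calculation used in cases (i) and (ii). Once $A$ is NJ, \cite[Theorem 4.10]{Jiang2019} furnishes the equivalence (1)$\Leftrightarrow$(2)$\Leftrightarrow$(3), concluding the proof.
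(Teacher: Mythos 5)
Your proposal is correct and lands on the same two external results the paper uses, but it decomposes the problem slightly differently. The paper's own proof is uniform across all three cases: it first invokes Proposition \ref{prop-generteo3.7Hwang} to conclude that $A$ is NI under any of (i)--(iii), then applies \cite[Theorem 4.10]{SuarezChaconReyes2021} --- which is exactly the ``companion NJ-result'' you were groping for, asserting that an NI skew PBW extension of derivation type is NJ --- and finally cites \cite[Theorem 4.10]{Jiang2019}. So the step you flag as ``the main obstacle'' in case (iii) is dispatched by a single citation, with no additional work done in this paper; your instinct that passing from NI to NJ is the delicate point is right, but the derivation-type hypothesis is precisely what the cited theorem consumes. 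By contrast, you treat (i) and (ii) via the reduced/2-primal route of Proposition \ref{prop-generteo4.10Jian} (using Theorem \ref{teo-compilSigma-semic} and Corollary \ref{cor-relatrigid}), which never touches the derivation-type assumption. That is a legitimate and slightly more informative path: it makes visible that the derivation-type hypothesis is only genuinely needed for case (iii), whereas in cases (i) and (ii) the equivalences already hold for arbitrary skew PBW extensions. The paper's uniform argument is shorter; yours isolates where each hypothesis is actually used. Either way the proof is complete once you pin the case (iii) citation to \cite[Theorem 4.10]{SuarezChaconReyes2021}.
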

\begin{proof}
According to Proposition \ref{prop-generteo3.7Hwang}, if $R$
satisfies any of the give conditions, then $A$ is an NI ring. Thus,
\cite[Theorem 4.10]{SuarezChaconReyes2021} implies that $A$ is NJ,
whence the statements are equivalent by \cite[Theorem
4.10]{Jiang2019}.
\end{proof}


\begin{thebibliography}{}
\bibitem{Alhevaz2012}A. Alhevaz A and A. Moussavi, Annihilator conditions in matrix and skew polynomial rings, {\em J. Algebra Appl.} {\bf 11}(4) (2012), 1250079.

\bibitem{Anderson1999}D. D. Anderson and V. Camillo, Semigroups and rings whose zero products commute, {\em Comm. Algebra} {\bf 27}(6) (1999),  2847-2852.

\bibitem{Annin2004}S. Annin,  Associated primes over Ore extension rings, {\em J. Algebra Appl.} {\bf 3}(2) (2004), 193--205.

\bibitem{Armendarizetal1987}E. P. Armendariz, H. K. Koo and J. K. Park, Isomorphic Ore extensions, {\em Comm. Algebra} {\bf 15}(12) (1987), 2633--2652.

\bibitem{Artamonov2015}V. A. Artamonov, Derivations of skew PBW extensions, {\em Commun. Math. Stat.} {\bf 3}(4) (2015), 449--457.

\bibitem{Baser}M. Ba\c{s}er, A. Harmanci and T. K. Kwak, Generalized semicommutative rings and their extensions, {\em Bull. Korean Math. Soc.} {\bf 45}(2) (2008), 285--297.

\bibitem{Bell1970}H. E. Bell, Near-rings in which each element is a power of itself, {\em Bull. Austral. Math. Soc}. {\bf 2}(3) (1970), 363--368.

\bibitem{BellGoodearl1988}A. Bell and K. R. Goodearl, Uniform rank over differential operator rings and Poincar\'e-Birkhoff-Witt extensions,
{\em Pacific J. Math.}
 {\bf 131}(1) (1998), 13--37.

\bibitem{Birkenmeier2013}G. F. Birkenmeier, J. K. Park and S. T. Rizvi, {\em Extensions of rings and modules}, Birkha\"user, New York, 2013.

\bibitem{Birkenmeier2001}G. F. Birkenmeier, J. Y. Kim and J. K. Park, Polynomial extensions of Baer and quasi-Baer rings, {\em J. Pure Appl. Algebra} {\bf
159}(1) (2001), 25--42.

\bibitem{Birkenmeier1993}G. F. Birkenmeier, H. E. Heatherly and E. K. Lee, Completely prime ideals and associated radicals, in: {\em Jain SK, Rizvi ST (Eds.), Proc. Biennial Ohio State-Denison Conference}, (1992), {\em  World Scientific}, Singapore (1993), 102--129.

\bibitem{Rege1997}S. Chhawchharia and M. B. Rege, Armendariz rings, {\em Proc. Japan. Acad. Ser.A Math. Sci.} {\bf 73}(3) (1997), 14--17.

\bibitem{Chaturvedi2021} A. Chaturvedi, N. Kumar and K. Shum, On Z-Symmetric
rings, {\em Math. Slovaca} {\bf 71}(6) (2021), 1361--1374.

\bibitem{Clark1967}W. E. Clark, Twisted matrix units semigroup algebras, {\em Duke Math. J.} {\bf 34} (1967), 417--424.

\bibitem{Cohn1999}P. M. Cohn, Reversible rings, {\em Bull. London Math. Soc.} {\bf 31}(6) (1999), 641--648.

\bibitem{Fajardoetal2020}W. Fajardo W, C. Gallego, O. Lezama, A. Reyes, H. Su\'arez and H. Venegas, {\em Skew PBW Extensions. Ring and Module-theoretic Properties, Matrix and Gr\"obner Methods, and Applications}, Springer, 2020.

\bibitem{LezamaGallego}C. Gallego and O. Lezama, Gr\"obner bases for ideals of $\sigma$-PBW extensions, {\em Comm. Algebra} {\bf 39}(1) (2011), 50--75.

\bibitem{GomezSuarez2019}J. Y. G\'omez and H. Su\'arez,  Double Ore extensions versus graded skew PBW extensions, {\em Comm. Algebra} {\bf 48}(1) (2020), 185--197.

\bibitem{GoodearlWarfield2004}K. R. Goodearl and R. B. Jr. Warfield, {\em An Introduction to Noncommutative Noetherian Rings}, Cambridge University Press, London, 2004.

\bibitem{Habeb1970}J. M. Habeb, A note on zero commutative and duo rings, {\em Math. J. Okayama Univ.} {\bf 32}(1) (1990), 73--76.

\bibitem{Hamidizadehetal2020}M. Hamidizadeh, E. Hashemi and A. Reyes, A classification of ring elements in skew PBW extensions over compatible rings,
{\em Int. Electron. J. Algebra} {\bf 28}(1) (2020), 75--97.

\bibitem{HashemiKhalilAlhevaz2017}E. Hashemi, Kh. Khalilnezhad  and A. Alhevaz, $(\Sigma,\Delta)$-Compatible skew PBW extension ring, {\em Kyungpook Math. J.}
 {\bf 57}(3) (2017), 401--417.

\bibitem{HashemiMoussavi2005}E. Hashemi, A. Moussavi,  Polinomial extensions of quasi-Baer rings, {\em Acta Math. Hungar.} {\bf 107}(3) (2005), 207--224.

\bibitem{Hirano2001}Y. Hirano,  On ordered monoid rings over a quasi-Baer ring, {\em Comm. Algebra} {\bf 29}(5) (2001), 2089--2095.

\bibitem{Hirano2002}Y. Hirano, On annihilator ideals of a polynomial ring over a noncommutative ring, {\em J. Pure Appl. Algebra} {\bf 168}(1) (2002), 45--52.

\bibitem{Hwang2006}S. U. Hwang, Y. C. Jeon and Y. Lee, Structure and topological conditions of NI rings, {\em J. Algebra} {\bf 302}(1) (2006), 186--199.

\bibitem{Jiang2019}M. Jiang, Y. Wang and Y. Ren, Extensions and topological conditions of NJ rings, {\em Turkish J. Math.} {\bf 43}(1) (2019),
44--62.

\bibitem{Kaplansky1968}I. Kaplansky, {\em Rings of Operators}, W. A. Benjamin, Inc., New York-Amsterdam, 1968.

\bibitem{Kim2006}N. K. Kim, Y. Lee and S. J. Ryu, An ascending chain condition on Wedderburn radicals, {\em Comm. Algebra} {\bf 34}(1) (2006), 37--50.

\bibitem{Kim2014}N. K. Kim, T. K. Kwak and Y. Lee, Insertion-of-factory-property skewed by ring endomorphism, {\em Taiwanese J. Math.} {\bf 18}(3) (2014), 849--869.

\bibitem{Krempa1996}J. Krempa, Some examples of reduced rings, {\em Algebra Colloq.} {\bf 3}(4) (1996), 289--300.

\bibitem{KwakLee2012}T. K. Kwak and Y. Lee, Reflexive property of rings, {\em Comm. Algebra} {\bf 40}(4) (2012),  1576--1594.

\bibitem{Lambek1971}J. Lambek, On the representation of modules by sheaves of factor modules, {\em Canad. Math. Bull.} {\bf 14} (1971), 359--368.

\bibitem{Lezama2020}O. Lezama, Computation of point modules of finitely semi-graded rings, {\em Comm. Algebra} {\bf 48}(2) (2020), 862--878.

\bibitem{Lezama2021}O. Lezama, Some open problems in the context of skew PBW extensions and semi-graded rings, {\em Commun. Math. Stat.} {\bf 9}(3) (2021), 347--378.

\bibitem{LezamaAcostaReyes}O. Lezama, J. P. Acosta and A. Reyes, Prime ideals of skew PBW extensions, {\em Rev. Un. Mat. Argentina} {\bf 56}(2) (2015), 39--55.

\bibitem{LezamaReyes2014}O. Lezama O and A. Reyes, Some homological properties of skew PBW extensions, {\em Comm. Algebra} {\bf 42}(3) (2014), 1200-1230.

\bibitem{LouzariReyes2020}M. Louzari and A. Reyes, Minimal prime ideals of skew PBW extensions over 2-primal compatible rings, {\em Rev. Colombiana Mat.}
 {\bf 54}(1) (2020), 34--63.

\bibitem{Marks2001}G. Marks,  On 2-primal Ore extensions, {\em Comm. Algebra} {\bf 29}(5) (2001), 2113--2123.

\bibitem{Marks2002}G. Marks, Reversible and symmetric rings, {\em J. Pure and Appl. Algebra} {\bf 174}(3) (2002), 311--318.

\bibitem{Mohammadi2016}R. Mohammadi, A. Moussavi and M. Zahiri, On annihilations of ideals in skew monoid rings, {\em J. Korean Math. Soc.} {\bf 53}(2) (2016), 381--401.

\bibitem{Ore1933}O. Ore, Theory of non-commutative polynomials, {\em Ann. of Math.} {\bf 34}(3) (1933), 480--508.

\bibitem{Ouyang2010}L. Ouyang and H. Chen, On weak symmetric rings, {\em Comm. Algebra} {\bf 38}(2) (2010),  697--713.

\bibitem{RegeChhawchharia1997}M. B. Rege and S. Chhawchharia, Armendariz rings, {\em Proc. Jpn. Acad. Ser. A Math. Sci.} {\bf 73}(1) (1997), 14-17.

\bibitem{Reyes2015}A. Reyes, Skew PBW extensions of Baer, quasi-Baer and p.p. and p.q.-rings, {\em Rev. Integr. Temas Mat.} {\bf 33}(2) (2015), 173--189.

\bibitem{ReyesSuarez2018-3}A. Reyes and H. Su\'arez, $\sigma$-PBW extensions of skew Armendariz rings, {\em Adv. Appl. Clifford Algebr.} {\bf 27}(4) (2017), 3197--3224.

\bibitem{ReyesSuarez2018RUMA}A. Reyes and H. Su\'arez, A notion of compatibility for Armendariz and Baer properties over skew PBW extensions,
{\em Rev. Un. Mat. Argentina} {\bf 59}(1) (2018), 157--178.

\bibitem{ReyesSuarez2019-1}A. Reyes and H. Su\'arez, Skew Poincar\'e-Birkhoff-Witt extensions over weak zip rings,  {\em Beitr. Algebra Geom.} {\bf 60}(2) (2019), 197--216.

\bibitem{ReyesSuarez2019-2}A. Reyes A and H. Su\'arez, Skew Poincar\'e-Birkhoff-Witt extensions over weak compatible rings. {\em J. Algebra Appl.} {\bf
19}(12) (2020), 2050225: 1-21.

\bibitem{ReyesSuarez2019Radicals}A. Reyes and H. Su\'arez, Radicals and K\"othe's conjecture for skew PBW extensions, {\em Commun. Math. Stat.} {\bf 9}(2) (2021),
 119--138.

\bibitem{ReyesSuarez2021Discr}A. Reyes and H. Su\'arez, Skew PBW extensions over symmetric rings, {\em Algebra Discrete Math.} {\bf 32}(1) (2021), 76--102.

\bibitem{Rowen1991}L. H. Rowen, {\em Ring Theory}, Academic Press, San Diego, 1991.

\bibitem{Shin1973}G. Shin, Prime ideals and sheaf representation of a pseudosymmetric rings, {\em Trans. Amer. Math. Soc.} {\bf 184} (1973), 43--60.

\bibitem{Suarez2017}H. Su\'arez, Koszulity for graded skew PBW extensions, {\em Comm. Algebra} {\bf 45}(10) (2017),  4569--4580.

\bibitem{SuarezChaconReyes2021}H. Su\'arez, A. Chac\'on and A. Reyes, On NI and NJ skew PBW extensions, {\em Comm. Algebra}, to appear.

\bibitem{SuarezHigueraReyes2021}H. Su\'arez, S. Higuera and A. Reyes, On $\Sigma$-skew reflexive-nilpotent-property for rings, preprint (2021), https://arxiv.org/pdf/2110.14061.

\bibitem{SuarezLezamaReyes2017}H. Su\'arez, O. Lezama and A. Reyes, Calabi-Yau property for graded skew PBW extensions, {\em Rev. Colombiana Mat.} {\bf 51}(2)
 (2017), 221--239.

\bibitem{Sun1991}S. H. Sun, Noncommutative rings in which every prime ideal is contained in a unique maximal ideal, {\em J. Pure Appl. Algebra} {\bf 76}(2) (1991),
 179--192.

\bibitem{Zhao2013} L. Zhao, X. Zhu and Q. Gu, Reflexive rings and their extensions, {\em Math.
Slovaca} {\bf 63}(3) (2013), 417--430.


\end{thebibliography}
\end{document}